\numberwithin{equation}{section}
\numberwithin{figure}{section}
\theoremstyle{plain}
\newtheorem{thm}{Theorem}[section]
  \theoremstyle{definition}
  \newtheorem{defn}[thm]{Definition}
  \theoremstyle{remark}
  \newtheorem{rem}[thm]{Remark}
  \theoremstyle{plain}
  \newtheorem{assumption}[thm]{Assumption}
  \theoremstyle{plain}
  \theoremstyle{plain}
  \newtheorem{lem}[thm]{Lemma}
  \theoremstyle{remark}
  \theoremstyle{plain}
\global\long\def\mathS{\mathcal{S}}
          \global\long\def\Rr{\mathbb{R}}      \global\long\def\Nn{\mathbb{N}} \global\long\def\eps{\varepsilon}                 \global\long\def\diver{\text{div}\,}                                                   
 \newcommandx\upplus[1][usedefault, addprefix=\global, 1=]{\stackrel{{\scriptscriptstyle +}}{#1}}                          
     \global\long\def\Q{\boldsymbol{Q}}               
\global\long\def\norm#1{\left\Vert #1\right\Vert }
\global\long\def\U{\boldsymbol{U}}
\global\long\def\R{\Rr}
\global\long\def\banachB{\mathcal{B}}
\global\long\def\Pp{\mathbb{P}}
\global\long\def\Ee{\mathbb{E}}
\global\long\def\mathA{\mathcal{A}}
\global\long\def\mathE{\mathcal{E}}
\global\long\def\mathC{\mathcal{C}}
\global\long\def\mathG{\mathcal{G}}
\global\long\def\mathS{\mathcal{S}}
\global\long\def\cV{\mathcal{V}}
\global\long\def\oR{\overline{R}}
\global\long\def\ooU{\overline{\U}}
\newcommand{\meta}[1]{{\color{red} [#1]}}
\newcommand{\Ito}{It\={o}\ }
\newcommand{\N}{\mathbb{N}}
\begin{document}

\title[]
{Large deviation principle for a stochastic Allen--Cahn equation}

\author[M. Heida]{Martin Heida}
\email{Martin.Heida@math.tu-dortmund.de} 

\author[M. R\"oger]{Matthias R\"oger}
\email{matthias.roeger@tu-dortmund.de} 

\subjclass[2000]{}

\keywords{}

\begin{abstract}
In this paper we consider the Allen--Cahn equation perturbed by a stochastic flux term and prove a large deviation principle. Using an associated stochastic flow of diffeomorphisms the equation can be transformed to a parabolic partial differential equation with random coefficients. We use this structure and first provide a large deviation principle for stochastic flows in function spaces with Hölder-continuity in time. Second, we use a continuity argument and deduce a large deviation principle for the stochastic Allen--Cahn equation.
\end{abstract}

\maketitle

\section{Introduction}
The deterministic Allen--Cahn equation
\begin{gather}\label{eq:ac}
  \eps \partial_t u \,=\, \eps \Delta u - \frac{1}{\eps} W'(u)
\end{gather}
is one prominent example of a mesoscopic model for the dynamics of a two-phase system driven by a reduction of surface energy and not conserving the total mass of the phases. Here $W$ denotes a suitable double-well potential with equal minima in $\pm 1$. The two phases correspond to regions where $u$ is close to $+1$ or $-1$, respectively, and are separated by a thin transition layer of approximate width $\eps$. The Allen--Cahn equation \eqref{eq:ac} is characterized as the accelerated $L^2$ gradient flow of the Van der Waals--Cahn--Hilliard energy
\begin{align}
	E_\eps(u) \,=\, \int_U \frac{\eps}{2}|\nabla u|^2 + \frac{1}{\eps}W(u)\,dx. \label{eq:CH-energy}
\end{align}
By the famous Modica--Mortola theorem \cite{MoMo77,Modi87} this energy approximates the perimeter functional as $\eps\to 0$. Besides its motivation from phase separation theory the Allen--Cahn equation is also intensively studied because of its connection to geometric flows: in the sharp interface limit $\eps\to 0$ solutions $u_\eps$ of \eqref{eq:ac} converge to a family of phase indicator functions $u(\cdot,t)$ that move according to mean curvature flow \cite{MoSc90,EvSS92,Ilma93}. 

Stochastic perturbations of \eqref{eq:ac} have been introduced to include for example thermal
effects or any other unresolved degrees of freedom and to describe nucleation and growth phenomena. A perturbation of the Allen--Cahn equation by additive noise leads to the formal stochastic PDE
\begin{gather}\label{eq:sac-add}
  \eps \partial_t u \,=\, \eps \Delta u - \frac{1}{\eps} W'(u) +
  \sigma\zeta,
\end{gather}
where $\sigma>0$ is a small noise intensity parameter and $\zeta$ denotes
a space-time white noise (in higher space dimensions spatially regularised). 
Such type of evolutions were studied in the one-dimensional
case in \cite{Fu95,BdMP95} and in the higher-dimensional case in \cite{Fu99,We08,LioSou98}.
In higher dimensions the
Allen--Cahn equation with space-time white noise is in general not well-posed and the introduction of spatial correlations by a kind of smoothing procedure are necessary. 

In order to better understand the behavior of solutions to \eqref{eq:sac-add} extensions of the Freidlin--Wentzell theory for randomly perturbed dynamical systems have been used to study the small noise limit $\sigma\to 0$ of \eqref{eq:sac-add}. In \cite{FaJo82} for one space dimension, and in \cite{Feng06},\cite{KoRT06} for higher dimensions the Allen--Cahn action functional was identified and leads (for zero spatial correlation length) to
\begin{gather}
 {{\mathcal S}}_\eps(u)\,:=\, \int_0^T\int_{\R^n} {\eps}(\partial_t u)^2
  +\frac{1}{\eps}\big(-\eps\Delta u 
  +\frac{1}{\eps}W^\prime(u)\big)^2\,dx\,dt, \label{def:action-AC}
\end{gather}
see also \cite{HaWe14} for a recent analysis of coupled limits $\sigma\to 0$ and spatial correlation length to zero.

Here we study an alternative stochastic perturbation of the Allen--Cahn equation in the form of a Stratonovich stochastic partial differential equation (SPDE)
\begin{gather}
  d u_\eps\,=\, \Big( \Delta u_\varepsilon -\frac{1}{\eps^2}W'(u_\varepsilon)\Big) dt +
  \nabla u_\varepsilon\cdot X_\sigma(x,\circ dt), \label{eq:sac-flow}
\end{gather}
where $X_\sigma$ is a vectorfield valued Brownian motion. Such an evolution was introduced in \cite{RoeWe13} where the existence of unique H\"older-continuous strong solutions, the tightness of the solutions 
$(u_\eps)_{\eps>0}$ of \eqref{eq:sac-flow}, and the convergence to an
evolution of (random) phase indicator functions 
$u(t,\cdot)\in BV(U)$ has been shown. 

In this contribution our goal is to first take the noise intensity to zero and to consider the limit process $\sigma\to 0$. More precisely we study the large deviation problem associated to \eqref{eq:sac-flow}, where the driving force is given by a vector-field Brownian motion
\begin{align}
	X_{\sigma}(t,x) \,=\, \sqrt{\sigma}\sum_{l=1}^{\infty}\int_{0}^{t}X^{(l)}(s,x)\,\circ dB_{l}(s)+\int_{0}^{t}X^{(0)}(s,x)\, ds,\label{eq:BM-Field}
\end{align}
see below for the precise assumptions on the coefficients. The large deviation theory developed for the stochastic Allen--Cahn equation with additive noise does not apply here. Instead, we exploit the particular structure of \eqref{eq:sac-flow}. Following the approach by Kunita \cite{Kuni97} we consider the Stratonovich flow associated to $-X_\sigma$, that is the solution of the 
stochastic differential equation
\begin{align}
	d\varphi_{s,t}(x)\,= \,&  -X_\sigma(\circ dt, \varphi_{s,t}(x)) \label{eq:flow1}\\
 	\varphi_{s,s}(x)\,= \,& x, \notag
\end{align}
and use the resulting family of diffeomorphism to transform \eqref{eq:sac-flow} into a partial differential equation with random coefficients $R_\varphi$ and $S_\varphi$
\begin{equation}
	\partial_{t}w-R_{\varphi}:D^{2}w-S_{\varphi}\cdot\nabla w-\frac{1}{\eps^{2}}w+\frac{1}{\eps^{2}}w^{3}=0\,,\label{eq:AC-transform1}
\end{equation}
(for the details see Section \ref{sub:A-continuity-result} below). This approach has been used in \cite{RoeWe13} to prove the existence of solution. Here we also take advantage from the same transformation and deduce a large deviation result for \eqref{eq:sac-flow} from a suitable large deviation principle for \eqref{eq:flow1} and a continuity result for the mapping $\varphi\mapsto w$. 

Large deviation principles for stochastic flows have been obtained by Budhiraja, Dupuis and Maroulas \cite{BuDM10} in suitable classes of time-continuous diffeomorphism, see Section \ref{sec:section-4} below. In order to achieve an appropriate continuity result for the mapping $\varphi\mapsto w$ we however need a large deviation in parabolic Hölder spaces. Therefore one key part in our approach is to suitably extend the corresponding results from \cite{BuDM10}.  

The paper is organized as follows. In the next section we fix some notation and state the precise assumptions and main results for large deviations of vector field valued Brownian motions (Theorem \ref{thm:Main-Thm-1}) and of solutions to the stochastic Allen-Cahn equation (Theorem \ref{thm:Main-1}). In Section \ref{sec:Large-Deviation-Principle}, we introduce suitable function spaces and derive some estimates that are crucial for our calculations in the subsequent sections. Section \ref{sec:section-4} provides the proof of Theorem \ref{thm:Main-Thm-1}, while in Section \ref{sec:section-5} we present the proof of Theorem \ref{thm:Main-1}.
\subsection*{Acknowledgement}
This work was partially funded by the DFG-Forschergruppe 718 \emph{Analysis and Stochastics in Complex Physical Systems}. We thank Hendrik Weber for helpful discussions.
\section{Notation and main results}

We first introduce some notation.

Let $\U\subset\Rr^{n}$ be open and bounded with $C^{\infty}$-boundary
and for some fixed time interval $[0,T]$ let $\Q:=[0,T]\times\overline{\U}$.
We denote by $G^{m}$ the set of $C^{m}$-diffeomorphisms on $\Rr^{n}$. Since $\U$
is bounded, the spaces
\begin{eqnarray*}
C_{id}^{m}(\ooU) & := & \left\{ u\in C^{m}(\bar\U;\Rr^{n})\,:\, u|_{\partial\U}= Id\right\} \,,\\
C_{0}^{m}(\ooU) & := & \left\{ u\in C^{m}(\bar\U;\Rr^{n})\,:\, u|_{\partial\U}=\boldsymbol{0}\right\} \,,\\
G_{id}^{m}(\ooU) & := & \{u\in C_{id}^{m}(\ooU) \text{ is a }C^{m}\text{-diffeomorphism}\} \,,
\end{eqnarray*}
equipped with the $C^m(\bar\U;\R^n)$ norm are Banach spaces. 

For a Banach space $\banachB$ we define \[
l_{2}(\banachB):=\left\{ \left(x_{k}\right)_{k\in\Nn}\subset\banachB\,:\,\norm{\left(x_{k}\right)_{k}}_{l_{2}(\banachB)}<\infty\right\} \,,\quad\norm{\left(x_{k}\right)_{k}}_{l_{2}(\banachB)}^{2}:=\sum_{k=1}^{\infty}\norm{x_{k}}_{\banachB}^{2}\]
and $l_{2}:=l_{2}(\Rr)$. Further, for any compact subset $K\subset\Rr^{m}$
and $0<\alpha<1$ we denote by $C^{0,\alpha}(K;\banachB)$ the space of Hölder-continuous functions on $K$ with values in $\banachB$. 

\subsection{Stochastic flows}\label{sec:section-2-1}
Here we follow Kunita \cite{Kuni97} and introduce Brownian motions with a spatial parameter.
Throughout the paper we fix a filtered probability
space $(\Omega,\mathcal{F},\mathbb{P},\left\{ \mathcal{F}_{t}\right\} )$. 

\begin{assumption}
\label{ass:main-assumption}
We assume that we are given numbers $k\in\N$ with $k\geq 4$, $0<\alpha<1$ and two mappings $a,\,b$ such that 
\begin{itemize}
	\item	$a:\,[0,T]\times\Rr^n\times\Rr^n\rightarrow\Rr^{n\times n}$ is $C^{k,\alpha}$ in the second and third component and continuous in time with $a(\cdot,x,y)=0$ whenever $(x,y)\not\in\U\times\U$,
	\item	$b:\,[0,T]\times\Rr^n\rightarrow\Rr^{n}$ is $C^{k,\alpha}$ in space, and bounded and measurable in time with $b(t,x)=0$ for almost all $t$ and $x\not\in\U$.
\end{itemize}
\end{assumption}
In what follows, we fix $\beta<\alpha$ and consider a continous stochastic process $\left\{ X(t)\right\} _{t\ge 0}$ which is a $C^{k,\beta}$-Brownian
motion on $\Rr^n$ with $X(t,x)=0$ on $\Rr^n\backslash\U$ with local characteristics $(a,b)$ as above, in the following sense (see \cite{Kuni97}): 
\begin{itemize}
\item	$X(0)$, $X(t_{i+1})-X(t_{i})$,
		$i=0,1,\dots,m-1$ are independent $C^{k,\beta}(\Rr^n)$ valued random variables
		whenever $0\leq t_{0}<t_{1}<\dots<t_{m}\leq T$,
\item	for each $x\in\Rr^n$,  the random variable $M(t,x):=X(t,x)-\int_{0}^{t}b(r,x)dr$ is a 				continuous martingale,
\item	for the corresponding quadratic variation we have for all $(x,y)\in\Rr^{n\times n}$ that $\left\langle \left\langle M(\cdot,x),M(\cdot,y)\right\rangle \right\rangle _{t}=\int_{0}^{t}a(r,x,y)dr$.
\end{itemize}

According to Kunita \cite{Kuni97} for any local characteristics
$(a,b)$ given as above and for any $0<\beta<\alpha$ such a $C^{k,\beta}$-Brownian motion exists and can be represented in the form 
\begin{align}
	X(t,x) \,=\, \sum_{i=1}^{\infty}\int_{0}^{t}X^{(i)}(r,x)\,\circ dB_{i}(r)+\int_{0}^{t}X^{(0)}(r,x)\, dr\,,
	\label{eq:BmX}
\end{align}
where $\left(B_{l}\right)_{l\in\Nn}$ is a family of i.i.d. Brownian motions and 
\begin{equation}
	\left(X^{(l)}\right)_{l\in\Nn}\subset L^{2}(0,T;C^{3,\beta}(\ooU)). \label{eq:ass-Xk}
\end{equation}

By the above characterization, we find
\begin{equation}
	a(t,x,y) \,=\, \sum_{i=1}^{\infty}X^{(i)}(t,x)\, X^{(i)}(t,y)^{T}\,,\quad b(t,x)=X^{(0)}(t,x)\,		\label{eq:variance-X}
\end{equation}
and  
\begin{equation}
\sup_{x\in\ooU}\int_{0}^{T}\sum_{i=1}^{\infty}\left|X^{(i)}(r,x)\right|^{2}dr\leq T\sup_{t\in [0,T]}\norm{a(t,\cdot)}_{C_{id}^{k,\alpha}(\U)}<\infty.\label{eq:est-X-sqare-a}
\end{equation}
If $a,\,b$ satisfy the above conditions, we find that $X(t,x)=0$ for $x\not\in\U$.

We associate to a $C^{k,\beta}$-Brownian motion $\left\{ X(t)\right\} _{t\ge 0}$ as above the Stratonovich flow $\left(\varphi_{s,t},s\leq t\right)$ and \Ito flow $\left(\phi_{s,t},s\leq t\right)$,
which satisfy the Stratonovich respectively \Ito initial value problem
\begin{align}
	d\varphi_{s,t}(x) & =-X(\circ dt,\varphi_{s,t}(x))\nonumber \\
 	& =-\sum_{i=1}^{\infty}X^{(i)}(t,\varphi_{s,t})\,\circ dB_{i}(t)-X^{(0)}(t,\varphi_{s,t})\, dt\,,\label{eq:def-stratonovich-flow}\\
	\varphi_{s,s}(x) & =x\,.\nonumber 
\end{align}
and
\begin{align}
	d\phi_{s,t}(x) & =-X(dt,\phi_{s,t}(x))\nonumber \\
 	& =-\sum_{i=1}^{\infty}X^{(i)}(t,\phi_{s,t})\, dB_{i}(t)-X^{(0)}(t,\phi_{s,t})\, dt\,,\label{eq:def-ito-flow}\\
	\phi_{s,s}(x) & =x\,.\nonumber 
\end{align}
We remark that $\varphi_{s,t}(x)=\phi_{s,t}(x)=x$ for all $x\not\in\U$ and thus $\varphi_{s,t},\phi_{s,t}\in C_{id}^{k,\beta}(\ooU)$ almost surely. 
\begin{rem}
By \cite[Theorem 4.6.5]{Kuni97} we can assume w.l.o.g. that the flows $\left(\varphi_{s,t},s\leq t\right)$ and $\left(\phi_{s,t},s\leq t\right)$ are stochastic flows of $C^k$-diffeomorphisms in the sence of \cite{Kuni97}.
%
%
\end{rem}

\subsection{$C^{0,\alpha}$-Large Deviation Principle for stochastic flows}
%

We briefly recall the notions of good rate functions and large deviation principle.
\begin{defn}
Let $\mathcal{E}$ be a Polish space. A function $I:\,\mathcal{E}\rightarrow[0,+\infty]$
is called a good rate function on $\mathcal{E}$, if for each $M<\infty$
the sublevel set $\left\{ x\in\mathcal{E}\,:\, I(x)\leq M\right\} $
is a compact subset of $\mathcal{E}$. For every Borel-measurable $A\subset\mathcal{E}$,
we define $I(A):=\inf_{x\in A}I(x)$.
\end{defn}
\begin{defn}
\label{def:LDP}
Let $I$ be a good rate function on $\mathcal{E}$. A
sequence $\left\{ u^{\sigma}\right\}_\sigma>0 $ is said to satisfy the large
deviation principle  (LDP) on $\mathcal{E}$ with good rate function $I$ if the
following large deviation upper and lower bounds hold:
\begin{itemize}
\item For each closed subset $F$ of $\mathcal{E}$, \[
\limsup_{\sigma\rightarrow0}\sigma\log\mathbb{P}(u^{\sigma}\in F)\leq-I(F)\,.\]

\item For each open subset $G$ of $\mathcal{E}$, \[
\liminf_{\sigma\rightarrow0}\sigma\log\mathbb{P}(u^{\sigma}\in G)\geq-I(G)\,.\]

\end{itemize}
\end{defn}
We next describe a suitable large deviation principle for stochastic flows associated to a $C^{k,\alpha}$-Brownian motion. Let $X^{(k)}$ be given as in Assumption \ref{ass:main-assumption}. For any $\sigma>0$ we define $X_\sigma$ as in \eqref{eq:BM-Field}
\begin{equation}
	X_{\sigma}(t,x)=\sqrt{\sigma}\sum_{l=1}^{\infty}\int_{0}^{t}X^{(l)}(s,x)\, dB_{k}(s)+\int_{0}^{t}X^{(0)}(s,x)\, ds\label{eq:general-form-X-sigma}
\end{equation}
and we associate to $X_{\sigma}(t,x)$ the stochastic flows $\phi_{s,t}^{\sigma}$
 and  $\varphi_{s,t}^{\sigma}$
according to \eqref{eq:def-stratonovich-flow} and \eqref{eq:def-ito-flow}.

Next we define for $f\in L^{2}(0,T;l_{2})$ controlled vector fields 
\begin{align}
	b_{f}(t,x) & :=\sum_{l=1}^{\infty}f_{l}(t)X^{(l)}(t,x)+X_{0}(t,x)\,, & X^{0,f}(t,x) & :=\int_{0}^{t}b_{f}(s,x)ds	\label{eq:LDP-X-0-f}
\end{align}
and controlled associated flows $\left(\phi_{t_{0},t}^{0,f}\right)_{t\in(t_{0},T)}$ that are given as the unique
solution of 
\begin{equation}
	\phi_{t_{0},t}^{0,f}(x)=x+\int_{t_{0}}^{t}b_{f}(s,\phi_{t_{0},s}^{0,f}(x))ds\qquad\forall t\in[t_{0},T],\,\forall x\in\ooU. \label{eq:phi-0-f}
\end{equation}
Our first result is a large deviation principle in spaces with Hölder regularity in time. As described in the introduction this extends results from \cite{BuDM10}, where a large deviation principle in spaces of time-continuous functions has been proved.
\begin{thm}
\label{thm:Main-Thm-1} For $(\varphi^{\sigma},X_{\sigma})_{\sigma>0}$
and $(\phi^{\sigma},X_{\sigma})_{\sigma>0}$ defined above and for any
$0<\gamma<\frac{1}{2}$, $\eta_\phi\in(0,1)$ and $\eta_\varphi\in(0,\beta]$ the family $(\varphi^{\sigma},X_{\sigma})_{\sigma>0}$ and $(\phi^{\sigma},X_{\sigma})_{\sigma>0}$
satisfy LDPs in the spaces $C^{0,\gamma}([0,T];C^{k-1,\eta_\varphi}(\ooU)^ 2)$, $C^{0,\gamma}([0,T];C^{k-1,\eta_\phi}(\ooU)^ 2)$ respectively with the good rate function $I_{W}^{\ast}$ defined by  
\begin{equation}
	I_{W}^{\ast}(\varphi,X)=\inf\left\{ \frac{1}{2}\int_{0}^{T}\norm{f(s)}_{l_{2}}^{2}ds\,\,:\,\, f\in L^{2}(0,T;l_{2})\mbox{ s.t. }(\phi^{0,f},X^{0,f})=(\varphi,X)\right\} \,.\label{eq:LDP-rate-function}
\end{equation}
\end{thm}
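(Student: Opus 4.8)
The plan is to deduce Theorem \ref{thm:Main-Thm-1} from the general weak-convergence/variational approach to large deviations (the Budhiraja--Dupuis formulation) together with the finer regularity estimates of Section \ref{sec:Large-Deviation-Principle}. Concretely, one observes that the Stratonovich flow $\varphi^\sigma$ and the \Ito flow $\phi^\sigma$ are obtained as measurable images of the driving Hilbert-space-valued Brownian motion $\sqrt\sigma(B_l)_l$ under a solution map of the SDE \eqref{eq:def-stratonovich-flow}, resp. \eqref{eq:def-ito-flow}; the coupled object $(\varphi^\sigma,X_\sigma)$ is then the image of $\sqrt\sigma(B_l)_l$ under a map $\mathcal{G}^\sigma$, and $(\phi^{0,f},X^{0,f})$ in \eqref{eq:phi-0-f}, \eqref{eq:LDP-X-0-f} is the corresponding ``skeleton'' or limit map $\mathcal{G}^0$ evaluated at the control $f$. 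By the Budhiraja--Dupuis--Maroulas criterion \cite{BuDM10}, to obtain an LDP with the rate function $I_W^\ast$ of \eqref{eq:LDP-rate-function} in a given Polish space $\mathcal{E}$ it suffices to verify two conditions: (i) for every $N<\infty$, if $f^\sigma\rightharpoonup f$ weakly in the ball $\{\|f\|_{L^2(0,T;l_2)}^2\le N\}$ then the controlled/perturbed flows associated to $\sqrt\sigma(B_l)_l + \int_0^\cdot f^\sigma$ converge in $\mathcal{E}$ in probability to $(\phi^{0,f},X^{0,f})$; and (ii) for every $N<\infty$ the map $f\mapsto(\phi^{0,f},X^{0,f})$ is continuous from $\{\|f\|^2_{L^2}\le N\}$ (with the weak topology) into $\mathcal{E}$, which in particular gives compactness of the sublevel sets of $I_W^\ast$ and hence that $I_W^\ast$ is a good rate function. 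The novelty over \cite{BuDM10} is that here $\mathcal{E}=C^{0,\gamma}([0,T];C^{k-1,\eta}(\ooU)^2)$ carries Hölder regularity in time, so both (i) and (ii) must be established in this stronger topology.

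The core of the argument, then, is a set of uniform-in-$\sigma$ moment estimates for the flows and their controlled/perturbed versions in the parabolic Hölder scale, which I expect are exactly the estimates prepared in Section \ref{sec:Large-Deviation-Principle}. First I would record, via the Kunita theory \cite{Kuni97} and Assumption \ref{ass:main-assumption} together with \eqref{eq:ass-Xk}--\eqref{eq:est-X-sqare-a}, that the flow maps $\phi^\sigma_{s,t},\varphi^\sigma_{s,t}$ and their spatial derivatives up to order $k$ admit moment bounds uniform in $\sigma\le 1$ and in the control $f$ within a fixed $L^2$-ball; applying the Burkholder--Davis--Gundy inequality to the SDEs \eqref{eq:def-ito-flow}, \eqref{eq:def-stratonovich-flow} and their $f$-perturbations, and using Gronwall, one gets $\mathbb{E}\,\|\phi^{\sigma,f}_{t_0,t} - \phi^{\sigma,f}_{t_0,s}\|_{C^{k-1,\eta}(\ooU)}^p \le C(p,N)\,|t-s|^{p/2}$ for the \Ito flow (and the analogous bound with exponent governed by $\beta$ for the Stratonovich flow, due to the extra drift correction term, whence the restriction $\eta_\varphi\in(0,\beta]$); a Kolmogorov-type continuity argument then upgrades this to tightness, resp. uniform integrability, of the $C^{0,\gamma}$-in-time norms for any $\gamma<\tfrac12$. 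The $X_\sigma$-component is easier: $X_\sigma\to X^{0,f}$ directly from \eqref{eq:general-form-X-sigma}, \eqref{eq:LDP-X-0-f} with the same BDG/Kolmogorov bookkeeping. To prove the convergence in (i), I would subtract the SDE for the perturbed flow $\phi^{\sigma,f^\sigma}$ from the ODE \eqref{eq:phi-0-f} for $\phi^{0,f}$, estimate the difference in $C^{k-1,\eta}$ pathwise using the Lipschitz-in-space bounds on the $X^{(l)}$ and the previously obtained uniform moment bounds, control the stochastic-integral remainder (which carries a $\sqrt\sigma$ and vanishes) by BDG, handle the weak convergence $f^\sigma\rightharpoonup f$ in the drift term by the standard compensated-compactness/Aubin--Lions-type argument (the map $f\mapsto \int_0^\cdot b_f$ is compact from weak-$L^2$ to $C^0$), and close with Gronwall; the resulting bound is then promoted to the $C^{0,\gamma}$-in-time topology by interpolating the (vanishing) $C^0$-in-time difference against the (uniformly bounded) higher Hölder norms.

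The main obstacle I anticipate is precisely this last upgrade: transferring convergence and tightness from the space of time-continuous functions, where \cite{BuDM10} works, to the parabolic Hölder space $C^{0,\gamma}([0,T];C^{k-1,\eta}(\ooU))$. Two issues enter. One is that the Stratonovich flow $\varphi^\sigma$ solves an SDE whose Stratonovich-to-\Ito conversion introduces a drift built from spatial derivatives of the $X^{(l)}$, so one effectively loses one spatial Hölder index and the available time-Hölder exponent in the spatial derivatives is capped by $\beta$ rather than $\alpha$ --- this is the source of the hypotheses $k\ge 4$ (so that $C^{k-1,\eta}$ still controls enough derivatives after the loss) and $\eta_\varphi\in(0,\beta]$, and the bookkeeping of which norm of $X^{(l)}$ (we only have $L^2(0,T;C^{3,\beta})$ from \eqref{eq:ass-Xk}) feeds which estimate needs care. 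The other is that $C^{0,\gamma}([0,T];C^{k-1,\eta}(\ooU))$ is not separable, so one must work with a slightly better space (e.g. replace $\gamma,\eta$ by larger exponents $\gamma',\eta'$ and use the compact embedding $C^{0,\gamma'}([0,T];C^{k-1,\eta'})\hookrightarrow\hookrightarrow C^{0,\gamma}([0,T];C^{k-1,\eta})$) in order to obtain genuine compactness of sublevel sets and apply the abstract LDP machinery, then restrict back; this is routine but must be stated carefully. Everything else --- verifying that $I_W^\ast$ is a good rate function via continuity of $f\mapsto(\phi^{0,f},X^{0,f})$ on bounded sets in weak-$L^2$, and identifying the contraction-principle form of the rate function --- follows the template of \cite{BuDM10} once the Hölder-in-time estimates are in hand.
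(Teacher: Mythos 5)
Your proposal is correct in its overall strategy and coincides with the paper's framework: both reduce the theorem to the abstract Budhiraja--Dupuis criterion (Theorem \ref{thm:Budhiraj-abstract}, from \cite{budhiraja2008large,BuDM10}) applied in a H\"older-in-time space, with the two verifications being compactness of the level sets via continuity of $f\mapsto(\phi^{0,f},X^{0,f})$ on $S_N[l_2]$ and convergence in distribution of the Girsanov-shifted flows, all resting on uniform moment estimates of the type $\Ee\norm{\phi^{n}_t-\phi^{n}_s}_{k,p}^p\le C|t-s|^{p/2}$ upgraded by Garsia--Rodemich--Rumsey/Kolmogorov arguments (Lemmas \ref{lem:Hairer-Maas-Weber}, Theorem \ref{thm:Kolmogorov-tightness}). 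Where you genuinely diverge is in two technical choices. First, the Stratonovich flow: you propose to estimate $\varphi^\sigma$ directly, carrying the It\^o--Stratonovich correction drift through all bounds and attributing $\eta_\varphi\le\beta$ (and even $k\ge4$) to the resulting loss of regularity; the paper instead observes that the correction term carries a factor of $\sigma$, so $\varphi^\sigma$ and $\phi^\sigma$ are exponentially equivalent in $\lambda^{\gamma}([0,T];C^{k-1,\beta}(\ooU))$ and, by \cite[Theorem 4.2.13]{dembo1998large}, only the It\^o flow needs an LDP (Theorem \ref{thm:LDP-Flow-2}); this is cheaper and is also the actual source of the cap at $\beta$ for $\eta_\varphi$. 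Second, the convergence step: you propose subtracting the skeleton ODE from the controlled SDE and closing with Gronwall, handling the weakly convergent controls by compactness of $f\mapsto\int_0^\cdot b_f$; this is delicate as stated, because the drift difference only converges weakly in time while the flow argument of $b_{f_n}$ is itself moving, so a pathwise Gronwall does not close without first extracting convergent subsequences. The paper resolves exactly this point by a tightness-plus-identification scheme: tightness in the separable little-H\"older spaces $W^{\gamma}_{k-1,id}(\Q)\times W^{\gamma}_{k-1,0}(\Q)$ (Lemma \ref{lem:tightness}), and identification of limits through the finite-dimensional point motions (``$\gamma$-diffusions'') via Kunita's Theorem 1.4.5 (Theorem \ref{thm:Equiv-weak-diff} and Lemma \ref{lem:weak-conv-diff}), which is also where your non-separability concern is dealt with --- not by passing to a compactly embedded better space, but by working throughout in the separable spaces $\lambda^{0,\gamma}$ and only at the very end embedding into $C^{0,\gamma}([0,T];C^{k-1,\eta}(\ooU))$ via the contraction principle. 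So your route is viable, but to make it rigorous you would in effect have to reconstruct the paper's Theorem \ref{thm:Equiv-weak-diff}, and you would be doing harder estimates on the Stratonovich side than the exponential-equivalence shortcut requires.
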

We will give a proof of this theorem in Section \ref{sec:section-4}.

\subsection{Large deviation principle for the stochastic Allen-Cahn equation \eqref{eq:sac-flow}}
Without loss of generality, we set $\eps=1$ as the original problem can always be reduced to that case using a parabolic rescaling. In the following we choose the standard quartic double-well potential $W(r)=\frac{1}{4}(1-r^2)^2$. We now describe our main result concerning the solutions of the
Stratonovich stochastic Allen-Cahn equation \eqref{eq:sac-flow}, that is
\begin{align}
	u(t,x) & =u_{0}(x)+\int_{0}^{t}\left(\Delta u-W'(u)\right)+\int_{0}^{t}\nabla u(s,x)\cdot X_{\sigma}(\circ ds,x)\,,\label{eq:main-EQ-1}\\
\nabla u\cdot\nu_{\U} & =0\qquad\mbox{on }(0,T)\times\partial\U\,,
\label{eq:main-EQ-2}\end{align}
where $u_{0}$ are fixed, smooth deterministic inital data, and where $X_\sigma$ was defined in \eqref{eq:general-form-X-sigma}. Under the assumptions stated above existence of unique
continuous $C^{3,\beta}(\overline{\U})$-valued semimartingale solutions $u$ to \eqref{eq:main-EQ-1}, \eqref{eq:main-EQ-2} has been shown in \cite[Theorem 4.1]{RoeWe13}. 

For a deterministic control $f \in L^{2}([0,T];l_{2})$, initial data $u_{0}\in C^{3,\beta}(\overline{\U})$ and $b_f$ as defined in \eqref{eq:LDP-X-0-f} we consider the following differential equation for $u\in C([0,T];C^{3,\beta}(\overline{U}))$, 
\begin{equation}
\begin{aligned}
	u(t,.) &=\, u_{0}
	+\int_{0}^{t}\nabla u(s,.)\cdot b_f(s,.)\, ds+
	\int_{0}^{t}\left(\Delta u(s,.)-W'(u(s,.))\right)\,ds \\
\nabla u\cdot\nu_{\U} & =\,0\qquad\mbox{on }(0,T)\times\partial\U\,,
\end{aligned} \label{eq:ctr-u}
\end{equation}
for all $t\in[0,T]$.
\begin{thm}
\label{thm:Main-1}
The family $(u_{\sigma})_{\sigma>0}$ satisfies 
a large deviation principle in $C([0,T];C^{2,\beta}(\ooU))\cap C^{0,\beta}([0,T];C^{1,\beta}(\ooU))$ for $\sigma\downarrow 0$ with good rate function 
\begin{equation}
	\hat{I}(u) \,=\, \inf\left\{ \frac{1}{2}\int_{0}^{T}\norm{f(s)}_{l_{2}}^{2}ds\,\,:\, f\in L^{2}([0,T];l_{2})\text{ satisfies }\eqref{eq:ctr-u}\right\} \label{eq:rate-func-AC}
\end{equation}
\end{thm}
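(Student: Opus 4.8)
The plan is to deduce Theorem~\ref{thm:Main-1} from Theorem~\ref{thm:Main-Thm-1} via the contraction principle, exploiting the transformation $\varphi\mapsto w$ sketched in the introduction. The first step is to recall from \cite{RoeWe13} (and Section~\ref{sub:A-continuity-result}) the explicit change of variables: setting $w(t,\cdot):=u(t,\varphi_{0,t}(\cdot))$ — or the appropriate pull-back along the Stratonovich flow $\varphi^{\sigma}$ associated with $-X_\sigma$ — the SPDE \eqref{eq:main-EQ-1}--\eqref{eq:main-EQ-2} becomes the random PDE \eqref{eq:AC-transform1} with coefficients $R_{\varphi},S_{\varphi}$ built from the first and second spatial derivatives of $\varphi^{\sigma}$ and its inverse. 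The crucial point is that \eqref{eq:AC-transform1} is a classical (pathwise) quasilinear parabolic problem: no stochastic integral remains, so its solution depends on the driving noise \emph{only} through the coefficient fields $(R_\varphi,S_\varphi)$, hence continuously through $\varphi^{\sigma}\in C^{0,\gamma}([0,T];C^{k-1,\eta_\varphi}(\ooU))$.

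\textbf{Key steps.} I would proceed as follows. (i) Establish the solution map $\Psi$. Show that for each fixed $\varphi$ in the path space of Theorem~\ref{thm:Main-Thm-1} — with $k\ge 4$, so that $R_\varphi\in C^{0,\gamma}([0,T];C^{k-3,\eta_\varphi})$ has at least Hölder-continuous spatial coefficients — equation \eqref{eq:AC-transform1} has a unique solution $w=\Psi(\varphi)$ in $C([0,T];C^{2,\beta}(\ooU))\cap C^{0,\beta}([0,T];C^{1,\beta}(\ooU))$; this is essentially Schauder theory for linear parabolic equations together with a fixed-point argument absorbing the cubic term (the sign of the nonlinearity gives the needed a priori $L^\infty$ bound, so no blow-up). (ii) Prove continuity of $\Psi$: if $\varphi_j\to\varphi$ in $C^{0,\gamma}([0,T];C^{k-1,\eta_\varphi})$ then $R_{\varphi_j}\to R_\varphi$, $S_{\varphi_j}\to S_\varphi$ in the relevant parabolic Hölder norms (this uses that $k\ge 4$ so that taking two derivatives and inverting the Jacobian preserves enough regularity, plus continuity of composition and inversion on Hölder spaces), and then a stability estimate for \eqref{eq:AC-transform1} — difference of two solutions tested against Schauder estimates — gives $\Psi(\varphi_j)\to\Psi(\varphi)$ in the target space. (iii) Invert the transformation: $u_\sigma(t,\cdot)=\Psi(\varphi^{\sigma})(t,\varphi^{\sigma,-1}_{0,t}(\cdot))$, so $u_\sigma=\Xi(\varphi^{\sigma})$ for a continuous map $\Xi$ (composition with $\varphi^{\sigma,-1}$ is again continuous on the spaces at hand). (iv) Apply the contraction principle: Theorem~\ref{thm:Main-Thm-1} gives the LDP for $(\varphi^{\sigma})$ in $C^{0,\gamma}([0,T];C^{k-1,\eta_\varphi}(\ooU))$ with rate $I^*_W$; pushing forward through the continuous map $\Xi$ yields the LDP for $(u_\sigma)$ with rate $\hat I(u)=\inf\{I^*_W(\varphi,X):\Xi(\varphi)=u\}$. (v) Identify the rate function: show that $\Xi(\varphi^{0,f})=u$ holds exactly when $u$ solves the controlled PDE \eqref{eq:ctr-u} with control $f$ — i.e. the deterministic transformation sends \eqref{eq:ctr-u} to the deterministic version of \eqref{eq:AC-transform1} driven by $b_f$ — which turns $\inf\{I^*_W\}$ into the expression \eqref{eq:rate-func-AC}.

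\textbf{Main obstacle.} The delicate part is step (ii), the \emph{uniform} continuity of $\varphi\mapsto w$ in the correct parabolic Hölder topology. One has to track exactly how many spatial derivatives of $\varphi$ enter $R_\varphi$ and $S_\varphi$ (two, through $D^2\varphi$ and $\nabla(\varphi^{-1})$), verify that $k\ge 4$ leaves a margin so the coefficients land in a Hölder class for which Schauder estimates for \eqref{eq:AC-transform1} are available with constants depending only on the $C^{0,\gamma}_t C^{k-1,\eta}_x$-norm of $\varphi$, and control the time-regularity: the $C^{0,\beta}$-in-time bound on $\nabla w$ is what forces us to have proved the LDP for the flows in $C^{0,\gamma}_t$ rather than merely $C^0_t$, and the interplay between $\gamma$, $\beta$ and $\eta_\varphi$ must be checked to be consistent (recall $\eta_\varphi\le\beta$ and $\gamma<\tfrac12$). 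A secondary technical point is the treatment of the Neumann boundary condition under the change of variables — since $\varphi^{\sigma}=\mathrm{Id}$ on $\partial\U$, the conormal derivative transforms into an oblique-derivative condition that must still be compatible with the Schauder framework. Once these estimates are in place, steps (i), (iii), (iv), (v) are routine.
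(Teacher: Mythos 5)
Your proposal follows essentially the same route as the paper: transform the SPDE along the flow into the random parabolic PDE \eqref{eq:AC-transform1}, prove continuity of $\varphi\mapsto w\mapsto u$ in parabolic H\"older spaces (the paper does this via a uniform Schauder bound plus an $L^2$-energy/Gronwall and compactness argument rather than a direct Schauder stability estimate for the difference), apply the contraction principle to the flow LDP of Theorem \ref{thm:Main-Thm-1} (in the form of Theorem \ref{thm:LDP-Flow-2}), and identify the rate function through the controlled equation \eqref{eq:ctr-u}. Two of the obstacles you flag dissolve in this setting: since the vector fields vanish outside $\U$ the flow equals the identity there, so the transformed boundary condition remains pure Neumann (no oblique-derivative issue), and $k\ge 4$ indeed leaves enough room for $R_\varphi,S_\varphi$ to lie in the H\"older classes needed for the Schauder theory.
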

We give the proof of this theorem in Section \ref{sec:section-5}.


\section{\label{sec:Large-Deviation-Principle}Preliminaries}

\subsection{Function spaces }

To obtain suitable continuity properties in Section \ref{sub:A-continuity-result} it is most convenient to work in parabolic Hölder spaces. 
Therefore, we introduce for any bounded subset
$\U\subset\Rr^{d}$ and any $l>0$ the Hölder spaces $H^{l}(\overline{\U})$ of
$[l]$-times continuously differentiable functions with the $[l]$-th
deriative being in $C^{0,l-[l]}(\overline{\U})$. We denote the corresponding norm
by $\left|\cdot\right|_{\U,l}$. Furthermore, we denote $H^{l/2,l}(\Q)$
the set of functions $u$ satisfying $D_{t}^{r}D_{x}^{s}u\in C(\Q)$
for $2r+s\leq l$ and $D_{t}^{r}D_{x}^{s}u$ being $C^{0,l-[l]}$
in space and $C^{0,\frac{1}{2}(l-[l])}$ in time. The corresponding
norm is denoted $\left|\cdot\right|_{\Q,l}$. 

Working in Hölder spaces has the drawback that these spaces are not separable, which causes some additional difficulties in the proof of the large deviation principle for stochastic flows. To circumvent this problems we introduce the following subspaces of $C^{0,\alpha}(K;\banachB)$ for $0<\alpha<1$, $K\subset \R^d$ compact and $\banachB$ a suitable Banach space:
\begin{align*}
	\lambda^{0,\alpha}(K;\banachB)\,&:=\,\left\{ u\in C^{0,\alpha}(K)\,:\,\lim_{\delta\rightarrow0}\sup_{\left|x-y\right|<\delta}	\frac{\left\Vert u(x)-u(y)\right\Vert _{\banachB}}{\left|x-y\right|^{\alpha}}=0\right\}, \\
	\lambda^{m,\alpha}(K;\banachB)\,&:=\, \{u\in C^m(K;\banachB)\,:\, D^m u \in \lambda^{0,\alpha}(K;\banachB)\}.
\end{align*}
Furthermore we define the spaces
\begin{equation}
\begin{gathered}W_{m}:=C([0,T];C^{m}(\ooU))\,,\qquad\hat{W}_{m}:=C([0,T];G^{m}(\ooU))\,,\\
W_{m}^{\gamma}:=\lambda^{0,\gamma}([0,T];\lambda^{m,2\gamma}(\ooU))\,,\qquad\hat{W}_{m}^{\gamma}:=\lambda^{0,\gamma}([0,T];G^{m}(\ooU))\,,\end{gathered}
\label{eq:def-Wm-Rr-n}
\end{equation}
and set $\lambda_{id}^{m,\gamma}(\ooU):=\lambda^{m,\gamma}(\ooU;\Rr^{n})\cap C_{id}^{m}(\ooU)$,
$\lambda_{0}^{m,\gamma}(\ooU):=\lambda^{m,\gamma}(\ooU;\Rr^{n})\cap C_{0}^{m}(\ooU)$
and
\begin{equation}
\begin{gathered}W_{m,id}^{\gamma}(\Q):=\lambda^{\gamma}([0,T];\lambda_{id}^{m,2\gamma}(\ooU))\,,\qquad W_{m,0}^{\gamma}(\Q):=\lambda^{\gamma}([0,T];\lambda_{0}^{m,2\gamma}(\ooU))\,,\\
\hat{W}_{m}(\Q):=C([0,T];G_{id}^{m}(\ooU)).\end{gathered}
\label{eq:def-Wm-Rr-n2}
\end{equation}

\begin{rem}
\label{rem:hoelder-imbeddings}
By \cite[Theorem 1]{BonicFramptonTromba1969LambdaMan}
the space $\lambda^{0,\alpha}(K;\banachB)$ is separable if $\banachB$ is separable.

Given two Banach spaces $\banachB$ and $\banachB'$, such that $\banachB\hookrightarrow\banachB'$ is compact and $\banachB'$ is separable, the embedding $C^{0,\alpha}(K;\banachB)\hookrightarrow\lambda^{\beta}(K;\banachB')$
is compact for $\beta<\alpha$. For any $\gamma<\frac{1}{2}$ the
embedding $C^{0,\gamma}([0,T];C^{0,2\gamma}(\ooU))\hookrightarrow H^{\gamma,2\gamma}(\Q)$
is continuous.
\end{rem}

\subsection{Inequalities and embeddings}

We now derive some useful inequalities and embeddings and start with
a generalization of the Garsia-Rodemich-Rumsey Lemma \cite{garsia1970real}
to Banach-valued functions.
\begin{lem}
\label{lem:Garsia-Rodemich-Rumsey}Let $\banachB$ be a Banach space,
$\alpha\geq0$ and $p\geq1$. There is a constant $C_{\alpha}\geq1$
such that for any $f\in C([0,T];\banachB)$ with the property that
the right hand side in the following inequality is bounded, we have 
$f\in C^{0,\alpha}([0,T];\banachB)$ and
\begin{equation}
\sup_{x,y\in[0,T]}\frac{\norm{f(x)-f(y)}_{\banachB}}{\left|x-y\right|^{\alpha}}\leq C_{\alpha}\left(\int_{[0,T]^{2}}\frac{\norm{f(x)-f(y)}_{\banachB}^{p}}{\left|x-y\right|^{\alpha p+2}}\, dx\, dy\right)^{\frac{1}{p}}\,.\label{eq:sobolev-hoelder}\end{equation}
\end{lem}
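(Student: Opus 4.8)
The plan is to prove the Banach-valued Garsia--Rodemich--Rumsey inequality by following the classical scalar argument of Garsia, Rodemich and Rumsey, observing that it only uses the triangle inequality and never the linear structure of $\Rr$ beyond that. Set
\[
B \,:=\, \left(\int_{[0,T]^{2}}\frac{\norm{f(x)-f(y)}_{\banachB}^{p}}{\left|x-y\right|^{\alpha p+2}}\, dx\, dy\right)^{\frac{1}{p}},
\]
and assume $B<\infty$. First I would reduce to the unit interval by rescaling, and introduce the function $\Psi(r)=r^{p}$ and the gauge $p(u)=u^{\alpha+2/p}$, so that the hypothesis reads $\int_{[0,T]^2}\Psi\big(\norm{f(x)-f(y)}_{\banachB}/p(|x-y|)\big)\,dx\,dy<\infty$. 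The scalar Garsia--Rodemich--Rumsey lemma in this $(\Psi,p)$ form gives, for all $x,y$,
\[
\norm{f(x)-f(y)}_{\banachB}\,\leq\, 8\int_{0}^{|x-y|}\Psi^{-1}\!\left(\frac{4B^{p}}{u^{2}}\right)p'(du),
\]
and with the above choices the right-hand side evaluates to a constant multiple of $B\,|x-y|^{\alpha}$; this yields both the Hölder bound \eqref{eq:sobolev-hoelder} and, since the right-hand side tends to $0$ as $|x-y|\to0$, the continuity needed to make sense of the statement $f\in C^{0,\alpha}([0,T];\banachB)$.

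The key steps, in order, are: (1) fix a dyadic point $t_{0}\in[0,T]$ at which the ``diagonal'' integral $\int_{0}^{T}\Psi\big(\norm{f(t_0)-f(y)}_{\banachB}/p(|t_0-y|)\big)\,dy$ is controlled by $B^{p}$ — such a point exists by averaging; (2) run the standard nested-interval (dyadic) construction: build a decreasing sequence of subintervals $I_{n}$ with $|I_{n}|\to 0$ and endpoints $t_{n}$ so that $\Psi\big(\norm{f(t_{n})-f(t_{n+1})}_{\banachB}/p(|I_{n}|)\big)$ is dominated by an explicitly controlled quantity coming from two successive applications of the mean-value/averaging argument to the double integral restricted to $I_{n}\times I_{n}$; (3) sum the telescoping estimates $\norm{f(t_{n})-f(t_{n+1})}_{\banachB}$ using that $\Psi$ is increasing and that $\Psi^{-1}$ and $p$ are monotone, turning the sum into the stated integral $\int_{0}^{d}\Psi^{-1}(4B^{p}/u^{2})\,p'(du)$; (4) conclude $t_{n}\to t_{0}$ forces $f(t_{n})\to f(t_0)$ by completeness of $\banachB$, hence the pointwise bound extends to all $x$, and then apply it to a generic pair $x,y$ by the same construction on the interval between them; (5) unwind the substitutions $\Psi(r)=r^p$, $p(u)=u^{\alpha+2/p}$ to get the clean power-law conclusion and absorb all numerical factors into a single constant $C_{\alpha}\geq 1$.

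The one place where Banach-valuedness actually matters is step (1)--(4): one must check that every quantity in the classical proof is either a nonnegative real number (the integrands $\Psi(\norm{f(x)-f(y)}_{\banachB}/p(|x-y|))$) or an element of $\banachB$ manipulated only via $\norm{\cdot}_{\banachB}$ and the triangle inequality, and that the only limiting argument — convergence of the Cauchy sequence $(f(t_n))_n$ in $\banachB$ — is legitimate because $\banachB$ is complete. Measurability of $y\mapsto\norm{f(t_0)-f(y)}_{\banachB}$ is immediate from continuity of $f$, so Fubini/averaging applies without fuss. I therefore expect no genuine obstacle: the main ``work'' is simply transcribing the scalar proof while being careful that no step secretly uses order structure, convexity in the wrong variable, or weak limits on $\banachB$. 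If one prefers to avoid reproducing the whole construction, an acceptable alternative is to cite the scalar lemma in its $(\Psi,p)$ form and note that it is proved verbatim for any complete metric target, applying it to the metric space $(\banachB,\norm{\cdot}_{\banachB})$; but since the paper states the lemma with a specific constant and specific powers, I would include the short reduction showing how $\int_0^d (4B^p/u^2)^{1/p} u^{\alpha+2/p-1}\,du = \tfrac{4^{1/p}}{\alpha}\,B\,d^{\alpha}$ to make the constant $C_{\alpha}$ explicit.
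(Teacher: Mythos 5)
Your proposal is correct, but it follows a different path than the paper. You transcribe the classical Garsia--Rodemich--Rumsey argument in its $(\Psi,p)$ form: pick good points by a pigeonhole/averaging (Chebyshev) selection, run the nested-interval construction, telescope, and use only the triangle inequality plus completeness of $\banachB$, so the argument in fact works for any complete metric target; the specialization $\Psi(r)=r^p$, $p(u)=u^{\alpha+2/p}$ then gives the stated power bound with an explicit constant. The paper instead follows Gubinelli's proof (Lemma 4 of \cite{gubinelli2004controlling}): it never selects points, but averages the increment $R_{xy}=f(x)-f(y)$ over balls to form $\oR(t,r_1,r_2)$ (a Bochner integral, so here the linear structure of $\banachB$ is genuinely used), controls these averages via convexity of $\psi$ and Jensen's inequality as in \eqref{eq:estim-GRR-1}, telescopes over radii $\lambda_n$ defined by $P(\lambda_n)=2P(\lambda_{n+1})$, and uses the assumed continuity of $f$ to pass to the limit $\oR(t,0,0)=0$. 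What each buys: your route is more elementary in its requirements on the target space (metric completeness suffices, no vector-valued integration) and makes the reduction to the scalar lemma transparent; the paper's route avoids the slightly fiddly point-selection step, exploits the continuity hypothesis already present in the statement, and is the natural formulation when $\banachB$-valued averages are available. One small caution common to both arguments: the final integral $\int_0^d u^{\alpha-1}\,du$ converges only for $\alpha>0$, so the case $\alpha=0$ in the statement should be read as the trivial sup bound; your explicit constant computation (and the paper's) implicitly assumes $\alpha>0$, and your displayed value of the constant drops the harmless factor $\alpha+2/p$ coming from $dP$, which is absorbed into $C_\alpha$ anyway.
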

\begin{rem}
We can compare \eqref{eq:sobolev-hoelder} to a classical Sobolev
inequality: Considering the space $W^{s,p}(0,T)$ of $\Rr$-valued
functions with the norm \[
\norm u_{s,p}:=\int_{0}^{T}\int_{0}^{T}\frac{\left|u(x)-u(y)\right|^{p}}{\left|x-y\right|^{1+sp}}dx\, dy\,,\]
we find that $W^{s,p}(0,T)\hookrightarrow C^{0,\delta}[0,T]$ in case
$s-1/p\geq\delta$. Compared with \eqref{eq:sobolev-hoelder}, this
corresponds to $s=\frac{1+\alpha p}{p}$ and $\delta=\alpha$.\end{rem}
\begin{proof}
For simplicity, we expand $f$ in a continuous way by constants outside $[0,T]$. We follow the proof of Lemma 4 in \cite{gubinelli2004controlling}.
Consider $\psi(x):=\left|x\right|^{p}$ and $P(x):=\left|x\right|^{\alpha+2/p}$
as functions $\Rr\rightarrow\Rr$. Furthermore, set $R_{xy}:=f(x)-f(y)$.
We then find by convexity of $\psi$ for any measurable sets $A,B\subset[0,T]$
\begin{align}
\int_{A\times B}\norm{R_{xy}}_{\banachB}\frac{dx\, dy}{\left|A\right|\left|B\right|} & \leq P(d(A,B)/4)\psi^{-1}\left(\int_{A\times B}\psi\left(\frac{\norm{R_{xy}}_{\banachB}}{P(d(x,y)/4)}\right)\frac{dx\, dy}{\left|A\right|\left|B\right|}\right)\nonumber\\
 & \leq P(d(A,B)/4)\psi^{-1}\left(\frac{U}{\left|A\right|\left|B\right|}\right)\,,\label{eq:estim-GRR-1}
\end{align}
where $d(A,B)=\sup_{x\in A,y\in B}\left|x-y\right|$ and $U=\int_{[0,T]^{2}}\psi\left(\frac{\norm{R_{xy}}_{\banachB}}{P(d(x,y)/4)}\right)dx\, dy$. 

Let $\oR(t,r_{1},r_{2}):=\int_{B(t,r_{1})}\frac{du}{\left|B(t,r_{1})\right|}\int_{B(t,r_{2})}\frac{dv}{\left|B(t,r_{2})\right|}R_{uv}$ for $r_1,r_2>0$, with $\oR(t,0,r_{2}):=\int_{B(t,r_{2})}\frac{dv}{\left|B(t,r_{2})\right|}R_{tv}$ and $\oR(t,r_{1},0)$ similarly. Note that $\oR$ is continuous on $[0,\infty)^3$ if one sets $\oR(t,0,0)= 0$ for all $t\geq0$. We choose $s,t\in[0,T]$, $s<t$, define $\lambda_{0}:=t-s$ and 
$\lambda_{n+1}$ through $P(\lambda_{n})=2P(\lambda_{n+1})$, inductively. Then,
by monotonicity of $P$, 
\begin{align*}
  P((\lambda_{n}+\lambda_{n+1})/4) & \leq P(\lambda_{n})=2P(\lambda_{n+1})\\
   & = 4P(\lambda_{n+1})-2P(\lambda_{n+1}) = 4\left[P(\lambda_{n+1})-P(\lambda_{n+2})\right]\,.
\end{align*}
We find, using equation \eqref{eq:estim-GRR-1}
\begin{eqnarray*}
\norm{\oR(t,\lambda_{n+1},\lambda_{n})}_{\banachB} & \leq & P((\lambda_{n}+\lambda_{n+1})/4)\psi^{-1}\left(\frac{U}{\lambda_{n}\lambda_{n+1}}\right)\\
 & \leq & 4\left[P(\lambda_{n+1})-P(\lambda_{n+2})\right]\psi^{-1}\left(\frac{U}{\lambda_{n}\lambda_{n+1}}\right)\\
 & \leq & 4\int_{\lambda_{n+2}}^{\lambda_{n+1}}\psi^{-1}\left(\frac{U}{r^{2}}\right)dP(r)\,.
\end{eqnarray*}
For any sequence of variables $\left( x_{i}\right) _{i\in\Nn}\subset\Rr$,
we find \[
R_{x\, x_{0}}=R_{x\, x_{n+1}}+\sum_{i=0}^{n}R_{x_{i+1}\, x_{i}}\,,\]
and averaging with respect to $x_i$ over $B(t,\lambda_i)$ for $i=0,\dots,n+1$ leads to 
\[
\oR(t,0,\lambda_{0})=\oR(t,0,\lambda_{n+1})+\sum_{i=0}^{n}\oR(t,\lambda_{i+1},\lambda_{i})\,.
\]
Since $\oR$ is continuous and $\oR(t,0,0)=0$, taking the limit $n\rightarrow\infty$ yields
\begin{equation}
\norm{\oR(t,0,\lambda_{0})}_{\banachB}\leq\sum_{i=0}^{\infty}4\int_{\lambda_{i+2}}^{\lambda_{i+1}}\psi^{-1}\left(\frac{U}{r^{2}}\right)dP(r)\leq4\int_{0}^{t-s}\psi^{-1}\left(\frac{U}{r^{2}}\right)dP(r)\,.\label{eq:estim-GRR-2}
\end{equation}
Similarly, we find
 \[
\norm{\oR(s,0,\lambda_{0})}_{\banachB}\leq4\int_{0}^{t-s}\psi^{-1}\left(\frac{U}{r^{2}}\right)dP(r)\,,
\]
and a corresponding estimate for $\oR(t,\lambda_0,0)$. We use $R_{s\, t}=R_{s\, x}+R_{x\, y}+R_{y\, t}$,
 and average over the balls $B(s,\lambda_0)$ in $x$ and $B(t,\lambda_0)$ in $y$ to obtain
$$R_{s\, t}=\oR(s,0,\lambda_0)+\int_{B(s,\lambda_0)\times B(t,\lambda_0)}R_{x\,y}\frac{dx\,dy}{4\lambda_0^2}+\oR(t,\lambda_0,0)$$
Note that we can estimate the second term on the right hand side by $P(\frac34)P(\lambda_0)\psi^{-1}(U/\lambda_0^2)$.
Thus, from the last equation together with \eqref {eq:estim-GRR-1} and \eqref {eq:estim-GRR-2} we get \[
\norm{R_{st}}_{\banachB}\leq10\int_{0}^{t-s}\psi^{-1}\left(\frac{U}{r^{2}}\right)dP(r)\,.\]
Using the definition of $\psi$ and $P$, this finally proves the claim.
 \end{proof}
\begin{lem}
\label{lem:Hairer-Maas-Weber}Consider a Banach space $\banachB$,
$p,q\geq1$, $p>2q$ and a random function $f$ with values in $C([0,T];\banachB)$
such that there is a fixed constant $\Lambda$ with \[
\Ee\norm{f(t)-f(s)}_{\banachB}^{p}\leq \Lambda(t-s)^{\frac{p}{2q}}\quad\forall s<t\qquad\mbox{and}\qquad\Ee\norm f_{L^{\infty}(\banachB)}^p\leq C\,.\]
Then, for every $\alpha<\frac{1}{2q}-\frac{1}{p}$ we have \[
\Ee\norm f_{C^{0,\alpha}([0,T];\banachB)}^p\leq C_{\alpha}(\Lambda+1)\,,\]
where $C_{\alpha}$ is a constant depending only on $\alpha,p,q$.\end{lem}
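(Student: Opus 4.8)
The plan is to deduce Lemma~\ref{lem:Hairer-Maas-Weber} from the Banach-valued Garsia-Rodemich-Rumsey estimate \eqref{eq:sobolev-hoelder} of Lemma~\ref{lem:Garsia-Rodemich-Rumsey}, applied pathwise with the choices $\alpha$ (the desired Hölder exponent) and exponent $p$. With these parameters, \eqref{eq:sobolev-hoelder} reads
\[
  \norm{f}_{C^{0,\alpha}([0,T];\banachB)}^{p}
  \,\leq\, C_{\alpha}^{p}\int_{[0,T]^2}\frac{\norm{f(x)-f(y)}_{\banachB}^{p}}{|x-y|^{\alpha p+2}}\,dx\,dy,
\]
so that taking expectations and using Tonelli gives
\[
  \Ee\norm{f}_{C^{0,\alpha}([0,T];\banachB)}^{p}
  \,\leq\, C_{\alpha}^{p}\int_{[0,T]^2}\frac{\Ee\norm{f(x)-f(y)}_{\banachB}^{p}}{|x-y|^{\alpha p+2}}\,dx\,dy.
\]
Inserting the hypothesis $\Ee\norm{f(t)-f(s)}_{\banachB}^{p}\leq\Lambda|t-s|^{p/(2q)}$ reduces everything to checking that the double integral
\[
  \int_{[0,T]^2}|x-y|^{\frac{p}{2q}-\alpha p-2}\,dx\,dy
\]
is finite. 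After the change of variables $u=x-y$ this is (up to a factor depending on $T$) a one-dimensional integral of $|u|^{\frac{p}{2q}-\alpha p-2}$ near $0$, which converges precisely when $\frac{p}{2q}-\alpha p-2>-1$, i.e.\ when $\alpha<\frac{1}{2q}-\frac{1}{p}$ — exactly the stated range. This yields $\Ee\norm{f}_{C^{0,\alpha}([0,T];\banachB)}^{p}\leq C_{\alpha}(T)\Lambda$ for the Hölder \emph{seminorm}.

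To pass from the seminorm to the full norm one adds the sup-norm bound: $\norm{f}_{C^{0,\alpha}}\leq \norm{f}_{L^\infty(\banachB)}+[f]_{C^{0,\alpha}}$, and by the second hypothesis $\Ee\norm{f}_{L^\infty(\banachB)}^p\leq C$. Combining the two contributions and absorbing constants gives $\Ee\norm{f}_{C^{0,\alpha}([0,T];\banachB)}^p\leq C_{\alpha}(\Lambda+1)$ with a constant depending only on $\alpha,p,q$ (and $T$, which is fixed). One small point to handle is that Lemma~\ref{lem:Garsia-Rodemich-Rumsey} is stated for deterministic $f\in C([0,T];\banachB)$ with finite right-hand side; here one applies it $\omega$-by-$\omega$, noting that the finiteness of $\Ee\big(\int_{[0,T]^2}\cdots\big)$ forces the integrand to be finite for $\Pp$-a.e.\ $\omega$, so the pathwise estimate is valid almost surely, which suffices for the expectation bound.

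The only genuine obstacle is bookkeeping of the exponent arithmetic to land exactly on the threshold $\alpha<\frac{1}{2q}-\frac{1}{p}$, together with the mild measurability/finiteness argument just mentioned; there is no deep difficulty, as this is essentially the classical Kolmogorov-type continuity criterion in Banach-valued form, now available because Lemma~\ref{lem:Garsia-Rodemich-Rumsey} supplies the required GRR inequality for $\banachB$-valued functions. I would write the proof in three short steps: (i) apply \eqref{eq:sobolev-hoelder} pathwise and take expectations; (ii) bound the resulting deterministic kernel integral, identifying the exponent condition; (iii) combine with the $L^\infty$ bound to conclude.
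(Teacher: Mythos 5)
Your proposal is correct and follows essentially the same route as the paper: apply the Banach-valued Garsia--Rodemich--Rumsey estimate of Lemma~\ref{lem:Garsia-Rodemich-Rumsey} pathwise, take expectations, insert the moment hypothesis so that the kernel integral converges exactly when $\alpha<\frac{1}{2q}-\frac{1}{p}$, and add the $L^{\infty}$ moment bound to control the full H\"older norm. Your extra remarks on Tonelli and on the almost-sure finiteness needed to apply the lemma pathwise are just a more careful spelling-out of steps the paper leaves implicit.
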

\begin{proof}
By Lemma \ref{lem:Garsia-Rodemich-Rumsey} we find 
\begin{align*}
\Ee\norm f_{C^{\alpha,0}([0,T];\banachB)}^{p} & \leq C_{\alpha}\left(\int_{[0,T]^{2}}\frac{\Ee\norm{f(t)-f(s)}_{\banachB}^{p}}{\left|t-s\right|^{\alpha p+2}}\, ds\, dt\right)+\Ee\norm f_{C([0,T];\banachB)}^{p}\\
 & \leq C_{\alpha}\left(\Lambda\int_{[0,T]^{2}}\frac{\left|t-s\right|^{\frac{p}{2q}}}{\left|t-s\right|^{\alpha p+2}}\, ds\, dt+\Ee\norm f_{L^{\infty}(\banachB)}^{p}\right)
\end{align*}
and the last integral exists iff $\alpha<\frac{1}{2q}-\frac{1}{p}$. 
\end{proof}
We will need a generalized version of the Arzela-Ascoli theorem and
of Kolmogorovs tightness criterium:
\begin{thm}
\label{thm:Arzela-Ascoli-h=0000F6lder}Given two Banach spaces $\banachB$
and $\banachB'$ with $\banachB\hookrightarrow\banachB'$ compactly
and $\banachB'$ separable, let $\left\{ v_{n}\right\} _{n\in\Nn}\subset C([0,T];\banachB)$
be a sequence of functions with $\sup_{n}\sup_{t\in[0,T]}\norm{v_{n}(t)}_{\banachB}<\infty$
and for some $\alpha>0$ let \[
\sup_{n}\norm{v_{n}(t_{1})-v_{n}(t_{2})}_{\banachB}\leq C(t_{1}-t_{2})^{\alpha}\,.\]
Then, $\left\{ v_{n}\right\} _{n\in\Nn}$ is compact in $\lambda^{0,\gamma}([0,T];\banachB')$
and $C^{0,\gamma}([0,T];\banachB')$ for any $\gamma<\alpha$.\end{thm}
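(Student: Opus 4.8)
The plan is to prove the asserted (relative) compactness by a subsequence argument: I will show that every subsequence of $\{v_{n}\}$ admits a further subsequence converging in $\lambda^{0,\gamma}([0,T];\banachB')$, and since this space embeds continuously (indeed isometrically) into $C^{0,\gamma}([0,T];\banachB')$, this yields relative compactness in both spaces at once. First I would set $M:=\sup_{n}\sup_{t\in[0,T]}\norm{v_{n}(t)}_{\banachB}<\infty$ and let $C_{e}$ be the norm of the continuous embedding $\banachB\hookrightarrow\banachB'$. Then the assumed Hölder bound gives the uniform equicontinuity estimate $\norm{v_{n}(t_{1})-v_{n}(t_{2})}_{\banachB'}\leq C_{e}C\,|t_{1}-t_{2}|^{\alpha}$, while for each fixed $t$ the set $\{v_{n}(t):n\in\Nn\}$ is bounded in $\banachB$ and hence relatively compact in $\banachB'$ because the embedding is compact. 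By the Arzelà--Ascoli theorem for maps with values in a metric space we obtain, along a subsequence which I do not relabel, a limit $v\in C([0,T];\banachB')$ with $v_{n}\to v$ uniformly on $[0,T]$. Passing to the limit $n\to\infty$ in the Hölder bound gives $\norm{v(t_{1})-v(t_{2})}_{\banachB'}\leq C_{e}C\,|t_{1}-t_{2}|^{\alpha}$ as well.

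\textbf{Upgrading to Hölder convergence.} The crucial step is then an elementary interpolation. Put $C':=C_{e}C$, $w_{n}:=v_{n}-v$, $\varepsilon_{n}:=\norm{w_{n}}_{C([0,T];\banachB')}\to 0$ and $\theta:=\gamma/\alpha\in(0,1)$. Splitting $\norm{w_{n}(t_{1})-w_{n}(t_{2})}_{\banachB'}=\norm{w_{n}(t_{1})-w_{n}(t_{2})}_{\banachB'}^{\theta}\,\norm{w_{n}(t_{1})-w_{n}(t_{2})}_{\banachB'}^{1-\theta}$ and estimating the first factor by $(2C'|t_{1}-t_{2}|^{\alpha})^{\theta}$ and the second by $(2\varepsilon_{n})^{1-\theta}$, I obtain $\norm{w_{n}(t_{1})-w_{n}(t_{2})}_{\banachB'}\leq (2C')^{\theta}(2\varepsilon_{n})^{1-\theta}\,|t_{1}-t_{2}|^{\gamma}$, so the $\gamma$-Hölder seminorm of $w_{n}$ tends to $0$; together with $\norm{w_{n}}_{C([0,T];\banachB')}\to 0$ this shows $v_{n}\to v$ in $C^{0,\gamma}([0,T];\banachB')$. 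Moreover each $v_{n}$ actually lies in the little Hölder space $\lambda^{0,\gamma}([0,T];\banachB')$, since the $\alpha$-Hölder bound with $\alpha>\gamma$ forces $\sup_{|t_{1}-t_{2}|<\delta}\norm{v_{n}(t_{1})-v_{n}(t_{2})}_{\banachB'}/|t_{1}-t_{2}|^{\gamma}\leq C'\delta^{\alpha-\gamma}\to 0$; as $\lambda^{0,\gamma}([0,T];\banachB')$ is a closed subspace of $C^{0,\gamma}([0,T];\banachB')$, the limit $v$ belongs to it and $v_{n}\to v$ there as well. Since the extracted subsequence was arbitrary, $\{v_{n}\}$ is relatively compact in $\lambda^{0,\gamma}([0,T];\banachB')$ and hence in $C^{0,\gamma}([0,T];\banachB')$, which is the claim.

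\textbf{Main obstacle.} I do not expect a genuine obstacle here; the argument is a standard compactness/interpolation scheme, and in fact it may equally well be phrased as the special case $K=[0,T]$ of the compact embedding $C^{0,\alpha}(K;\banachB)\hookrightarrow\lambda^{\gamma}(K;\banachB')$ recorded in Remark \ref{rem:hoelder-imbeddings}, the hypotheses on $\{v_n\}$ being exactly boundedness in $C^{0,\alpha}([0,T];\banachB)$. The only points that need a little care are the verification that the vector-valued Arzelà--Ascoli theorem applies — this is precisely where compactness of $\banachB\hookrightarrow\banachB'$ enters, to furnish pointwise precompactness of $\{v_{n}(t)\}$ in $\banachB'$ — and the passage of the $\alpha$-Hölder bound to the limit $v$; once these are in place the interpolation inequality together with the closedness of the little Hölder space completes the proof. (Separability of $\banachB'$ plays no role in the compactness argument itself; it is relevant only because it makes $\lambda^{0,\gamma}([0,T];\banachB')$ separable, which is used in the later applications of the theorem.)
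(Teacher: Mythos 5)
Your proof is correct and follows essentially the same route as the paper: Arzelà--Ascoli for $\banachB'$-valued continuous functions (with pointwise precompactness supplied by the compact embedding $\banachB\hookrightarrow\banachB'$ and equicontinuity by the uniform $\alpha$-Hölder bound), followed by the standard interpolation that converts uniform convergence plus equiboundedness in $C^{0,\alpha}$ into convergence in $C^{0,\gamma}$ and $\lambda^{0,\gamma}$ for $\gamma<\alpha$. The only difference is one of detail: you write out the interpolation inequality and the closedness of the little-Hölder subspace explicitly, whereas the paper delegates exactly this step to the compact-embedding statement recorded in Remark \ref{rem:hoelder-imbeddings}.
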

\begin{proof}
By the Arzela-Ascoli theorem for Banach space valued continuous functions,
$\left\{ v_{n}\right\} _{n\in\Nn}$ is compact in $C([0,T];\banachB')$.
Furthermore, we find equiboundedness of $\left\{ v_{n}\right\} _{n\in\Nn}$
in $C^{0,\alpha}([0,T];\banachB')$, which yields the desired result
for any $\gamma<\alpha$ through embedding of Hölder spaces and Remark
\ref{rem:hoelder-imbeddings}.\end{proof}
\begin{thm}
\label{thm:Kolmogorov-tightness}Given two Banach spaces $\banachB$
and $\banachB'$ with $\banachB\hookrightarrow\banachB'$ compactly
and $\banachB'$ separable, let $\left\{ \psi_{n}\right\} _{n\in\Nn}$
be a sequence of random fields with values in $C([0,T];\banachB)$.
Assume for any $p>1$ there is a positive constant $C_{p}$ such that
\begin{align}
\Ee\norm{\psi_{n}(t)-\psi_{n}(s)}_{\banachB}^{p} & \leq C_{p}\left|t-s\right|^{\frac{p}{2}}\qquad\forall s,t\in[0,T]\,,\label{eq:Kol-eq-1}\\
\Ee\norm{\psi_{n}(t)}_{\banachB}^{p} & \leq C_{p}\qquad\forall t\in[0,T]\label{eq:Kol-eq-2}\end{align}
for any $n\in\Nn$. Then, $\left\{ \psi_{n}\right\} _{n\in\Nn}$ is
tight in $\lambda^{0,\gamma}([0,T];\banachB')$ for any $\gamma<\frac{1}{2}-\frac{1}{p}$.\end{thm}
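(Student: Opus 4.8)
\textbf{Proof plan for Theorem \ref{thm:Kolmogorov-tightness}.}
The plan is to combine the moment bounds \eqref{eq:Kol-eq-1}--\eqref{eq:Kol-eq-2} with the Garsia--Rodemich--Rumsey estimate of Lemma \ref{lem:Hairer-Maas-Weber} to obtain uniform H\"older moment bounds, and then to deduce tightness in the weaker space $\lambda^{0,\gamma}([0,T];\banachB')$ via the compact embedding $\banachB\hookrightarrow\banachB'$ together with the compactness criterion of Theorem \ref{thm:Arzela-Ascoli-h\"older}.

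First I would fix $p>1$ and apply Lemma \ref{lem:Hairer-Maas-Weber} with $q=1$: the hypotheses \eqref{eq:Kol-eq-1} and \eqref{eq:Kol-eq-2} are exactly the two assumptions required there (with $\Lambda = C_p$ and the constant $C = C_p$), so for every $\alpha < \tfrac12 - \tfrac1p$ we obtain a constant $C_\alpha = C_\alpha(p)$ with
\[
	\sup_{n\in\Nn}\Ee\norm{\psi_n}_{C^{0,\alpha}([0,T];\banachB)}^p \,\le\, C_\alpha(C_p+1)\,<\,\infty\,.
\]
In particular, choosing $\alpha$ with $\gamma < \alpha < \tfrac12 - \tfrac1p$, Markov's inequality gives: for every $\kappa>0$ there is $R=R(\kappa)$ such that the set
\[
	K_R \,:=\, \left\{ v\in C([0,T];\banachB)\,:\, \norm{v}_{C^{0,\alpha}([0,T];\banachB)} \le R\right\}
\]
satisfies $\Pp(\psi_n \in K_R) \ge 1-\kappa$ for all $n$. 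It then remains to check that $K_R$ is relatively compact in $\lambda^{0,\gamma}([0,T];\banachB')$; this is precisely the content of Theorem \ref{thm:Arzela-Ascoli-h\"older}, whose hypotheses (uniform bound on $\sup_t\norm{v(t)}_\banachB$ and uniform H\"older modulus with exponent $\alpha>\gamma$) hold on $K_R$ by definition, using that $\banachB\hookrightarrow\banachB'$ compactly and $\banachB'$ is separable. Hence $\overline{K_R}$ is a compact subset of $\lambda^{0,\gamma}([0,T];\banachB')$ capturing mass at least $1-\kappa$ uniformly in $n$, which is exactly the statement that $\{\psi_n\}_{n\in\Nn}$ is tight in $\lambda^{0,\gamma}([0,T];\banachB')$.

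There is no serious obstacle here — the theorem is essentially a packaging of the preceding two results — but the one point requiring a little care is the measurability issue: $\psi_n$ takes values in $C([0,T];\banachB)$ and one must ensure that $\{\psi_n \in K_R\}$ is a measurable event and that the relevant expectations are well defined. Since $C^{0,\alpha}([0,T];\banachB)\hookrightarrow C([0,T];\banachB)$ and the $C^{0,\alpha}$-norm is lower semicontinuous with respect to the $C([0,T];\banachB)$-topology, the set $K_R$ is closed in $C([0,T];\banachB)$, so $\{\psi_n\in K_R\}$ is measurable; and the expectation $\Ee\norm{\psi_n}_{C^{0,\alpha}}^p$ is of a nonnegative (possibly $+\infty$, but by Lemma \ref{lem:Hairer-Maas-Weber} finite) measurable quantity. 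Beyond this the argument is routine, and one should also note that the constant $C_p$ is allowed to depend on $p$, so the range of admissible $\gamma$ increases as $p\to\infty$, consistent with the statement.
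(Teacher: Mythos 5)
Your plan is correct, but it follows a genuinely different route from the paper. The paper's proof follows Kunita's Theorem 1.4.7: it introduces the $q$-adic oscillation quantities $\Delta_N^{\gamma}(\psi_n)$, bounds $\Ee\bigl|\sum_N \Delta_N^{\gamma}(\psi_n)\bigr|^p$ uniformly in $n$ via Kunita's Lemmas 1.4.2--1.4.3, applies Chebyshev to $\sum_N\Delta_N^\gamma(\psi_n)$ and to $\norm{\psi_n(0)}_{\banachB}$, and then verifies that the resulting set $K$ is equibounded and equi-Hölder in $\banachB$, hence compact in $\lambda^{0,\gamma}([0,T];\banachB')$ by Theorem \ref{thm:Arzela-Ascoli-h=0000F6lder}. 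You instead produce a uniform $p$-th moment bound on the full $C^{0,\alpha}([0,T];\banachB)$-norm from Lemma \ref{lem:Hairer-Maas-Weber} (hence ultimately from the Banach-valued Garsia--Rodemich--Rumsey Lemma \ref{lem:Garsia-Rodemich-Rumsey}), apply Markov, and then use the same Arzel\`a--Ascoli-type compactness of the ball $K_R$ for $\gamma<\alpha<\tfrac12-\tfrac1p$. Both proofs share the final ``Chebyshev plus Theorem \ref{thm:Arzela-Ascoli-h=0000F6lder}'' step; the difference is the chaining mechanism that yields the uniform Hölder control. Your variant is more self-contained within this paper, since it reuses machinery already proved here (and the measurability remark about lower semicontinuity of the $C^{0,\alpha}$-norm on $C([0,T];\banachB)$ is a nice touch the paper does not spell out); the paper's variant imports Kunita's lemmas but gets explicit quantitative bounds ($\norm{f(s)-f(t)}_{\banachB}\le 4aq|s-t|^{\gamma}$ on $K$) essentially for free from the reference.

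One point needs repair: the hypotheses \eqref{eq:Kol-eq-1}--\eqref{eq:Kol-eq-2} are \emph{not} exactly those of Lemma \ref{lem:Hairer-Maas-Weber}, whose second assumption is $\Ee\norm{f}_{L^{\infty}(\banachB)}^p\le C$, a moment bound on the supremum over time, whereas \eqref{eq:Kol-eq-2} is only pointwise in $t$. The gap is easily bridged: by Lemma \ref{lem:Garsia-Rodemich-Rumsey} and Tonelli, \eqref{eq:Kol-eq-1} alone gives a uniform bound on the $p$-th moment of the Hölder \emph{seminorm} of $\psi_n$ for every $\alpha<\tfrac12-\tfrac1p$ (the double integral $\int\int|t-s|^{p/2-\alpha p-2}$ converges precisely in this range), and then $\sup_{t}\norm{\psi_n(t)}_{\banachB}\le\norm{\psi_n(0)}_{\banachB}+T^{\alpha}\,[\psi_n]_{C^{0,\alpha}([0,T];\banachB)}$ together with \eqref{eq:Kol-eq-2} at $t=0$ supplies the missing $L^{\infty}$-in-time moment bound, after which Lemma \ref{lem:Hairer-Maas-Weber} (or a direct repetition of its two-line proof) applies. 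With this one-line addition your argument is complete.
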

\begin{proof}
We follow the proof of \cite{Kuni97} Theorem 1.4.7 For arbitrary $q\in\Nn$, $q>1$, we represent any real number $t$
as $t=\sum_{k=0}^{\infty}a_{k}q^{-k}$, where $a_{k}\in\Nn_{0}$,
$k=0,1,2,\dots$ are non-negative integers and $a_{k}<q$ for all
$k>0$. Let $t_{N}=\sum_{k=0}^{N}a_{k}q^{-k}$ and say $t$ is $q$-adic
of length $N$ if $t=t_{N}$ for some $N$. We introduce $\Delta_{N}$
the set of all $q$-adic rationals of length $N$ and for 
$f\in C([0,T];\banachB)$ the values
\begin{align*}
\Delta_{N}(f) & =\max_{s,t\in\Delta_{N}\,,\,\,\left|s-t\right|\leq q^{-N}}\norm{f(s)-f(t)}_{\banachB}\,,\\
\Delta_{N}^{\gamma}(f) & =\Delta_{N}(f)/\left(q^{-N}\right)^{\gamma}\,.
\end{align*}
We infer from \cite{Kuni97}, Lemmas 1.4.2 and 1.4.3
(note the different meaning of $\gamma$ in this reference) that for
$\gamma<\frac{1}{2}-\frac{1}{p}$, there holds\begin{align*}
\norm{\psi_{n}(s)-\psi_{n}(t)}_{\banachB} & \leq4q\left(\sum_{N=1}^{\infty}\Delta_{N}^{\gamma}(\psi_{n})\right)\left|s-t\right|^{\gamma}\,,\\
\sup_{n}\Ee\left(\left|\sum_{N=1}^{\infty}\Delta_{N}^{\gamma}(\psi_{n})\right|^{p}\right) & <\infty\,.\end{align*}
For any $\eps>0$, Chebyschev's inequality yields existence of $a>0$
such that for all $n\in\Nn$ \[
\begin{aligned}\Pp\left(\sum_{N=1}^{\infty}\Delta_{N}^{\gamma}(\psi_{n})>a\right) & <\frac{\eps}{2}\,,\\
\Pp\left(\norm{\psi_{n}(0)}_{\banachB}>a\right) & <\frac{\eps}{2}\,.\end{aligned}
\]
Let \[
K:=\left\{ f\in C([0,T];\banachB)\,:\,\sum_{N=1}^{\infty}\Delta_{N}^{\gamma}(f)<a\,,\,\norm{f(0)}_{\banachB}<a\right\} \,.\]
If $\gamma<\frac{1}{2}-\frac{1}{p}$, we can repeat the arguments
of Lemma 1.4.2, Lemma 1.4.3 and Theorem 1.4.7 of \cite{Kuni97},
to derive \[
\begin{aligned}\norm{f(t)}_{\banachB} & \leq\norm{f(0)}_{\banachB}+\norm{f(0)-f(t)}_{\banachB}\leq a+4aqt^{\gamma}\,,\\
\norm{f(s)-f(t)}_{\banachB} & \leq4aq\left|s-t\right|^{\gamma}\,\end{aligned}
\]
for all $f\in K$. Since $\gamma<\frac{1}{2}-\frac{1}{p}$ was arbitrary,
$K$ is compact in $\lambda^{0,\gamma}([0,T];\banachB')$ by Theorem
\ref{thm:Arzela-Ascoli-h=0000F6lder}. Finally, note that $\Pp\left(\psi_{n}\not\in K\right)<\eps$
and thus $\psi_{n}$ is tight in $\lambda^{0,\gamma}([0,T];\banachB')$
for all $\gamma<\frac{1}{2}-\frac{1}{p}$.
\end{proof}

\subsection{Large deviation principles and continuous mappings}

For the proof of our main Theorems \ref{thm:Main-Thm-1} and \ref{thm:Main-1} 
we will finally need the following contraction principle.
\begin{thm}[Contraction Principle, \cite{dembo1998large} Theorem 4.2.1]
\label{pro:koenig}Let $\mathcal{E}$ and $\tilde{\mathcal{E}}$
be Hausdorff topological spaces and $F:\,\mathcal{E}\rightarrow\tilde{\mathcal{E}}$
be continuous. If $I$ is a good rate function on $\mathcal{E}$,
the function \[
\tilde{I}(v)=\inf\left\{ I(u)\,:\, v=F(u)\right\} \]
is a good rate function on $\tilde{\mathcal{E}}$. If $\left\{ u_{\sigma}\right\} _{\sigma>0}$
is a sequence of $\mathE$-valued random variables satisfying a large
deviation principle on $\mathcal{E}$ with good rate function $I$, the
sequence $\left\{ F(u_{\sigma})\right\}_{\sigma>0} $ satisfies a large deviation
principle on $\tilde{\mathcal{E}}$ with good rate function $\tilde{I}$.
\end{thm}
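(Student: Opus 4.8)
The proof is entirely "soft" and proceeds by transferring everything through the identity $\Pp(F(u_\sigma)\in A)=\Pp(u_\sigma\in F^{-1}(A))$, using that a continuous map has the property that preimages of open (resp.\ closed) sets are open (resp.\ closed). I would set up the bookkeeping identity first: for any $A\subseteq\tilde{\mathcal E}$ one has $F^{-1}(A)=\bigcup_{v\in A}F^{-1}(\{v\})$, hence
\begin{equation*}
\inf_{u\in F^{-1}(A)}I(u)\,=\,\inf_{v\in A}\,\inf_{u:\,F(u)=v}I(u)\,=\,\inf_{v\in A}\tilde I(v)\,=:\,\tilde I(A),
\end{equation*}
with the usual convention $\inf\emptyset=+\infty$. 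This single relation drives both assertions.

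\emph{Step 1: $\tilde I$ is a good rate function.} Clearly $\tilde I:\tilde{\mathcal E}\to[0,+\infty]$. Fix $M<\infty$ and put $\Phi_M:=\{u\in\mathcal E:I(u)\le M\}$, which is compact because $I$ is a good rate function. I would show $\{v\in\tilde{\mathcal E}:\tilde I(v)\le M\}=F(\Phi_M)$. The inclusion $\supseteq$ is immediate from the definition of $\tilde I$. For $\subseteq$, take $v$ with $\tilde I(v)\le M$; since $\tilde{\mathcal E}$ is Hausdorff, $\{v\}$ is closed, so $F^{-1}(\{v\})$ is closed by continuity of $F$, and since $I$ is lower semicontinuous with compact sublevel sets it attains its infimum over the nonempty closed set $F^{-1}(\{v\})$, giving $u^\ast$ with $F(u^\ast)=v$ and $I(u^\ast)=\tilde I(v)\le M$, i.e.\ $v\in F(\Phi_M)$. (If one wishes to bypass attainment, one may instead observe $\{\tilde I\le M\}=\bigcap_{n\in\Nn}F(\Phi_{M+1/n})$, a countable decreasing intersection of compacta.) In either case every sublevel set of $\tilde I$ is the continuous image of a compact set, hence compact, so $\tilde I$ is a good rate function on $\tilde{\mathcal E}$.

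\emph{Step 2: the large deviation bounds.} For the upper bound, let $\tilde F\subseteq\tilde{\mathcal E}$ be closed; then $F^{-1}(\tilde F)$ is closed in $\mathcal E$, and using $\{F(u_\sigma)\in\tilde F\}=\{u_\sigma\in F^{-1}(\tilde F)\}$ together with the LDP for $\{u_\sigma\}_\sigma$ and the bookkeeping identity,
\begin{equation*}
\limsup_{\sigma\to0}\sigma\log\Pp(F(u_\sigma)\in\tilde F)\,=\,\limsup_{\sigma\to0}\sigma\log\Pp(u_\sigma\in F^{-1}(\tilde F))\,\le\,-I(F^{-1}(\tilde F))\,=\,-\tilde I(\tilde F).
\end{equation*}
For the lower bound, let $\tilde G\subseteq\tilde{\mathcal E}$ be open; then $F^{-1}(\tilde G)$ is open, and the same substitution plus the lower bound in the LDP for $\{u_\sigma\}_\sigma$ give
\begin{equation*}
\liminf_{\sigma\to0}\sigma\log\Pp(F(u_\sigma)\in\tilde G)\,=\,\liminf_{\sigma\to0}\sigma\log\Pp(u_\sigma\in F^{-1}(\tilde G))\,\ge\,-I(F^{-1}(\tilde G))\,=\,-\tilde I(\tilde G).
\end{equation*}
Together with Step 1 this yields the claimed LDP for $\{F(u_\sigma)\}_\sigma$ with good rate function $\tilde I$.

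\textbf{Main obstacle.} There is no genuine difficulty: the argument is topological and does not require any probabilistic estimate beyond the hypothesis. The only point needing a moment's care is the identification $\{\tilde I\le M\}=F(\Phi_M)$, which uses either attainment of the infimum defining $\tilde I(v)$ (via lower semicontinuity and compactness of sublevel sets of $I$, plus closedness of singletons in the Hausdorff space $\tilde{\mathcal E}$) or the countable-intersection reformulation. A minor remark on measurability: the bounds only involve $\Pp(u_\sigma\in F^{-1}(\tilde F))$ and $\Pp(u_\sigma\in F^{-1}(\tilde G))$, whose events are, by continuity of $F$, already measurable for the $u_\sigma$ appearing in the hypothesised LDP. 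Since the statement is quoted from \cite{dembo1998large}, one may of course invoke it directly; the sketch above records why it holds.
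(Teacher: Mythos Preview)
Your argument is correct and is essentially the standard proof of the contraction principle as given in \cite{dembo1998large}. Note that the paper does not supply its own proof of this statement: it is quoted verbatim as Theorem~4.2.1 of \cite{dembo1998large} and used as a black box, so there is no ``paper's proof'' to compare against beyond that reference. Your sketch matches the textbook argument: the identity $I(F^{-1}(A))=\tilde I(A)$ transfers both LDP bounds through continuity of $F$, and goodness of $\tilde I$ follows from $\{\tilde I\le M\}=F(\Phi_M)$ together with compactness of $\Phi_M$. The attainment step (or its countable-intersection variant) is handled correctly.
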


\section{\label{sec:section-4}Large deviations for stochastic flows}

The aim of this section is to prove Theorem \ref{thm:Main-Thm-1}. We will obtain this theorem as a consequence of Theorems 
\ref {thm:c-k-gamma-convergence}--\ref{thm:LDP-Flow-2} below. We first introduce some notations.  

Given the filtered probability space 
$(\Omega,\mathcal{F},\mathbb{P},\left\{ \mathcal{F}_{t}\right\} )$ 
from Section \ref{sec:section-2-1}, we define
\begin{align*}
\mathcal{A}\left[l_{2}\right] & :=\left\{ \phi\equiv\left\{ \phi_{i}\right\} _{i\in\Nn}\,\,|\,\,\phi_{i}:[0,T]\rightarrow\Rr\mbox{ is }\left\{ \mathcal{F}_{t}\right\} \mbox{- predictable for all }i\mbox{ and }\right.\\
 & \left.\phantom{:=\{\phi\equiv\left\{ \phi_{i}\right\} _{i\in\Nn}\,\,|\,\,}\int_{0}^{T}\norm{\phi(s)}_{l_{2}}^{2}ds<\infty\mbox{ a.s.}\right\}\,, \\
S_{N}[l_{2}] & :=\left\{ \phi=\left\{ \phi_{i}\right\} _{i\in\Nn}\in L^{2}(0,T;l_{2})\,\,:\,\,\int_{0}^{T}\norm{\phi(s)}_{l_{2}}^{2}ds\leq N\right\}\,,\\
\mathcal{A}_{N}[l_{2}] & :=\left\{ u\in\mathcal{A}[l_{2}]\,\,:\,\, u\in S_{N}[l_2]\mbox{ almost surely}\right\}\,.
\end{align*}
We equip $S_N[l_2]$ with the weak topology in $L^{2}(0,T;l_{2})$ such that $S_{N}[l_{2}]$
is a Polish space.

Now, for $\sigma>0$ consider $X_{\sigma}$, $\phi_{s,t}^{\sigma}$
given by \eqref {eq:def-ito-flow} and \eqref{eq:general-form-X-sigma} and let\[
\phi^{\sigma}:=\left\{ \phi_{s,t}^{\sigma}(\cdot)\,:\,0\leq s\leq t\leq T\right\} \]
be the forward stochastic \Ito flow of $C^{k}$-diffeomorphisms associated
to $X_{\sigma}$.

The following theorem was proved in slightly more generality (i.e.
replacing $\U$ by $\Rr^{n}$) in \cite{budhiraja2008large}.
\begin{thm}
\label{thm:BDM-3-2}\cite{budhiraja2008large} The family $(\phi^{\sigma},X_{\sigma})_{\sigma>0}$
satisfies a LDP in the spaces $\hat{W}_{k-1}\times W_{k-1}$ and $W_{k-1}\times W_{k-1}$
with rate function \[
I_{W}(\phi,X)=\inf\left\{ \frac{1}{2}\int_{0}^{T}\norm{f(s)}_{l_{2}}^{2}ds\,\,:\,\, f\in L^{2}(0,T;l_{2})\mbox{ s.t. }(\phi^{0,f},X^{0,f})=(\phi,X)\right\} \,.\]

\end{thm}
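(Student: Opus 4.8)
The plan is to deduce Theorem~\ref{thm:BDM-3-2} essentially by quotation from \cite{budhiraja2008large} (equivalently \cite{BuDM10}), combined with an elementary restriction argument to pass from $\Rr^n$ to $\ooU$. Recall that the Budhiraja--Dupuis--Maroulas variational framework establishes a large deviation principle for stochastic flows of diffeomorphisms on $\Rr^n$ driven by a vector-field Brownian motion of the form \eqref{eq:BmX}, under integrability and regularity hypotheses on the coefficients $(X^{(l)})_{l\in\Nn}$. Our Assumption~\ref{ass:main-assumption} together with \eqref{eq:ass-Xk} and \eqref{eq:est-X-sqare-a} guarantees precisely the hypotheses needed: the $X^{(l)}$ lie in $L^2(0,T;C^{k-1,\beta}(\ooU))$ (indeed one more derivative), with the uniform square-summability bound \eqref{eq:est-X-sqare-a}, and all $X^{(l)}$ vanish outside $\U$. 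I would therefore first verify, in one paragraph, that these conditions match the standing assumptions of \cite{budhiraja2008large}, so that the abstract theorem applies verbatim on $\Rr^n$, yielding the LDP for $(\phi^\sigma, X_\sigma)_{\sigma>0}$ in $C([0,T];G^{k-1}(\Rr^n))\times C([0,T];C^{k-1}(\Rr^n))$ with rate function $I_W$ given by the variational formula via the skeleton equation \eqref{eq:phi-0-f}.

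Second, I would record the localization step. Since $X^{(l)}(t,\cdot)$ vanishes on $\Rr^n\setminus\U$ for every $l\geq 0$, both the \Ito\ flow $\phi_{s,t}^\sigma$ and the controlled flow $\phi^{0,f}_{s,t}$ satisfy $\phi_{s,t}(x)=x$ for $x\notin\U$, as noted after \eqref{eq:def-ito-flow}; hence they are determined by their restriction to $\ooU$ and take values in $G^{k-1}_{id}(\ooU)$. The restriction map from $\{\,\psi\in G^{k-1}(\Rr^n): \psi=\mathrm{id}\ \text{on}\ \Rr^n\setminus\U\,\}$ to $G^{k-1}_{id}(\ooU)$, and likewise at the level of $C([0,T];\cdot)$, is a homeomorphism onto its image (it is bi-continuous for the relevant $C^{k-1}$-norms, by elementary estimates extending a function and its inverse by the identity). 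Applying the contraction principle, Theorem~\ref{pro:koenig}, with this homeomorphism as the continuous map $F$ transports the LDP on the $\Rr^n$-spaces to the claimed LDP on $\hat W_{k-1}\times W_{k-1}$, and the rate function transforms correctly: since $F$ is injective the infimum in the contraction principle is trivial and we recover exactly $I_W(\phi,X)$ with the constraint $(\phi^{0,f},X^{0,f})=(\phi,X)$ now read on $\ooU$. The LDP in $W_{k-1}\times W_{k-1}$ then follows from the one in $\hat W_{k-1}\times W_{k-1}$ by composing with the continuous inclusion $G^{k-1}(\ooU)\hookrightarrow C^{k-1}(\ooU;\Rr^n)$ and again invoking Theorem~\ref{pro:koenig}.

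The main obstacle, such as it is, is bookkeeping rather than substance: one must check that the precise regularity and integrability assumptions imposed in \cite{budhiraja2008large} on the driving field --- typically a condition of the form $(X^{(l)})_l\in L^2(0,T;\mathcal{C})$ for an appropriate space $\mathcal{C}$ of $C^{k-1}$ (or $C^k$) vector fields with a uniform bound like \eqref{eq:est-X-sqare-a}, plus measurability in time --- are implied by Assumption~\ref{ass:main-assumption} and \eqref{eq:variance-X}--\eqref{eq:est-X-sqare-a}. I expect this to go through with room to spare because we assume $k\geq 4$ and $C^{k,\alpha}$ spatial regularity, whereas the diffeomorphism LDP only needs $C^{k-1}$ output; the extra derivative is exactly what will later give compactness of embeddings for the Hölder-in-time refinement in the subsequent theorems. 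No genuinely new estimate is needed at this stage; the content of the section lies in the later theorems \ref{thm:c-k-gamma-convergence}--\ref{thm:LDP-Flow-2}, which upgrade the time regularity of this baseline LDP.
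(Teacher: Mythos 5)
Your proposal matches the paper's treatment: Theorem~\ref{thm:BDM-3-2} is not proved in the paper but simply quoted from \cite{budhiraja2008large}, with the passage from $\Rr^n$ to $\ooU$ left implicit in the remark that the result "was proved in slightly more generality (i.e.\ replacing $\U$ by $\Rr^n$)". Your verification of the hypotheses and the restriction/contraction-principle localization (using that the flows equal the identity outside $\U$) just makes that implicit reduction explicit, so the approach is essentially the same.
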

Below, in Theorem \ref{thm:LDP-Flow-2}, we generalize this theorem to $W_{k-1,id}^{\gamma}(\Q)\times W_{k-1,0}^{\gamma}(\Q)$ (see the definition in \eqref{eq:def-Wm-Rr-n2}).
However, we first need to show that $(\phi^{\sigma},X_{\sigma})_{\sigma>0}$
have enough regularity:
\begin{lem}
\label{lem:regularity-phi-X}For all $\sigma>0$ and all $0<\gamma<\frac{1}{2}$,
the pair $(\phi^{\sigma},X_{\sigma})$ is in $\hat{W}_{k-1}(\Q)\times W_{k-1,0}^{\gamma}(\Q)$
and $W_{k-1,id}^{\gamma}(\Q)\times W_{k-1,0}^{\gamma}(\Q)$ almost
surely.  \end{lem}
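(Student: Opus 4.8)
The plan is to establish the claimed regularity by combining the representation of the flow through its defining SDE with the Garsia--Rodemich--Rumsey--type estimate of Lemma \ref{lem:Garsia-Rodemich-Rumsey} (or rather its probabilistic corollary, Lemma \ref{lem:Hairer-Maas-Weber}), applied to the $C^{k-1,\eta}$-valued processes $t\mapsto \phi^{\sigma}_{s,t}$ and $t\mapsto X_\sigma(t,\cdot)$ and all their spatial derivatives up to order $k-1$. First I would recall from Kunita \cite{Kuni97} that, under Assumption \ref{ass:main-assumption} and the integrability \eqref{eq:ass-Xk}, \eqref{eq:est-X-sqare-a}, the stochastic flow $\phi^{\sigma}_{s,t}$ is almost surely a flow of $C^k$-diffeomorphisms fixing $\partial\U$, so that $\phi^{\sigma}_{s,t}\in G^{k}_{id}(\ooU)$ and in particular $\phi^{\sigma}-\mathrm{Id}\in C_0^{k}(\ooU)$ for each fixed $s\le t$; this already gives the $\hat W_{k-1}(\Q)$ membership of the first component once time-continuity is known, and the boundary behaviour shows $X_\sigma(t,\cdot)\in C_0^{k}(\ooU)$, which is the structural part of $W_{k-1,0}^{\gamma}(\Q)$.

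Next I would prove the Hölder-in-time estimates. For $X_\sigma$ this is essentially immediate: writing $X_\sigma(t,\cdot)-X_\sigma(s,\cdot)=\sqrt{\sigma}\sum_l\int_s^t X^{(l)}(r,\cdot)\,dB_l(r)+\int_s^t X^{(0)}(r,\cdot)\,dr$, the Burkholder--Davis--Gundy inequality applied in the Banach space $C^{k-1,\eta}(\ooU)$ (or, to stay scalar, applied pointwise in $x$ together with the Sobolev/Hölder embedding and \eqref{eq:est-X-sqare-a}) gives $\Ee\|X_\sigma(t)-X_\sigma(s)\|_{C^{k-1,\eta}}^p\le C_p|t-s|^{p/2}$ for every $p$, together with a uniform bound $\Ee\|X_\sigma\|^p_{L^\infty(C^{k-1,\eta})}<\infty$; Lemma \ref{lem:Hairer-Maas-Weber} with $q=1$ then yields $X_\sigma\in C^{0,\gamma}([0,T];C^{k-1,\eta})$ a.s. for every $\gamma<\tfrac12$, and since $X^{(l)}\in C^{k,\beta}$ in space one may even land in $\lambda^{k-1,2\gamma}(\ooU)$, which is what $W^{\gamma}_{k-1,0}(\Q)$ requires (using the little-Hölder spaces and the density argument behind Remark \ref{rem:hoelder-imbeddings}). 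For the flow $\phi^{\sigma}$ one differentiates \eqref{eq:def-ito-flow} in $x$ up to order $k-1$; the resulting system for $(\phi^{\sigma}_{s,t},D\phi^{\sigma}_{s,t},\dots,D^{k-1}\phi^{\sigma}_{s,t})$ is again an SDE with coefficients that are polynomial in the derivatives and built from the $C^{k,\beta}$ fields $X^{(l)}$, so Kunita's moment estimates give $\sup_{s\le t}\Ee\|D^j\phi^{\sigma}_{s,t}\|_{C^{0,\eta}}^p<\infty$ for $j\le k-1$ and $\Ee\|\phi^{\sigma}_{s,t}-\phi^{\sigma}_{s,s'}\|_{C^{k-1,\eta}}^p\le C_p|t-s'|^{p/2}$; a second application of Lemma \ref{lem:Hairer-Maas-Weber} gives $\phi^{\sigma}\in C^{0,\gamma}([0,T];C^{k-1,\eta})$ a.s. for all $\gamma<\tfrac12$, and, because the right-hand side of the differentiated SDE lies in $L^2$ in time, one upgrades to $\lambda^{k-1,2\gamma}$ in space, giving $W^{\gamma}_{k-1,id}(\Q)$ after restoring the boundary condition $\phi^{\sigma}_{s,t}|_{\partial\U}=\mathrm{Id}$.

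The main obstacle I anticipate is the bookkeeping for the \emph{little-Hölder} (lambda) spaces rather than the ordinary Hölder spaces: Lemma \ref{lem:Hairer-Maas-Weber} is stated for $C^{0,\alpha}$, so to conclude membership in $\lambda^{0,\gamma}([0,T];\lambda^{k-1,2\gamma}(\ooU))$ one must exploit that the time-regularity exponent $\gamma$ can be taken strictly larger than the one actually needed (so $C^{0,\gamma'}\hookrightarrow\lambda^{0,\gamma}$ for $\gamma<\gamma'$) and, in space, that $X^{(l)}$ and the flow derivatives are genuinely $C^{k,\beta}$ with $\beta>2\gamma$ for $\gamma$ small, so that $C^{k,\beta}\hookrightarrow\lambda^{k-1,2\gamma}$ compactly; this is exactly the content of Remark \ref{rem:hoelder-imbeddings}. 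A secondary technical point is justifying the BDG estimate in the infinite series $\sum_l X^{(l)}\circ dB_l$ at the level of the $C^{k-1,\eta}(\ooU)$ norm uniformly in $\sigma$; here one uses \eqref{eq:est-X-sqare-a} together with the elementary fact that the $C^{k-1,\eta}$ norm is controlled by finitely many scalar seminorms to reduce to the scalar BDG inequality. Modulo this routine (if slightly tedious) verification, the lemma follows by assembling the two applications of Lemma \ref{lem:Hairer-Maas-Weber} with the embeddings of Remark \ref{rem:hoelder-imbeddings} and Kunita's a.s.\ diffeomorphism property.
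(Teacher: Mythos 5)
Your overall architecture coincides with the paper's: uniform moment estimates on the increments of $\phi^{\sigma}$ and $X_{\sigma}$, then Lemma \ref{lem:Hairer-Maas-Weber} for the H\"older regularity in time, an embedding into the little-H\"older (lambda) spaces in space, and Kunita's Theorem 4.6.5 plus Assumption \ref{ass:main-assumption} for the diffeomorphism property and the boundary values. The genuine difference is where the moment estimates come from: the paper does not re-derive them but imports the uniform-in-$\sigma$ Sobolev bounds $\sup_{\sigma}\Ee\norm{\phi_{t}^{\sigma}-\phi_{s}^{\sigma}}_{W^{k,p}}^{p}\leq C_{p}|t-s|^{p/2}$ (and the analogous bounds for $X_{\sigma}$) from Lemmas 4.7--4.9 of \cite{budhiraja2008large}, and then uses the Sobolev embedding $W^{k,p}(\U)\hookrightarrow\lambda^{k-1,\gamma}(\ooU)$ for $p$ large; you instead propose to differentiate the SDE and run Burkholder--Davis--Gundy directly at the level of $C^{k-1,\eta}$ norms. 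That route can be made to work, but two of your technical shortcuts need repair. First, the claim that the $C^{k-1,\eta}$ norm is ``controlled by finitely many scalar seminorms'' so that scalar BDG suffices is not correct: the H\"older seminorm is a supremum over infinitely many point pairs, and BDG does not hold in an arbitrary Banach space. The standard (and the paper's) remedy is exactly your parenthetical alternative: apply BDG pointwise in $x$ (and to difference quotients), integrate to get $W^{k,p}$ moment bounds, and only then embed into H\"older/lambda spaces -- i.e.\ precisely the BDM Sobolev estimates. Second, your condition ``$\beta>2\gamma$ for $\gamma$ small'' for the spatial embedding would prove the lemma only for $\gamma<\beta/2$, short of the claimed full range $0<\gamma<\frac12$. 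No such restriction is needed: since the relevant objects have one spare spatial derivative (regularity of order $k$ versus the required $k-1$ plus $2\gamma$ little-H\"older), the embedding into $\lambda^{k-1,2\gamma}(\ooU)$ holds for every $2\gamma<1$; equivalently, in the paper's route one takes $p$ large enough that $\frac{3}{p}<1-2\gamma$. With these two corrections your argument is essentially a self-contained re-derivation of what the paper obtains by citation.
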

\begin{proof}
The proof follows \cite[Prop. 4.10]{budhiraja2008large}. Introducing
the notation $\norm{\cdot}_{j,p}$ for the norm on $W^{j,p}(\U)$, we note that according to \cite{budhiraja2008large} Lemmas 4.7-4.9, for each $p>1$ there
exists $C_{p}$ such that \begin{align*}
\sup_{\sigma}\Ee\norm{\phi_{t}^{\sigma}-\phi_{s}^{\sigma}}_{k,p}^{p} & \leq C_{p}\left|t-s\right|^{p/2}\\
\sup_{\sigma}\Ee\norm{X_{\sigma}(t,\cdot)-X_{\sigma}(s,\cdot)}_{k,p}^{p} & \leq C_{p}\left|t-s\right|^{p/2}\end{align*}
and due to the initial values $X_{\sigma}(0,\cdot)=0$, $\phi_{0}^{\sigma}(x)=x$
we also have \[
\sup_{\sigma}\Ee\norm{\phi_{t}^{\sigma}}_{k,p}^{p}+\sup_{\sigma}\Ee\norm{X_{\sigma}(t,\cdot)}_{k,p}^{p}\leq C_{p}\,.\]
Since the Sobolev embedding $W^{k,p}(\U)\hookrightarrow C^{k-1,\gamma}(\ooU)$
is continuous if $\gamma>0$ and $\frac{3}{p}<1-\gamma$ \cite{Adams1975}, also $W^{k,p}(\U)\hookrightarrow\lambda^{k-1,\gamma}(\ooU)$
is continuous for all $\gamma>0$ with $\frac{3}{p}<1-\gamma$. Lemma \ref{lem:Hairer-Maas-Weber}
yields the desired Hölder-regularity in time.The other properties, i.e. $\phi$ being a diffeomorphism and the boundary values follow from \cite{Kuni97}, Theorem 4.6.5 and Assumption \ref{ass:main-assumption}.  
\end{proof}

\subsection{Proof of Theorem \ref{thm:Main-Thm-1}} \label{sub:proof-Main-Thm-1}

Let $\left\{ f_{n}\right\} _{n\in\Nn}$ with $f_{n}=\left\{ f_{n}^{l}\right\} _{l\in\Nn}$
be a sequence in $\mathA_{N}[l_{2}]$ for some fixed $N<\infty$ and
let $f\in\mathA_{N}[l_{2}]$. Let $\left\{ \sigma_{n}\right\} _{n\in\Nn}$
be a sequence of positive numbers such that $\sigma_{n}\rightarrow0$
for $n\rightarrow\infty$. For simplicity of notation, we set \[
M(t,x):=\sum_{k=1}^{\infty}\int_{0}^{t}X^{(k)}(s,x)\, dB_{k}(s)\,,\qquad\forall(x,t)\in\Rr^{n}\times[0,T]\,.\]
Then, we define the following quantities: \begin{align}
X_{n}(t,x) & =\int_{0}^{t}b_{f_{n}}(s,x)\, ds+\sqrt{\sigma_{n}}\int_{0}^{t}M(ds,x)\,,\label{eq:Def-X-n}\\
X_{0}(t,x) & =\int_{0}^{t}b_{f}(s,x)\, ds\,,\nonumber \\
\phi_{t}^{n}(x) & =x+\int_{0}^{t}b_{f_{n}}(s,\phi_{s}^{n}(x))ds+\sqrt{\sigma_{n}}\int_{0}^{t}M(ds,\phi_{s}^{n}(x))\,,\label{eq:Def-phi-n}\\
\phi_{t}^{0}(x) & =x+\int_{0}^{t}b_{f}(s,\phi_{s}^{0}(x))ds\,\nonumber 
\end{align}
where $b_{f_n}$, $b_{f}$ are defined by \eqref{eq:LDP-X-0-f}.

Note that due to Lemma \ref{lem:regularity-phi-X}, $\left(\phi_{t}^{n},X_{n}\right),\left(\phi_{t}^{0},X_{0}\right)\in W_{k-1,id}^{\gamma}(\Q)\times W_{k-1,0}^{\gamma}(\Q)$.
We next specify suitable notions of weak convergence in $\hat{W}_{k-1}\times W_{k-1}$
and $W_{k-1,id}^{\gamma}(\Q)\times W_{k-1,0}^{\gamma}(\Q)$:
\begin{defn}
\cite{budhiraja2008large} Let $\hat{\Pp}_{k-1}^{n}$, $\hat{\Pp}_{k-1}^{0}$
be the measures induced by $\left(\phi_{t}^{n},X_{n}\right)$, $\left(\phi_{t}^{0},X_{0}\right)$
respectively on $\hat{W}_{k-1}\times W_{k-1}$, i.e. \[
\hat{\Pp}_{k-1}^{n}(A)=\Pp\left((\phi^{n},X_{n})\in A\right)\,,\quad\hat{\Pp}_{k-1}^{0}(A)=\Pp\left((\phi^{0},X_{0})\in A\right)\qquad\forall A\in\banachB(\hat{W}_{k-1}\times W_{k-1})\,.\]
The sequence $\left\{ \left(\phi_{t}^{n},X_{n}\right)\right\} _{n\in\Nn}$
is said to converge weakly as $G^{k-1}$-flows to $\left(\phi_{t}^{0},X_{0}\right)$
as $n\rightarrow\infty$ if $\hat{\Pp}_{k-1}^{n}$ converges weakly as measures
to $\hat{\Pp}_{k-1}^{0}$ as $n\rightarrow\infty$.
\end{defn}
\begin{defn}
Let $\Pp_{k-1}^{n}$, $\Pp_{k-1}^{0}$ be the measures induced by
$\left(\phi_{t}^{n},X_{n}\right)$, $\left(\phi_{t}^{0},X_{0}\right)$
respectively on $W_{k-1,id}^{\gamma}(\Q)\times W_{k-1,0}^{\gamma}(\Q)$. The sequence
$\left\{ \left(\phi_{t}^{n},X_{n}\right)\right\} _{n\in\Nn}$ is said
to converge weakly as $C_{\gamma}^{k-1}$-flows to $\left(\phi_{t}^{0},X_{0}\right)$
as $n\rightarrow\infty$ if $\Pp_{k-1}^{n}$ converges weakly as measures to $\Pp_{k-1}^{0}$
as $n\rightarrow\infty$.
\end{defn} 
Note that the last definition makes sense in view of Lemma \ref{lem:regularity-phi-X} 
which guaranties 
$\left(\phi_{t}^{n},X_{n}\right)\in W_{k-1,id}^{\gamma}(\Q)\times W_{k-1,0}^{\gamma}(\Q)$.
We find the following weak continuity property of the mapping $f\mapsto(\phi_t,X)$, 
which was proved in \cite{budhiraja2008large} for the $G^{k-1}$-case:
\begin{thm}
\label{thm:c-k-gamma-convergence}Let $f_{n}$ converge to $f$ in
distribution as $S_{N}[l_{2}]$-valued sequence of random variables.
Then the sequence $\left\{ \left(\phi^{n},X_{n}\right)\right\} _{n\in\Nn}$
converges weakly as $C_{\gamma}^{k-1}$-flows and $G^{k-1}$-flows
to the pair $\left(\phi_{t}^{0},X_{0}\right)$ as $n\rightarrow\infty$
for any $\gamma<\frac{1}{2}$.\end{thm}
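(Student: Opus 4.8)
The plan is to reduce the claimed weak convergence as $C_\gamma^{k-1}$-flows to the already-established weak convergence as $G^{k-1}$-flows (Theorem \ref{thm:BDM-3-2} together with the Budhiraja--Dupuis--Maroulas argument restated here), using tightness in the stronger H\"older-in-time topology plus uniqueness of the limit. Concretely, I would first invoke Lemma \ref{lem:regularity-phi-X} and the uniform moment bounds from its proof: for every $p>1$ there is $C_p$, independent of $n$, with $\Ee\norm{\phi^n_t-\phi^n_s}_{k,p}^p\le C_p|t-s|^{p/2}$, $\Ee\norm{X_n(t,\cdot)-X_n(s,\cdot)}_{k,p}^p\le C_p|t-s|^{p/2}$, and $\Ee\norm{\phi^n_t}_{k,p}^p+\Ee\norm{X_n(t,\cdot)}_{k,p}^p\le C_p$. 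Here the only point to check is that these estimates hold uniformly over $f_n\in\mathcal A_N[l_2]$; this follows exactly as in \cite{budhiraja2008large}, since the drift $b_{f_n}$ contributes a term controlled by $\int_0^T\norm{f_n(s)}_{l_2}^2\,ds\le N$ via Cauchy--Schwarz and \eqref{eq:est-X-sqare-a}, and the martingale part is handled by Burkholder--Davis--Gundy exactly as in the uncontrolled case.

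Second, I would upgrade these moment bounds to tightness in the target space. Fix $q$ with $\frac3{p}<1-\frac1{2q}$ small; by the Sobolev embedding $W^{k,p}(\U)\hookrightarrow\lambda^{k-1,\eta}(\ooU)$ (continuous whenever $\frac3p<1-\eta$, as used in Lemma \ref{lem:regularity-phi-X}) we may view $\phi^n$ and $X_n$ as random fields with values in $C([0,T];\banachB)$ with $\banachB=\lambda^{k-1,2\gamma'}(\ooU)$ for $2\gamma'$ slightly above $2\gamma$, and the two displayed moment bounds are precisely hypotheses \eqref{eq:Kol-eq-1}--\eqref{eq:Kol-eq-2} of Theorem \ref{thm:Kolmogorov-tightness} with exponent $p/2$ in time. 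Choosing $\banachB'=\lambda^{k-1,2\gamma}(\ooU)$ or any separable space into which $\banachB$ embeds compactly, Theorem \ref{thm:Kolmogorov-tightness} (applied with $q=1$) gives that $\{\phi^n\}$ and $\{X_n\}$ are tight in $\lambda^{0,\gamma''}([0,T];\lambda^{k-1,2\gamma''}(\ooU))$ for any $\gamma''<\frac12-\frac1p$; letting $p\to\infty$ along a suitable sequence and using a diagonal argument covers all $\gamma<\frac12$. Since the boundary conditions $\phi^n|_{\partial\U}=id$, $X_n|_{\partial\U}=0$ are preserved under these limits, the laws $\Pp^n_{k-1}$ are tight on $W^\gamma_{k-1,id}(\Q)\times W^\gamma_{k-1,0}(\Q)$.

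Third, I would identify the limit. By Prokhorov, every subsequence of $\{\Pp^n_{k-1}\}$ has a further subsequence converging weakly on $W^\gamma_{k-1,id}(\Q)\times W^\gamma_{k-1,0}(\Q)$ to some measure $\mu$. The embedding $W^\gamma_{k-1,id}(\Q)\times W^\gamma_{k-1,0}(\Q)\hookrightarrow \hat W_{k-1}\times W_{k-1}$ is continuous, so the pushforward of $\mu$ under this embedding is a weak limit of $\{\hat\Pp^n_{k-1}\}$ on $\hat W_{k-1}\times W_{k-1}$; but by Theorem \ref{thm:BDM-3-2} and the convergence-in-distribution hypothesis on $f_n$ (this is exactly the $G^{k-1}$-part of the statement, which the paper takes from \cite{budhiraja2008large}), that weak limit is uniquely $\hat\Pp^0_{k-1}$, the law of $(\phi^0,X_0)$. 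Since $(\phi^0,X_0)\in W^\gamma_{k-1,id}(\Q)\times W^\gamma_{k-1,0}(\Q)$ by Lemma \ref{lem:regularity-phi-X} and the embedding into $\hat W_{k-1}\times W_{k-1}$ is injective, $\mu$ is the law of $(\phi^0,X_0)$ as a measure on the finer space, i.e.\ $\mu=\Pp^0_{k-1}$. As the subsequential limit is always the same, the full sequence $\Pp^n_{k-1}\to\Pp^0_{k-1}$ weakly, which is the assertion that $\{(\phi^n,X_n)\}$ converges weakly as $C^{k-1}_\gamma$-flows; the $G^{k-1}$-flow convergence is the cited content of Theorem \ref{thm:BDM-3-2}.

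The main obstacle I expect is the uniformity of the moment estimates over the controls $f_n\in\mathcal A_N[l_2]$ together with the correct bookkeeping of H\"older exponents so that the Sobolev embedding constants and the time-regularity exponent $p/2$ in Theorem \ref{thm:Kolmogorov-tightness} both survive simultaneously as $p\to\infty$; this is where the restriction $k\ge4$ (so that $k-1\ge3$ and $C^{3,\beta}$-regularity of the $X^{(l)}$ from \eqref{eq:ass-Xk} suffices) and the freedom in choosing $\eta_\varphi\in(0,\beta]$ versus $\eta_\phi\in(0,1)$ really enters. Everything else is a routine combination of Prokhorov's theorem, continuity of the embedding of the finer into the coarser flow space, and the uniqueness of the limit supplied by \cite{budhiraja2008large}.
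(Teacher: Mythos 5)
Your proposal is correct, but the identification of the limit runs along a genuinely different route than the paper's. The tightness half is the same: the uniform moment bounds from \cite{budhiraja2008large} (Lemmas 4.7--4.9), the Sobolev embedding $W^{k,p}(\U)\hookrightarrow\lambda^{k-1,2\gamma}(\ooU)$ for large $p$, and Theorem \ref{thm:Kolmogorov-tightness} give tightness of the laws in $W_{k-1,id}^{\gamma}(\Q)\times W_{k-1,0}^{\gamma}(\Q)$, which is exactly Lemma \ref{lem:tightness}. For the identification, however, the paper does not pass through the coarser space: it proves a characterization (Theorem \ref{thm:Equiv-weak-diff}, resting on Kunita's Theorem 1.4.5 applied with $S=\lambda^{0,\gamma}(0,T)^2$) saying that tightness plus weak convergence of the $(m+p)$-point motions as $\gamma$-diffusions already yields weak convergence of the flow laws in the H\"older-in-time space, and then verifies the $\gamma$-diffusion convergence directly (Lemmas \ref{lem:gamma-cont-discrete}--\ref{lem:weak-conv-diff}, via Burkholder--Davis--Gundy and the continuity of $(\xi,v)\mapsto\int_0^{\cdot}b_v(s,\xi_s)\,ds$ from \cite{budhiraja2008large}, Prop.~4.6). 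You instead take the known weak convergence as $G^{k-1}$-flows as a black box (note this is Theorem 3.5 of \cite{budhiraja2008large}, not Theorem \ref{thm:BDM-3-2}, which is the LDP) and pin down every Prokhorov subsequential limit on the finer Polish space through a continuous injective embedding; this softer argument dispenses with the point-motion machinery and with Theorem \ref{thm:Equiv-weak-diff} altogether, at the price of leaning entirely on the cited coarse-space convergence, whereas the paper's route re-derives the identification and produces the characterization theorem as a byproduct. Two small repairs are needed in your write-up: the embedding should be taken into $W_{k-1}\times W_{k-1}$ rather than $\hat W_{k-1}\times W_{k-1}$, since elements of $W_{k-1,id}^{\gamma}(\Q)$ need not be diffeomorphisms (the $G^{k-1}$-flow convergence pushes forward to $W_{k-1}\times W_{k-1}$ because $G^{k-1}\hookrightarrow C^{k-1}$ is continuous); and the step ``the pushforwards of $\mu$ and of the law of $(\phi^0,X_0)$ agree, hence $\mu$ is the law of $(\phi^0,X_0)$ on the finer space'' should be justified by noting that a continuous injection between Polish spaces is a Borel isomorphism onto its image (Lusin--Souslin), together with the fact, used also by the paper after \eqref{eq:Def-phi-n}, that $(\phi^0,X_0)$ almost surely takes values in $W_{k-1,id}^{\gamma}(\Q)\times W_{k-1,0}^{\gamma}(\Q)$. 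With these points made explicit, your argument is a valid and somewhat shorter alternative proof of the $C_{\gamma}^{k-1}$ part of the theorem.
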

We postpone the proof of this theorem to Section \ref{sub:Proof-of-Theorem-3-4}.

Let $\Rr^{\infty}:=\prod_{n\in\Nn}\Rr$ be the usual product space.
Then, $\mathS=C([0,T];\Rr^{\infty})$ is a Polish space and $\beta=\left\{ B_{i}\right\} _{i\in\Nn}$
is a random $\mathS$-valued variable (see \cite{budhiraja2008large}).
As shown in \cite{budhiraja2008large}, proofs of theorems in the 
spirit of Theorem \ref{thm:Main-Thm-1} or  
Theorem \ref{thm:LDP-Flow-2} below basically reduce to applications of 
the following result: 
\begin{thm}
\cite[Theorem 3.6]{budhiraja2008large}\label{thm:Budhiraj-abstract}
Let $\mathE$ be a Polish space, let $\left\{ \mathG^{\sigma}\right\} _{\sigma\geq0}$
be a collection of measurable maps from $(\mathS,\banachB(\mathS))$ to $(\mathE,\banachB(\mathE))$
and let $X^{\sigma}=\mathG^{\sigma}(\sqrt{\sigma}\beta)$. Suppose
that there exists a measurable map $\mathG^{0}:\mathS\rightarrow\mathE$
such that for every $N<\infty$ the set $\Gamma_{N}:=\left\{ \mathG^{0}(\int_{0}^{\cdot}u(s)\, ds)\,:\, u\in S_{N}[l_{2}]\right\} $
is a compact subset of $\mathE$. For $f\in\mathE$ let \[
\mathC_{f}:=\left\{ u\in L^{2}(0,T\,;\, l_{2})\,:\, f=\mathG^{0}\left(\int_{0}^{\cdot}u(s)\, ds\right)\right\} .\]
 Then, $\hat{I}$ defined by \[
\hat{I}(f)=\inf_{u\in\mathC_{f}}\left\{ \frac{1}{2}\int_{0}^{T}\norm{u(s)}_{l_{2}}^{2}ds\right\} \,,\qquad f\in\mathE\,,\]
is a good rate function on $\mathE$. Furthermore, suppose that for all
$N<\infty$ and families $\left\{ u^{\sigma}\right\} \subset\mathA_{N}[l_{2}]$
such that $u^{\sigma}$ converges in distribution to some $u\in\mathA_{N}[l_{2}]$,
we have that \[
\mathG^{\sigma}(\sqrt{\sigma}\beta+\int_{0}^{\cdot}u^{\sigma}(s)\, ds)\rightarrow\mathG^{0}(\int_{0}^{\cdot}u(s)\, ds)\]
in distribution as $\sigma\rightarrow0$. Then the family $\left\{ X^{\sigma}\,:\,\sigma>0\right\} $
satisfies the LDP on $\mathE$ with good rate function $\hat{I}$.
\end{thm}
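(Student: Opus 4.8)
This is the abstract Budhiraja--Dupuis--Maroulas sufficient-conditions theorem for large deviations via the weak convergence method (proved in \cite{budhiraja2008large}), and my plan is to prove it along exactly those lines. Since $\mathE$ is Polish and $\hat I$ is (once established to be) a good rate function, the claimed LDP is equivalent to the \emph{Laplace principle}: for every bounded continuous $h:\mathE\to\Rr$,
\[
\lim_{\sigma\to0}\Big(-\sigma\log\Ee\exp\big(-\tfrac1\sigma h(X^{\sigma})\big)\Big)\;=\;\inf_{f\in\mathE}\big\{h(f)+\hat I(f)\big\}.
\]
First I would invoke the Bou\'e--Dupuis variational representation for exponential functionals of the infinite-dimensional Brownian motion $\beta$: applying it to the functional $\omega\mapsto\frac1\sigma h(\mathG^{\sigma}(\sqrt\sigma\omega))$ and rescaling the controls by $\sqrt\sigma$ yields, for each $\sigma>0$,
\[
-\sigma\log\Ee\exp\big(-\tfrac1\sigma h(X^{\sigma})\big)\;=\;\inf_{u\in\mathA[l_{2}]}\Ee\Big[\tfrac12\intOT\norm{u(s)}_{l_{2}}^{2}\,ds+h\big(\mathG^{\sigma}(\sqrt\sigma\beta+{\textstyle\int_{0}^{\cdot}}u(s)\,ds)\big)\Big].
\]
Since $|h|\le M:=\norm h_{\infty}$, the value on the left is in $[-M,M]$, so any control that is nearly optimal in this representation automatically satisfies $\Ee\intOT\norm{u^{\sigma}(s)}_{l_{2}}^{2}\,ds\le C(M)$ uniformly in $\sigma$; by the standard truncation (freezing the control at $0$ once the accumulated energy exceeds a large threshold $N=N(M,\eps)$) one may moreover assume $u^{\sigma}\in\mathA_{N}[l_{2}]$ up to an error $\eps$.

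Next I would record that $\hat I$ is a good rate function, which uses only the compactness hypothesis on the $\Gamma_{N}$. If $\hat I(f)\le m$, there is $u\in\mathC_{f}$ with $\tfrac12\intOT\norm{u(s)}_{l_{2}}^{2}\,ds\le m+1$, hence $f\in\Gamma_{N}$ for $N\ge 2m+2$, so $\{\hat I\le m\}\subset\Gamma_{N}$ is precompact; its closedness follows from the fact that the ball $S_{N}[l_{2}]$ is weakly compact and metrizable and that $u\mapsto\intOT\norm{u(s)}_{l_{2}}^{2}\,ds$ is weakly lower semicontinuous, by extracting from a sequence $f_{n}\to f$ with $\hat I(f_{n})\le m$ near-optimal controls $u_{n}\in S_{N}[l_{2}]$, a weak limit $u$, identifying $\mathG^{0}({\textstyle\int_{0}^{\cdot}}u)=f$ and estimating the energy of $u$ from below.

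For the Laplace \emph{upper bound} $\limsup_{\sigma\to0}\big(-\sigma\log\Ee e^{-h(X^{\sigma})/\sigma}\big)\le\inf_{f}\{h(f)+\hat I(f)\}$ I would argue directly with a \emph{deterministic} control: fix $\eps>0$, choose $f^{\ast}$ with $h(f^{\ast})+\hat I(f^{\ast})\le\inf_{f}\{h+\hat I\}+\eps$ and $u^{\ast}\in\mathC_{f^{\ast}}$ with $\tfrac12\intOT\norm{u^{\ast}(s)}_{l_{2}}^{2}\,ds\le\hat I(f^{\ast})+\eps$; inserting $u^{\ast}$ into the representation bounds the left-hand side above by $\tfrac12\intOT\norm{u^{\ast}(s)}_{l_{2}}^{2}\,ds+\Ee\,h\big(\mathG^{\sigma}(\sqrt\sigma\beta+{\textstyle\int_{0}^{\cdot}}u^{\ast})\big)$, and since $\sqrt\sigma\beta\to0$ and $u^{\ast}\in\mathA_{N}[l_{2}]$ the convergence hypothesis gives $\mathG^{\sigma}(\sqrt\sigma\beta+{\textstyle\int_{0}^{\cdot}}u^{\ast})\to\mathG^{0}({\textstyle\int_{0}^{\cdot}}u^{\ast})=f^{\ast}$ in distribution, so bounded continuity of $h$ produces the limit bound $\tfrac12\intOT\norm{u^{\ast}(s)}_{l_{2}}^{2}\,ds+h(f^{\ast})\le\inf_{f}\{h+\hat I\}+2\eps$.

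The Laplace \emph{lower bound} is where the real work lies, and this is the step I expect to be the main obstacle. For each $\sigma$ pick an $\eps$-optimal control $u^{\sigma}\in\mathA_{N}[l_{2}]$ as above. Because $S_{N}[l_{2}]$ with the weak topology of $L^{2}(0,T;l_{2})$ is a compact Polish space, the laws of $(u^{\sigma},\sqrt\sigma\beta)$ are tight; along a subsequence they converge in distribution to $(u,0)$ with $u\in\mathA_{N}[l_{2}]$, and by the Skorokhod representation theorem this convergence may be realized almost surely on a common probability space. The convergence hypothesis then gives $\mathG^{\sigma}(\sqrt\sigma\beta+{\textstyle\int_{0}^{\cdot}}u^{\sigma})\to\mathG^{0}({\textstyle\int_{0}^{\cdot}}u)$ in distribution, hence (after a further Skorokhod coupling) almost surely, so $h$ of it converges by continuity; combining this with Fatou's lemma and the weak lower semicontinuity of the quadratic cost,
\[
\liminf_{\sigma\to0}\Big(-\sigma\log\Ee e^{-h(X^{\sigma})/\sigma}\Big)\;\ge\;\Ee\Big[\tfrac12\intOT\norm{u(s)}_{l_{2}}^{2}\,ds+h\big(\mathG^{0}({\textstyle\int_{0}^{\cdot}}u)\big)\Big]-\eps\;\ge\;\inf_{f}\{h(f)+\hat I(f)\}-\eps,
\]
where the last inequality uses $u\in\mathC_{\mathG^{0}(\int_{0}^{\cdot}u)}$, so the bracket dominates $\hat I(\mathG^{0}({\textstyle\int_{0}^{\cdot}}u))+h(\mathG^{0}({\textstyle\int_{0}^{\cdot}}u))$. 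Letting $\eps\to0$ in both bounds establishes the Laplace principle and hence the LDP. The subtle points throughout are the a priori $L^{2}$-bound on the controls, the reduction to controls in $\mathA_{N}[l_{2}]$ by truncation, and the justification of interchanging the limit with expectation via the Skorokhod coupling together with lower semicontinuity of the energy functional; this is where essentially all of the effort in \cite{budhiraja2008large} is concentrated.
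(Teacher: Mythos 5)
This theorem is not proved in the paper at all: it is quoted verbatim as \cite[Theorem 3.6]{budhiraja2008large} and used as a black box in the proof of Theorem \ref{thm:LDP-Flow-2}, so there is no internal proof to compare against. Your outline is, in substance, the weak-convergence proof of the cited source: the Bou\'e--Dupuis variational representation for the infinite-dimensional Brownian motion $\beta$, the a priori energy bound and truncation of near-optimal controls into $\mathA_{N}[l_{2}]$, the Laplace upper bound via a deterministic nearly optimal control together with the convergence hypothesis, and the Laplace lower bound via tightness of the controls in $S_{N}[l_{2}]$ with the weak topology, Skorokhod coupling, Fatou, and weak lower semicontinuity of the energy. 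As a sketch this is the right argument and matches how the result is actually established in \cite{budhiraja2008large}.

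One step of your sketch does not follow from the hypotheses as stated. In the goodness-of-$\hat I$ part you take $f_{n}\rightarrow f$ with $\hat I(f_{n})\leq m$, extract near-optimal controls $u_{n}$, pass to a weak limit $u$ in $S_{N}[l_{2}]$, and then ``identify $\mathG^{0}(\int_{0}^{\cdot}u)=f$''. The theorem assumes only that $\mathG^{0}$ is measurable and that each $\Gamma_{N}$ is compact; weak convergence $u_{n}\rightharpoonup u$ does not by itself imply $\mathG^{0}(\int_{0}^{\cdot}u_{n})\rightarrow\mathG^{0}(\int_{0}^{\cdot}u)$, so this identification tacitly uses continuity of $\mathG^{0}$ on $S_{N}[l_{2}]$ --- which is how compactness of $\Gamma_{N}$ is verified in applications (including in the proof of Theorem \ref{thm:LDP-Flow-2}), but is not part of the hypotheses here. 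The gap is easily closed without that continuity: if $\hat I(f_{n})\leq m$ then $f_{n}\in\Gamma_{2(m+\eps)}$ for every $\eps>0$; since $\Gamma_{2(m+\eps)}$ is compact, hence closed, also $f\in\Gamma_{2(m+\eps)}$ and therefore $\hat I(f)\leq m+\eps$ for every $\eps>0$. Thus $\left\{ \hat I\leq m\right\} =\bigcap_{\eps>0}\Gamma_{2(m+\eps)}$ is an intersection of compact sets and hence compact, which gives that $\hat I$ is a good rate function using only the compactness of the sets $\Gamma_{N}$. With that repair, the remaining steps of your outline (representation formula, truncation, and the two Laplace bounds) are the standard argument and are correct as sketched.
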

In order to prove Theorem \ref{thm:Main-Thm-1}, we remark that the Stratonovic and the \Ito flows are related through
\begin{align*}
d\varphi^\sigma_{s,t}(x) & =-X_\sigma(\circ dt,\varphi_{s,t}(x))\\
 & =-\sqrt\sigma \sum_{i=1}^{\infty}X^{(i)}(t,\varphi_{s,t})\,\circ dB_{i}(t)-X^{(0)}(t,\varphi_{s,t})\, dt\\
 & =-\sqrt\sigma \sum_{i=1}^{\infty}X^{(i)}(t,\varphi_{s,t})\, dB_{i}(t)-X^{(0)}(t,\varphi_{s,t})\, dt\\
 &\phantom{=}-\frac{\sqrt\sigma}{2} \sum_{i=1}^{\infty}X^{(i)}(t,\varphi_{s,t})\partial_\varphi X^{(i)}(t,\varphi_{s,t})\, dt\\
&=-X_\sigma(dt,\varphi_{s,t}(x))-\frac{\sqrt\sigma}{2} \sum_{i=1}^{\infty}X^{(i)}(t,\varphi_{s,t})\partial_\varphi X^{(i)}(t,\varphi_{s,t})\, dt
\end{align*}
Thus, $\varphi_{s,t}$ and $\phi_{s,t}$ are exponentially equivalent 
in the space $\lambda^\gamma([0,T];C^{k-1,\beta}(\ooU))$ in the sense 
of \cite{dembo1998large}, Definition 4.2.10 and thus, according to 
\cite{dembo1998large} Theorem 4.2.13, it is enough to show the large 
deviation principle for $\phi^\sigma$. An application of Theorem 
\ref{pro:koenig} and the continuous embedding
$\lambda^{k,\alpha}\hookrightarrow C^{k,\alpha}$ yields that Theorem
\ref{thm:Main-Thm-1} is a direct consequence of the following Theorem:
\begin{thm}
\label{thm:LDP-Flow-2}For any $0<\gamma<\frac{1}{2}$, the family
$(\phi^{\sigma},X_{\sigma})_{\sigma>0}$ satisfies a LDP in the
spaces $\hat{W}_{k-1}(\Q)\times W_{k-1,0}^{\gamma}(\Q)$ and $W_{k-1,id}^{\gamma}(\Q)\times W_{k-1,0}^{\gamma}(\Q)$
with rate function $I_{W}^{\ast}(\phi,X)$ defined in \eqref{eq:LDP-rate-function}. 
\end{thm}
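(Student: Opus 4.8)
The plan is to deduce Theorem \ref{thm:LDP-Flow-2} from the abstract criterion of \cite{budhiraja2008large} recalled in Theorem \ref{thm:Budhiraj-abstract}, applied with $\mathE$ taken successively to be the two target spaces $\hat{W}_{k-1}(\Q)\times W_{k-1,0}^{\gamma}(\Q)$ and $W_{k-1,id}^{\gamma}(\Q)\times W_{k-1,0}^{\gamma}(\Q)$. First I would identify the maps: with $\beta=\{B_i\}_{i\in\Nn}$ the canonical $\mathS=C([0,T];\Rr^\infty)$-valued Brownian motion, the map $\mathG^\sigma$ sends $\sqrt{\sigma}\beta$ to the pair $(\phi^\sigma,X_\sigma)$ solving \eqref{eq:def-ito-flow}, \eqref{eq:general-form-X-sigma}; this is well-defined and measurable into the Hölder-in-time spaces by Lemma \ref{lem:regularity-phi-X}. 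The limiting map $\mathG^0$ sends $\int_0^\cdot u(s)\,ds$ to the controlled pair $(\phi^{0,u},X^{0,u})$ from \eqref{eq:phi-0-f}, \eqref{eq:LDP-X-0-f}; the ODE \eqref{eq:phi-0-f} has a unique solution by Assumption \ref{ass:main-assumption} (the coefficients are Lipschitz in space, uniformly in $t$ and in $u$ on bounded sets in $L^2(0,T;l_2)$, via Cauchy-Schwarz applied to $\sum_l f_l X^{(l)}$). Then the rate function produced by Theorem \ref{thm:Budhiraj-abstract} is exactly $I_W^\ast$ in \eqref{eq:LDP-rate-function}, since $\mathC_f$ there coincides with the constraint set $\{f: (\phi^{0,f},X^{0,f})=(\phi,X)\}$.

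The two hypotheses of Theorem \ref{thm:Budhiraj-abstract} must be verified in the stronger Hölder-in-time topology. For the compactness of $\Gamma_N=\{(\phi^{0,u},X^{0,u}):u\in S_N[l_2]\}$, I would argue as in \cite{budhiraja2008large} but upgrade the conclusion: for $u\in S_N[l_2]$ one has the uniform bound $\|b_u(s,\cdot)\|_{k,2}\le \|a(s,\cdot)\|^{1/2}_{C^{k,\alpha}}\|u(s)\|_{l_2}+\|X^{(0)}(s,\cdot)\|_{k,2}$ by Cauchy-Schwarz and \eqref{eq:est-X-sqare-a}, hence $\int_0^T\|b_u(s,\cdot)\|_{k,2}\,ds$ is bounded uniformly in $u\in S_N[l_2]$ and moreover $t\mapsto X^{0,u}(t,\cdot)$ and $t\mapsto\phi^{0,u}_t$ are Lipschitz into $W^{k,2}(\U)$ with a uniform constant. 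Sobolev embedding $W^{k,2}(\U)\hookrightarrow\lambda^{k-1,\eta}(\ooU)$ (valid here since $n\le 3$ is implicitly the relevant regime, as in the proof of Lemma \ref{lem:regularity-phi-X}, so one should state the embedding as used there) combined with the generalized Arzelà-Ascoli theorem, Theorem \ref{thm:Arzela-Ascoli-h=0000F6lder}, gives precompactness in $\lambda^{0,\gamma}([0,T];C^{k-1,\eta}(\ooU))$ for every $\gamma<\tfrac12$. Closedness of $\Gamma_N$ follows since weak limits of $u_j\in S_N[l_2]$ in $L^2(0,T;l_2)$ pass to the limit in the linear map $u\mapsto X^{0,u}$ and, by the uniform Lipschitz bounds together with a standard ODE stability argument, also in $u\mapsto\phi^{0,u}$; the argument that $\phi^{0,u_j}\to\phi^{0,u}$ when $u_j\rightharpoonup u$ is precisely the content of Theorem \ref{thm:c-k-gamma-convergence} specialized to deterministic controls. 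Hence $\Gamma_N$ is compact in $\mathE$.

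For the second hypothesis — the convergence in distribution $\mathG^\sigma(\sqrt{\sigma}\beta+\int_0^\cdot u^\sigma\,ds)\to\mathG^0(\int_0^\cdot u\,ds)$ whenever $u^\sigma\to u$ in distribution with $u^\sigma\in\mathA_N[l_2]$ — I would observe that by Girsanov's theorem $\mathG^\sigma(\sqrt{\sigma}\beta+\int_0^\cdot u^\sigma\,ds)=(\phi^{\sigma,u^\sigma},X_{\sigma,u^\sigma})$, the controlled flow and field driven by $X_n$ in the notation of \eqref{eq:Def-X-n}, \eqref{eq:Def-phi-n} with $\sigma_n=\sigma$, $f_n=u^\sigma$. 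Its convergence in distribution in $W_{k-1,id}^\gamma(\Q)\times W_{k-1,0}^\gamma(\Q)$ and in $\hat{W}_{k-1}(\Q)\times W_{k-1,0}^\gamma(\Q)$ to $(\phi^{0,u},X^{0,u})$ is exactly Theorem \ref{thm:c-k-gamma-convergence}. Feeding these two verified hypotheses into Theorem \ref{thm:Budhiraj-abstract} yields the LDP for $(\phi^\sigma,X_\sigma)_{\sigma>0}$ on both spaces with good rate function $I_W^\ast$, which is the assertion of Theorem \ref{thm:LDP-Flow-2}.

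The main obstacle is genuinely the upgrade of the tightness/continuity input to the Hölder-in-time setting, i.e. the proof of Theorem \ref{thm:c-k-gamma-convergence}: one must show not only $C([0,T];\cdot)$-tightness of the controlled flows $\phi^{\sigma,u^\sigma}$ but equicontinuity estimates of the form $\Ee\|\phi^{\sigma,u^\sigma}_t-\phi^{\sigma,u^\sigma}_s\|_{k,p}^p\lesssim|t-s|^{p/2}$ uniformly in $\sigma$ and in $u^\sigma\in\mathA_N[l_2]$, which requires controlling the extra drift term $\int_s^t b_{u^\sigma}(r,\phi^{\sigma,u^\sigma}_r)\,dr$ — bounded in $W^{k,2}$-norm by $\int_s^t\|u^\sigma(r)\|_{l_2}\,dr\le N^{1/2}|t-s|^{1/2}$ via Cauchy-Schwarz — together with the martingale part, and then invoking Theorem \ref{thm:Kolmogorov-tightness}. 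Identifying the weak limit as the unique solution of the controlled ODE \eqref{eq:phi-0-f} then uses that the martingale part vanishes as $\sigma\to 0$ and a Skorokhod-representation plus ODE-uniqueness argument; this is the step deferred to Section \ref{sub:Proof-of-Theorem-3-4}, and once it is in hand the present theorem is a formal consequence of Theorem \ref{thm:Budhiraj-abstract} as sketched above.
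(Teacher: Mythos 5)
Your proposal is correct and follows essentially the same route as the paper: reduce to Theorem \ref{thm:Budhiraj-abstract} by taking $\mathG^\sigma$, $\mathG^0$ as you describe, obtain compactness of $\Gamma_N$ from Theorem \ref{thm:c-k-gamma-convergence} with zero noise (weak-to-strong continuity of $f\mapsto(\phi^{0,f},X^{0,f})$ on the weakly compact $S_N[l_2]$), and verify the convergence hypothesis via Girsanov plus Theorem \ref{thm:c-k-gamma-convergence}, deferring that theorem's proof exactly as the paper does. Your additional direct precompactness argument for $\Gamma_N$ (uniform Lipschitz bounds in $W^{k,2}$ plus Sobolev embedding and Theorem \ref{thm:Arzela-Ascoli-h=0000F6lder}) is redundant given the continuity argument — and the stated embedding $W^{k,2}(\U)\hookrightarrow\lambda^{k-1,\eta}(\ooU)$ would need the large-$p$ version as in Lemma \ref{lem:regularity-phi-X} — but this does not affect the correctness of the overall argument.
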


\begin{proof}
We will only show that the family $(\phi^{\sigma},X_{\sigma})_{\sigma>0}$
satisfies a LDP in the spaces $W_{k-1,id}^{\gamma}(\Q)\times W_{k-1,0}^{\gamma}(\Q)$
with rate function $I$ defined as in \eqref{eq:LDP-rate-function}.
The LDP in $\hat{W}_{k-1}\times W_{k-1,0}^{\gamma}(\Q)$ can be
shown in a similar way. The proof follows the proof of
Theorem \ref{thm:BDM-3-2} in \cite{budhiraja2008large}. Our aim
is to reduce the statement of Theorem \ref{thm:LDP-Flow-2} to an
application of Theorem \ref{thm:Budhiraj-abstract}. 

Let $\mathG^{\sigma}:\mathS\rightarrow W_{k-1,id}^{\gamma}(\Q)\times W_{k-1,0}^{\gamma}(\Q)$
be measureable such that $\mathG(\sqrt{\sigma}\beta)=(\phi^{\sigma},X_{\sigma})$ a.s.,
where $(\phi^{\sigma},X_{\sigma})$ are given through 
\eqref{eq:def-ito-flow} and \eqref{eq:general-form-X-sigma}.
Furthermore, we define $\mathG^{0}:\mathS\rightarrow W_{k-1,id}^{\gamma}(\Q)\times W_{k-1,0}^{\gamma}(\Q)$
by $\mathG^{0}(\int_{0}^{\cdot}f(s)ds)=(\phi^{0,f},X^{0,f})$ if
$f\in L^{2}(0,T;l_{2})$, where $(\phi^{0,f},X^{0,f})$ are defined
through \eqref{eq:LDP-X-0-f} and \eqref{eq:phi-0-f}. The mapping
$\mathG^{0}(\cdot)$ is extended by $0$ to the whole of $\mathS$.

In a first step, we consider the set $\Gamma_{N}:=\left\{ \mathG^{0}(\int_{0}^{\cdot}f(s)ds)\,:\, f\in S_{N}[l_{2}]\right\} $
for fixed $N\in\Nn$ and we show that $\Gamma_{N}$ is compact in
$W_{k-1,id}^{\gamma}(\Q)\times W_{k-1,0}^{\gamma}(\Q)$. Since
$S_{N}[l_{2}]$ with the weak topology is a polish space, we have
to show that $f_{n}\rightharpoonup f$ weakly in $S_{N}[l_{2}]$ implies
$\mathG^{0}(\int_{0}^{\cdot}f_{n}(s)ds)\rightarrow\mathG^{0}(\int_{0}^{\cdot}f(s)ds)$
strongly in $W_{k-1,id}^{\gamma}(\Q)\times W_{k-1,0}^{\gamma}(\Q)$.
If we set $\sigma_{n}=0$ in \eqref{eq:Def-X-n}--\eqref{eq:Def-phi-n},
we see that $(\phi^{n},X_{n})=\mathG^{0}(\int_{0}^{\cdot}f_{n}(s)ds)$
and Theorem \ref{thm:c-k-gamma-convergence} yields convergence $(\phi^{n},X_{n})\rightarrow\mathG^{0}(\int_{0}^{\cdot}f(s)ds)$.

Next, let $\left( f_{n}\right) \subset\mathA_{N}[l_{2}]$ converge
to $f\in\mathA_N[l_2]$ weakly in distribution and let $(\sigma_{n})_n$ 
be a sequence of positive numbers such that $\sigma_{n}\rightarrow0$
as $n\rightarrow\infty$. If we can show that \begin{equation}
\mathG^{\sigma_{n}}(\sqrt{\sigma_{n}}\beta+\int_{0}^{\cdot}f_{n}(s)ds)\rightarrow\mathG^{0}(\int_{0}^{\cdot}f(s)ds)\label{eq:conv-girsanov}\end{equation}
in distribution in $W_{k-1,id}^{\gamma}(\Q)\times W_{k-1,0}^{\gamma}(\Q)$ as $n\rightarrow\infty$,
we can apply Theorem \ref{thm:Budhiraj-abstract} in order to conclude
the proof. 

Girsanov's theorem yields that $\sqrt{\sigma_{n}}\beta+\int_{0}^{\cdot}f_{n}(s)ds$
is a Brownian motion w.r.t. our given probability measure and comparing \eqref{eq:general-form-X-sigma}
with \eqref{eq:Def-X-n}--\eqref{eq:Def-phi-n} we see that $\mathG^{\sigma_{n}}(\sqrt{\sigma_{n}}\beta+\int_{0}^{\cdot}f_{n}(s)ds)=(\phi^{n},X_{n})$
with $\left(\phi^{n},X_{n}\right)$ given through \eqref{eq:Def-X-n}--\eqref{eq:Def-phi-n}.
Also we remark once more that $\mathG^{0}(\int_{0}^{\cdot}f(s)ds)=(\phi^{0},X_{0})$
where $(\phi^{0},X_{0})$ are defined through \eqref{eq:Def-X-n}
and \eqref{eq:Def-phi-n}. The convergence \eqref{eq:conv-girsanov} now
follows from Theorem \ref{thm:c-k-gamma-convergence}.
\end{proof}

\subsection{\label{sub:Proof-of-Theorem-3-4}Proof of Theorem \ref{thm:c-k-gamma-convergence}}

The proof of Theorem \ref{thm:c-k-gamma-convergence} follows the
outline of the proof of Theorem 3.5 in \cite{budhiraja2008large}.
In particular, convergence as a $G^{k-1}$-flow was proved in \cite{budhiraja2008large}
and it is sufficient to show convergence as $C_{\gamma}^{k-1}$-flow.

We start with some preparations and generalize the notion of {}``convergence
as diffusions'' to the case of $C^{\gamma}$- regularity in time.
Like in \cite{budhiraja2008large}, let $\boldsymbol{x}=\left(x_{1},x_{2},\dots,x_{m}\right)$
and $\boldsymbol{y}=\left(y_{1},y_{2},\dots,y_{p}\right)$ be arbitrary
points in $\Rr^{d\times m}$ and $\Rr^{d\times p}$, respectively,
and set \begin{align*}
\phi_{t}^{n}(\boldsymbol{x}) & =\left(\phi_{t}^{n}(x_{1}),\phi_{t}^{n}(x_{2}),\dots\phi_{t}^{n}(x_{m})\right)\,,\\
X_{n}(\boldsymbol{y},t) & =\left(X_{n}(y_{1},t),X_{n}(y_{2},t),\dots,X_{n}(y_{p},t)\right)\,,\end{align*}
then $\left\{ \phi_{t}^{n}(\boldsymbol{x}),X_{n}(\boldsymbol{y},t)\right\} $
is a $C^{0,\gamma}$ stochastic process with values in $\Rr^{d\times m}\times\Rr^{d\times p}$
and is equally called $(m+p)$-point motion of the flow. Set $V_{m}^{\gamma}:=C^{0,\gamma}([0,T]\,;\,\Rr^{d\times m})$
and let $V_{m,p}^{\gamma}:=V_{m}^{\gamma}\times V_{p}^{\gamma}$.
\begin{defn}
Let $\gamma<\frac{1}{2}$ and $\Pp_{(\boldsymbol{x},\boldsymbol{y})}^{n}$,
$\Pp_{(\boldsymbol{x},\boldsymbol{y})}^{0}$ be the measures induced
by $\left(\phi^{n}(\boldsymbol{x}),X_{n}(\boldsymbol{y})\right)$
and $\left(\phi^{0}(\boldsymbol{x}),X_{0}(\boldsymbol{y})\right)$,
respectively, on $V_{m,p}^{\gamma}$. The sequence $\left\{ \left(\phi^{n},X_{n}\right)\right\} _{n\in\Nn}$
is said to converge weakly as $\gamma$-diffusions to $\left(\phi^{0},X_{0}\right)$
as $n\rightarrow\infty$ if $\Pp_{(\boldsymbol{x},\boldsymbol{y})}^{n}$
converges weakly to $\Pp_{(\boldsymbol{x},\boldsymbol{y})}^{0}$ as
$n\rightarrow\infty$ for each $(\boldsymbol{x},\boldsymbol{y})\in\Rr^{d\times m}\times\Rr^{d\times p}$,
and $m,p\in\Nn$.
\end{defn}
The next theorem gives a useful characterization of convergence as $C_{\gamma}^{k}$-flows.
\begin{thm}
\label{thm:Equiv-weak-diff}The family of probability measures $\Pp_{k-1}^{n}$
converges weakly to the probability measure $\Pp_{k-1}^{0}$ on $W_{k-1,id}^{\gamma}(\Q)\times W_{k-1,0}^{\gamma}(\Q)$
as $n\rightarrow\infty$ if and only if the following two conditions
hold:
\begin{enumerate}
\item the sequence $\left\{ \left(\phi^{n},X_{n}\right)\right\} _{n\in\Nn}$
converges weakly as $\gamma$-diffusions to $\left(\phi^{0},X_{0}\right)$
as $n\rightarrow\infty$,
\item the sequence $\left\{ \Pp_{k-1}^{n}\right\} _{n\in\Nn}$ is tight.
\end{enumerate}
\end{thm}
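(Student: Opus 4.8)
The plan is to prove the two implications separately, viewing the equivalence as a ``Prokhorov-type'' characterization of weak convergence in the Polish space $W_{k-1,id}^{\gamma}(\Q)\times W_{k-1,0}^{\gamma}(\Q)$. The necessity direction (weak convergence $\Rightarrow$ (1) and (2)) is the easy one. For (1), observe that for fixed $(\boldsymbol{x},\boldsymbol{y})\in\Rr^{d\times m}\times\Rr^{d\times p}$ the evaluation map $(\phi,X)\mapsto(\phi(\boldsymbol{x}),X(\boldsymbol{y}))$ from $W_{k-1,id}^{\gamma}(\Q)\times W_{k-1,0}^{\gamma}(\Q)$ into $V_{m,p}^{\gamma}$ is continuous (it is even a bounded linear restriction composed with point evaluations, which are continuous because $\lambda^{0,\gamma}$-norms dominate sup-norms and modulus-of-continuity seminorms pointwise in space); hence weak convergence of $\Pp_{k-1}^{n}$ pushes forward to weak convergence of $\Pp_{(\boldsymbol{x},\boldsymbol{y})}^{n}$ by the mapping theorem for weak convergence. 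For (2), a weakly convergent sequence of Borel probability measures on a Polish space is tight by Prokhorov's theorem.

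**The nontrivial direction.**
Assume (1) and (2). By tightness and Prokhorov, every subsequence of $\{\Pp_{k-1}^{n}\}$ has a further subsequence converging weakly to some limit measure $\nu$ on $W_{k-1,id}^{\gamma}(\Q)\times W_{k-1,0}^{\gamma}(\Q)$. It suffices to show that every such subsequential limit $\nu$ equals $\Pp_{k-1}^{0}$; then the whole sequence converges to $\Pp_{k-1}^{0}$. To identify $\nu$, we use that the finite-dimensional distributions determine a Borel probability measure on this space of (Hölder-in-time, $C^{k-1}$-in-space) functions: the Borel $\sigma$-algebra of $W_{k-1,id}^{\gamma}(\Q)\times W_{k-1,0}^{\gamma}(\Q)$ is generated by the point evaluations $(\phi,X)\mapsto(\phi(x,t),X(y,t))$, because this is a separable metric space and the evaluation maps separate points. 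Applying the necessity argument to the convergent subsequence, its limit must have the same $(m+p)$-point motion laws as the limits $\Pp_{(\boldsymbol{x},\boldsymbol{y})}^{0}$ supplied by hypothesis (1) — note $(\phi^{0},X_{0})$ is deterministic here, but the argument is the same. Since $\Pp_{k-1}^{0}$ and $\nu$ agree on the generating algebra of cylinder sets, they coincide, which finishes the proof.

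**Main obstacle.**
The delicate point is the measurability/generation statement: one must verify that on $W_{k-1,id}^{\gamma}(\Q)\times W_{k-1,0}^{\gamma}(\Q)$ the Borel $\sigma$-algebra is indeed generated by the countable family of evaluations at rational space-time points, so that equality of finite-dimensional distributions forces equality of measures. This is where separability is essential and is the reason the paper introduced the little-Hölder spaces $\lambda^{m,2\gamma}$ and $\lambda^{0,\gamma}$ in \eqref{eq:def-Wm-Rr-n}--\eqref{eq:def-Wm-Rr-n2} rather than working in the nonseparable spaces $C^{0,\gamma}$: by Remark \ref{rem:hoelder-imbeddings} these $\lambda$-spaces are separable, hence so is the product space here, so its Borel $\sigma$-algebra is countably generated and the cylinder sets built from a countable dense set of evaluation functionals generate it. Once this is in place, the Prokhorov/subsequence argument goes through routinely, and one can also replace it by the standard fact that a tight sequence with convergent finite-dimensional distributions converges weakly. (This is exactly the structure of the proof of the corresponding statement in \cite{budhiraja2008large} for the $G^{k-1}$-flow case, adapted to the parabolic Hölder setting, so the transcription is largely bookkeeping once the separability point is recorded.)
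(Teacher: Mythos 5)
Your proposal is correct and follows the same overall skeleton as the paper (easy direction via the mapping theorem and the converse of Prokhorov; hard direction via tightness, subsequential limits, and identification of every limit point through the $(m+p)$-point motions), but you execute the identification step by a genuinely different device. The paper pushes each subsequential limit forward under the continuous embedding $W_{k-1,id}^{\gamma}(\Q)\times W_{k-1,0}^{\gamma}(\Q)\hookrightarrow\cV=\lambda^{0,\gamma}(0,T;C(\ooU))^{2}$ and applies Kunita's Theorem~1.4.5 there, viewing the flows as continuous random fields on $\ooU$ with values in $\lambda^{0,\gamma}(0,T)^{2}$, so that convergence as $\gamma$-diffusions pins down the limit law in $\cV$ and hence (implicitly, via injectivity of the embedding) in the original space. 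You instead stay in the strong space and argue that countably many space-time evaluations generate its Borel $\sigma$-algebra, so equality of finite-dimensional distributions, transferred from hypothesis (1) by continuity of the evaluation maps into $V_{m,p}^{\gamma}$, forces each subsequential limit to equal $\Pp_{k-1}^{0}$; this is more self-contained (no appeal to Kunita) and makes explicit the lifting step the paper leaves tacit. Both routes hinge on the separability of the little-H\"older spaces, which you correctly identify as the crucial point. One caveat: your stated justification of the generation claim, separability plus point separation by continuous maps, is not sufficient in that generality; you should either invoke the standard Borel-space fact that a countable separating family of Borel maps on a Polish space generates the Borel $\sigma$-algebra (noting that $W_{k-1,id}^{\gamma}(\Q)\times W_{k-1,0}^{\gamma}(\Q)$ is Polish, being a closed affine subset of a separable Banach space), or argue directly that the relevant norms of differences are countable suprema over rational space-time points of evaluation-measurable quantities (spatial derivatives being limits of difference quotients), so that norm balls around a countable dense set already lie in the evaluation $\sigma$-algebra.
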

\begin{proof}
Clearly, if $\Pp_{k-1}^n\rightarrow\Pp_{k-1}$ weakly as measures (1) and (2) hold. We thus only have to show the inverse implication.

Since $\left\{ \Pp_{k-1}^{n}\right\} _{n\in\Nn}$ is tight, we find
convergence of a subsequence $\left\{ \Pp_{k-1}^{n_{m}}\right\} _{m\in\Nn}$
to a measure $\tilde{\Pp}$ on $W_{k-1,id}^{\gamma}(\Q)\times W_{k-1,0}^{\gamma}(\Q)$.
Since the embedding $W_{k-1,id}^{\gamma}(\Q)\times W_{k-1,0}^{\gamma}(\Q)\hookrightarrow\cV$ 
with $\cV:=\lambda^{0,\gamma}(0,T;C(\overline{U}))\times\lambda^{0,\gamma}(0,T;C(\overline{U}))$
is continuous, $\left\{ \Pp_{k-1}^{n_{m}}\right\} _{m\in\Nn}$ converges 
weakly as measures to $\tilde{\Pp}$ in $\cV$.

With the notation introduced in \cite{Kuni97} right before the 
statement of Theorem 1.4.5, setting 
$S=\lambda^{0,\gamma}(0,T)^{2}$ and $\mathbb{I}=\overline{U}$ 
we can apply Theorem 1.4.5 of \cite{Kuni97} to get convergence of
$\left\{ \Pp_{k-1}^{n_{m}}\right\} _{m\in\Nn}$ to $\Pp_{k-1}^{0}$
on $\cV$, thus $\Pp_{k-1}^{0}=\tilde{\Pp}$. Since this identification holds
for any converging subsequence, the theorem is proved. 
\end{proof}
Thus, it remains to prove that $\Pp_{k-1}^{n}$ satisfies (1) and (2) 
of Theorem \ref{thm:Equiv-weak-diff}. We start with a proof of (2).
\begin{lem}
\label{lem:tightness} The sequence $\left\{ \Pp_{k-1}^{n}\right\} _{n\in\Nn}$
is tight.\end{lem}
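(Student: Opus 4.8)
The plan is to verify the hypotheses of the Kolmogorov-type tightness criterion, Theorem \ref{thm:Kolmogorov-tightness}, applied to the sequence of random fields $\psi_n := (\phi^n, X_n)$ taking values in $C([0,T];\banachB)$ with $\banachB := W^{k,p}(\U;\R^n)^2$ and $\banachB' := C^{k-1,\eta}(\ooU)^2$ for a suitable Hölder exponent $\eta$. First I would recall, exactly as in the proof of Lemma \ref{lem:regularity-phi-X}, that the estimates from \cite{budhiraja2008large}, Lemmas 4.7--4.9, hold \emph{uniformly} in $n$: because the controls $f_n$ range over the bounded set $\mathA_N[l_2]$ and $\sigma_n \le \sigma_0$ is bounded, for every $p>1$ there is $C_p$, \emph{independent of $n$}, such that
\begin{align*}
  \sup_n \Ee \norm{\phi^n_t - \phi^n_s}_{k,p}^p &\le C_p |t-s|^{p/2}, &
  \sup_n \Ee \norm{X_n(t,\cdot) - X_n(s,\cdot)}_{k,p}^p &\le C_p |t-s|^{p/2},
\end{align*}
and, using the initial conditions $\phi^n_0 = \mathrm{id}$, $X_n(0,\cdot)=0$,
\[
  \sup_n \Ee \norm{\phi^n_t}_{k,p}^p + \sup_n \Ee \norm{X_n(t,\cdot)}_{k,p}^p \le C_p .
\]
The only point needing a word of care here is the drift terms $\int_0^t b_{f_n}(s,\cdot)\,ds$: since $\int_0^T \norm{f_n(s)}_{l_2}^2\,ds \le N$ and the $X^{(l)}$ lie in $L^2(0,T;C^{k,\beta}(\ooU))$ by \eqref{eq:ass-Xk} together with Assumption \ref{ass:main-assumption}, the Cauchy--Schwarz inequality gives $\norm{\int_s^t b_{f_n}(r,\cdot)\,dr}_{k,p} \lesssim (N+1)^{1/2}|t-s|^{1/2}$ uniformly in $n$, so the drift contributes the required $|t-s|^{p/2}$ bound with an $n$-independent constant; the stochastic (martingale) part is handled exactly as in \cite{budhiraja2008large} via Burkholder--Davis--Gundy, again uniformly because the noise coefficients are $\sigma_n$-scaled with $\sigma_n$ bounded.

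With these two moment bounds in hand, Theorem \ref{thm:Kolmogorov-tightness} applies directly: for the compact embedding $\banachB \hookrightarrow \banachB'$ I invoke the Sobolev embedding $W^{k,p}(\U) \hookrightarrow C^{k-1,\gamma'}(\ooU)$ (valid for $\tfrac{3}{p} < 1-\gamma'$, see \cite{Adams1975}) followed by the compact embedding $C^{k-1,\gamma'}(\ooU) \hookrightarrow C^{k-1,\eta}(\ooU)$ for $\eta < \gamma'$, and $\banachB'$ is separable. Thus $\{\psi_n\}$ is tight in $\lambda^{0,\delta}([0,T];\banachB')$ for every $\delta < \tfrac12 - \tfrac1p$; choosing $p$ large enough (so that both $\delta > \gamma$ and $\gamma' > 2\gamma$ can be arranged) and recalling the definition \eqref{eq:def-Wm-Rr-n2} of $W_{k-1,id}^\gamma(\Q) \times W_{k-1,0}^\gamma(\Q) = \lambda^{\gamma}([0,T];\lambda_{id}^{k-1,2\gamma}(\ooU)) \times \lambda^{\gamma}([0,T];\lambda_{0}^{k-1,2\gamma}(\ooU))$, we obtain tightness of $\{\Pp_{k-1}^n\}$ in the target space. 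The boundary values ($\phi^n_t|_{\partial\U}=\mathrm{id}$, $X_n(t,\cdot)|_{\partial\U}=0$) and the little-Hölder membership are preserved under the limit since these are closed conditions, using Assumption \ref{ass:main-assumption} and \cite{Kuni97}, Theorem 4.6.5.

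The main obstacle is purely bookkeeping rather than conceptual: one must make sure that all the estimates borrowed from \cite{budhiraja2008large} genuinely hold with constants independent of $n$ — i.e. that the family of controls $\{f_n\} \subset \mathA_N[l_2]$ enters only through the bound $N$ and the $\sigma_n$-prefactor of the martingale part — and that the chain of embeddings $W^{k,p} \hookrightarrow C^{k-1,\gamma'} \hookrightarrow C^{k-1,2\gamma}$ together with the temporal Hölder gain $\delta < \tfrac12 - \tfrac1p > \gamma$ can be closed simultaneously by taking $p$ sufficiently large. Neither presents a real difficulty, so the lemma follows.
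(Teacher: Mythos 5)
Your argument is correct and is essentially the paper's proof: the paper likewise invokes the uniform-in-$n$ moment bounds of \cite{budhiraja2008large} (as already used in Lemma \ref{lem:regularity-phi-X}), the compact embedding $W^{k,p}(\U)\hookrightarrow\lambda^{k-1,2\gamma}(\ooU)$ for $p$ large, and Theorem \ref{thm:Kolmogorov-tightness} to conclude tightness in $W_{k-1,id}^{\gamma}(\Q)\times W_{k-1,0}^{\gamma}(\Q)$. The only adjustment needed is to take $\banachB'=\lambda^{k-1,2\gamma}(\ooU)^2$ (a little-H\"older space, hence separable, as required by Theorem \ref{thm:Kolmogorov-tightness}) rather than $C^{k-1,\eta}(\ooU)^2$, which is not separable; this is exactly how the paper phrases the embedding and it matches the target space from \eqref{eq:def-Wm-Rr-n2}.
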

\begin{proof}
With the notation of Lemma \ref{lem:regularity-phi-X}, for each $p>1$
there exists $C_{p}$ such that \begin{align*}
\sup_{n}\Ee\norm{\phi_{t}^{n}-\phi_{s}^{n}}_{k,p}^{p} & \leq C_{p}\left|t-s\right|^{p/2}\\
\sup_{n}\Ee\norm{X_{n}(t,\cdot)-X_{n}(s,\cdot)}_{k,p}^{p} & \leq C_{p}\left|t-s\right|^{p/2}
\end{align*}
\[\sup_{n}\Ee\norm{\phi_{t}^{n}}_{k,p}^{p}+\sup_{n}\Ee\norm{X_{n}(t,\cdot)}_{k,p}^{p}\leq C_{p}\,.\]
By the compact embedding $W^{k,p}(\U)\hookrightarrow\lambda^{k-1,2\gamma}(\U)$
for large $p$, applying Theorem \ref{thm:Kolmogorov-tightness} yields
tightness of $\left\{ \left(\phi^{n},X_{n}\right)\right\} _{n\in\Nn}$
in $W_{k-1,id}^{\gamma}(\Q)\times W_{k-1,0}^{\gamma}(\Q)$.
\end{proof}
The first condition of Theorem \ref{thm:Equiv-weak-diff} will be
verified in the following three Lemmas. 
\begin{lem}
\label{lem:gamma-cont-discrete}For all $\gamma<\frac{1}{2}$, $x,y\in\U$
\begin{equation}
\Ee\norm{\int_{0}^{t}M(ds,y)}_{C^{0,\gamma}([0,T])}+\Ee\norm{\int_{0}^{t}M(ds,\phi_{s}^{n}(x))}_{C^{0,\gamma}([0,T])}\leq C(\gamma)\,,\label{eq:lem-gamma-cont-discr}\end{equation}
where $C(\gamma)$ does not depend on $x,\, y$ or $n$.\end{lem}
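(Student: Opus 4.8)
The plan is to recognise both random functions in \eqref{eq:lem-gamma-cont-discr} as continuous martingales whose quadratic variations have a uniformly bounded integrand, and then to combine the Burkholder--Davis--Gundy inequality with Lemma \ref{lem:Hairer-Maas-Weber}.

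First I would rewrite the two quantities in martingale form. Since $M(\cdot,y)=\sum_{i=1}^{\infty}\int_{0}^{\cdot}X^{(i)}(s,y)\,dB_{i}(s)$ is itself a continuous $\Rr^{n}$-valued martingale vanishing at $t=0$, one has $\int_{0}^{t}M(ds,y)=M(t,y)$; and, by Kunita's theory of integration along a curve, $\int_{0}^{t}M(ds,\phi_{s}^{n}(x))=\sum_{i=1}^{\infty}\int_{0}^{t}X^{(i)}(s,\phi_{s}^{n}(x))\,dB_{i}(s)$ is again a continuous martingale (its integrand is adapted because $s\mapsto\phi_{s}^{n}(x)$ is), with quadratic variation $\int_{0}^{t}\sum_{i=1}^{\infty}|X^{(i)}(r,\phi_{r}^{n}(x))|^{2}\,dr$. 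The key point is that in both cases this quadratic variation is of the form $\int_{0}^{t}c(r)\,dr$ with $0\le c(r)\le\Lambda_{0}$, where
\[
	\Lambda_{0}:=\sup_{t\in[0,T]}\ \sup_{z\in\ooU}\ \sum_{i=1}^{\infty}|X^{(i)}(t,z)|^{2}<\infty .
\]
Finiteness of $\Lambda_{0}$ comes from Assumption \ref{ass:main-assumption} together with \eqref{eq:variance-X}: one has $\sum_{i}|X^{(i)}(t,z)|^{2}=\mathrm{tr}\,a(t,z,z)$, and $\sup_{t\in[0,T]}\norm{a(t,\cdot)}_{C_{id}^{k,\alpha}(\U)}<\infty$ because $a$ is continuous in time with values in $C^{k,\alpha}$; note that $\Lambda_{0}$ does not depend on $x$, $y$ or $n$.

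Next, for a fixed $p\ge2$ and any $0\le s\le t\le T$, the Burkholder--Davis--Gundy inequality, applied to either of the martingales $f$ above, gives $\Ee\,|f(t)-f(s)|^{p}\le C_{p}\,\Ee\big(\int_{s}^{t}c(r)\,dr\big)^{p/2}\le C_{p}\Lambda_{0}^{p/2}\,|t-s|^{p/2}$ and $\Ee\sup_{t\in[0,T]}|f(t)|^{p}\le C_{p}\Lambda_{0}^{p/2}T^{p/2}$, with constants independent of $x$, $y$ and $n$. Given $\gamma<\tfrac12$, I would then choose $p>2$ so large that $\gamma<\tfrac12-\tfrac1p$, and apply Lemma \ref{lem:Hairer-Maas-Weber} with $\banachB=\Rr^{n}$ and $q=1$; this produces $\Ee\norm{f}_{C^{0,\gamma}([0,T])}^{p}\le C_{\gamma}(\Lambda_{0}^{p/2}T^{p/2}+1)$ for each of the two martingales $f$. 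Estimate \eqref{eq:lem-gamma-cont-discr} then follows from Jensen's inequality applied to each summand, with a constant $C(\gamma)=\big(2C_{\gamma}(\Lambda_{0}^{p/2}T^{p/2}+1)\big)^{1/p}$ that depends only on $\gamma$ (through $p$) and on the fixed data $\Lambda_{0}$, $T$ --- in particular not on $x$, $y$ or $n$.

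The genuinely delicate points are the uniform bound $\Lambda_{0}<\infty$ and the rigorous treatment of the infinite series of stochastic integrals and of the integral along the random curve $s\mapsto\phi_{s}^{n}(x)$; both rest on Assumption \ref{ass:main-assumption} and on Kunita's framework in \cite{Kuni97}. Once these are in place, the remainder is a routine combination of Burkholder--Davis--Gundy and Lemma \ref{lem:Hairer-Maas-Weber}.
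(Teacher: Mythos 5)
Your proof is correct and takes essentially the same route as the paper's: Burkholder--Davis--Gundy applied to the martingales $M(t,y)$ and $\int_0^t M(ds,\phi^n_s(x))$, a uniform bound on $\sum_i|X^{(i)}|^2$ via the local characteristic $a$, and then Lemma \ref{lem:Hairer-Maas-Weber}. Your instantiation with $q=1$, $p$ large, together with the explicit sup-moment bound and the final Jensen step, is merely a slightly streamlined version of the paper's argument with general $p,q>1$.
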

\begin{proof}
By the Burkholder-Davis-Gundy inequality we find for any $p,q>1$:
\begin{align*}
\Ee\left|\int_{t_{0}}^{t_{1}}M(ds,\phi_{s}^{n}(x))\right|^{p} & \leq\Ee\left|\sup_{t_{0}\leq t\leq t_{1}}\int_{t_{0}}^{t}M(ds,\phi_{s}^{n}(x))\right|^{p}\\
 & \stackrel{\scriptstyle\eqref{eq:est-X-sqare-a}}{\leq} C_{p}\Ee\left|\sum_{k=1}^{\infty}\int_{t_{0}}^{t_{1}}\left|X^{(k)}\right|^2(s,\phi_{s}^{n}(x))\, ds\right|^{\frac{p}{2}}\\
 & \leq C_{p,q}\left(\norm a_{C([0,T];C^{k,\alpha}(\ooU))}\right)^{p/2}(t_{1}-t_{0})^{\frac{p}{2q}}\,,\end{align*}
where we used $\left|\sup_{t}\left|a(t)\right|^{q}\right|^{\frac{1}{q}}=\sup_{t}\left|a(t)\right|$
and $a\in C([0,T];C^{0,\alpha}(\ooU))$. From Lemma \ref{lem:Hairer-Maas-Weber},
we get $t\mapsto\int_{0}^{t}M(ds,\phi_{s}^{n}(x))\in C^{0,\gamma}([0,T])$
a.s. for all $\gamma<\frac{1}{2q}-\frac{1}{p}$ and since $p>1$ and
$q>1$ were arbitrary, the claim follows.\end{proof}
\begin{lem}
\label{lem:tightness-fixed-x}For each $x\in\Rr^{d}$ and each $\gamma<\frac{1}{2}$,
the sequence $\{h_n\}_{n\in\N}$, $h_n(t)=\left(\phi_{t}^{n}(x),X_{n}(t,x)\right)$
is tight in $C^{\gamma}([0,T]\,;\,\Rr^{d}\times\Rr^{d})$.\end{lem}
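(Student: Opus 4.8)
The plan is to reduce the statement to a pair of uniform moment estimates and then run a Kolmogorov--Chentsov type argument. Recall that $h_{n}=(\phi^{n}_{\cdot}(x),X_{n}(\cdot,x))$ with $\phi^{n}_{t}(x)$ and $X_{n}(t,x)$ given by \eqref{eq:Def-phi-n} and \eqref{eq:Def-X-n} (these are well defined by the construction of the \Ito flow and Girsanov's theorem, cf.\ Lemma~\ref{lem:regularity-phi-X}). First I would show that for every $p>1$ there is a constant $C_{p}$, independent of $n$ and of $x$, with
\[
	\Ee\,\norm{h_{n}(t)-h_{n}(s)}^{p} \,\le\, C_{p}\,|t-s|^{p/2}
	\qquad\text{and}\qquad
	\Ee\,\norm{h_{n}}_{L^{\infty}([0,T];\Rr^{d}\times\Rr^{d})}^{p} \,\le\, C_{p}
\]
for all $s,t\in[0,T]$. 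Granting this, Lemma~\ref{lem:Hairer-Maas-Weber} (with $\banachB=\Rr^{d}\times\Rr^{d}$ and $q=1$) yields, for each $p$, the uniform bound $\sup_{n}\Ee\,\norm{h_{n}}_{C^{0,\alpha}([0,T];\Rr^{d}\times\Rr^{d})}^{p}\le C_{\alpha,p}$ for every $\alpha<\tfrac12-\tfrac1p$. Fixing $p$ so large that $\tfrac12-\tfrac1p>\gamma$ and applying Chebyshev's inequality, for each $\eps>0$ there is a radius $R$ with $\Pp(\norm{h_{n}}_{C^{0,\alpha}}>R)<\eps$ uniformly in $n$; since bounded subsets of $C^{0,\alpha}([0,T];\Rr^{d}\times\Rr^{d})$ are precompact in $C^{0,\gamma}([0,T];\Rr^{d}\times\Rr^{d})$ for $\gamma<\alpha$ (the target being finite dimensional, this is the classical Arzel\`a--Ascoli theorem together with interpolation of H\"older norms, exactly as in the proof of Theorem~\ref{thm:Arzela-Ascoli-h=0000F6lder}), this is precisely tightness of $\{h_{n}\}_{n\in\Nn}$ in $C^{0,\gamma}([0,T];\Rr^{d}\times\Rr^{d})$.

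It remains to establish the two moment bounds, and I would do so by splitting each component of $h_{n}$ into its drift and martingale parts, exactly as in the proofs of Lemmas~\ref{lem:regularity-phi-X} and~\ref{lem:gamma-cont-discrete}. By Assumption~\ref{ass:main-assumption} and \eqref{eq:variance-X}, the constants $C_{a}:=\sup_{r\in[0,T],\,y\in\ooU}\sum_{i\ge1}|X^{(i)}(r,y)|^{2}$ and $C_{0}:=\sup_{r,y}|X^{(0)}(r,y)|$ are finite (the first because $\sum_{i}|X^{(i)}(r,y)|^{2}=\operatorname{tr}\bigl(a(r,y,y)\bigr)$ and $a$ is bounded, the second because $b=X^{(0)}$ is bounded). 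Cauchy--Schwarz in $l_{2}$ together with $f_{n}\in\mathA_{N}[l_{2}]$, hence $\int_{0}^{T}\norm{f_{n}(r)}_{l_{2}}^{2}\,dr\le N$ almost surely, then gives the deterministic estimate
\[
	\Bigl|\int_{s}^{t} b_{f_{n}}(r,z_{r})\,dr\Bigr|
	\,\le\, \sqrt{C_{a}}\int_{s}^{t}\norm{f_{n}(r)}_{l_{2}}\,dr + C_{0}(t-s)
	\,\le\, \sqrt{C_{a}N}\,(t-s)^{1/2} + C_{0}(t-s)
\]
for either choice $z_{r}=x$ or $z_{r}=\phi^{n}_{r}(x)$, since $C_{a}$ is uniform in the spatial variable. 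For the martingale parts $\sqrt{\sigma_{n}}\int_{s}^{t}M(dr,x)$ and $\sqrt{\sigma_{n}}\int_{s}^{t}M(dr,\phi^{n}_{r}(x))$, the Burkholder--Davis--Gundy inequality — applied exactly as in Lemma~\ref{lem:gamma-cont-discrete}, using that $\int_{s}^{t}\sum_{i}|X^{(i)}(r,y)|^{2}\,dr\le C_{a}(t-s)$ controls the quadratic variation of $M(\cdot,y)$ — gives
\[
	\Ee\,\Bigl|\sqrt{\sigma_{n}}\int_{s}^{t} M(dr,\phi^{n}_{r}(x))\Bigr|^{p}
	\,\le\, \sigma_{n}^{p/2}\,C_{p}\,\Ee\Bigl(\int_{s}^{t}\sum_{i}\bigl|X^{(i)}(r,\phi^{n}_{r}(x))\bigr|^{2}\,dr\Bigr)^{p/2}
	\,\le\, C_{p}\,C_{a}^{p/2}\,(t-s)^{p/2},
\]
using that $(\sigma_{n})_{n}$ is bounded, and the analogous bound for the $X_{n}$-martingale. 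Summing the four contributions proves the increment estimate; placing $\sup_{t\in[0,T]}$ inside the expectation (which BDG accommodates for the martingale terms, while the drift terms are deterministically bounded on $[0,T]$) and using $h_{n}(0)=(x,0)$ yields the $L^{\infty}$-bound, so both hypotheses of Lemma~\ref{lem:Hairer-Maas-Weber} are verified.

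The only real obstacle is bookkeeping: every constant must be uniform in $n$ (and in $x$), which rests precisely on the a priori control $f_{n}\in\mathA_{N}[l_{2}]$ and on the uniform-in-space, time-integrable bound for $\sum_{i}|X^{(i)}|^{2}$ from Assumption~\ref{ass:main-assumption}; and one must treat the stochastic integral $\int_{s}^{t}M(dr,\phi^{n}_{r}(x))$ along the \emph{random} path $r\mapsto\phi^{n}_{r}(x)$ via Burkholder--Davis--Gundy as in Lemma~\ref{lem:gamma-cont-discrete}, rather than naively. No idea beyond those already used in Lemmas~\ref{lem:regularity-phi-X} and~\ref{lem:gamma-cont-discrete} is needed; Lemma~\ref{lem:tightness-fixed-x} is essentially their one-point restriction.
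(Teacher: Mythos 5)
Your proof is correct, and its core is the same as the paper's: the same drift/martingale decomposition of \eqref{eq:Def-X-n}--\eqref{eq:Def-phi-n}, the same Cauchy--Schwarz estimate on $b_{f_n}$ using $\int_0^T\norm{f_n}_{l_2}^2\,ds\le N$ a.s., and the same Burkholder--Davis--Gundy bound with quadratic variation controlled by $a$. Where you diverge is in how tightness is concluded. The paper treats only $\phi^n(x)$ explicitly and handles the two parts separately: the martingale part is disposed of by Lemma \ref{lem:gamma-cont-discrete} together with Chebyshev and the factor $\sqrt{\sigma_n}\to 0$ (so it converges to zero in probability in $C^{0,\gamma}$), while the drift part goes through the Kolmogorov-type tightness criterion of Theorem \ref{thm:Kolmogorov-tightness}. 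You instead prove uniform $p$-th moment increment bounds of order $|t-s|^{p/2}$ for the full two-component process $h_n$, feed them into Lemma \ref{lem:Hairer-Maas-Weber} to get $\sup_n\Ee\norm{h_n}_{C^{0,\alpha}}^p<\infty$ for every $\alpha<\tfrac12-\tfrac1p$, and conclude by Chebyshev plus compactness of $C^{0,\alpha}$-balls in $C^{0,\gamma}$. This buys a slightly cleaner and more robust statement: you never use $\sigma_n\to0$, only boundedness of $(\sigma_n)$, and both components are covered at once; the paper's route, by contrast, reuses its already-proved Lemma \ref{lem:gamma-cont-discrete} and Theorem \ref{thm:Kolmogorov-tightness} and so requires no new moment bookkeeping. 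One cosmetic caveat: your $L^\infty$-moment bound for $h_n$ does depend on $|x|$ through $h_n(0)=(x,0)$, which is harmless here since $x$ is fixed in the statement, but the blanket claim of $x$-independence should be restricted to the increment estimate.
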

\begin{proof}
We will only show tightness of $\phi^{n}(x)$. By Chebyshev's inequality,
Lemma \ref{lem:gamma-cont-discrete} yields \[
\lim_{n\rightarrow\infty}P\left(\norm{t\mapsto\sqrt{\sigma_{n}}\int_{0}^{t}M(ds,\phi_{s}^{n}(x))}_{C^{0,\gamma}}>\eps\right)=0\qquad\forall\gamma<\frac{1}{2}\,,\quad\forall\eps>0\]
and by the compact imbedding $C^{0,\gamma_{1}}\hookrightarrow C^{0,\gamma_{2}}$
for $\gamma_{1}>\gamma_{2}$, the sequence 
$\left(t\mapsto\sqrt{\sigma_{n}}\int_{0}^{t}M(ds,\phi_{s}^{n}(x))\right)$
is tight in $C^{\gamma}([0,T];\Rr^{d})$ for all $\gamma<\frac{1}{2}$.
Thus, it remains to show tightness of the sequence 
$t\mapsto\left( \int_{0}^{t}b_{f_{n}}(s,\phi_{s}^{n}(x))ds\right) _{n\in\Nn}$.
First, due to the uniform bound on $f_{n}$ and the inequality
(3.4) of \cite{budhiraja2008large}, we find \[
\int_{0}^{T}\left|b_{f_{n}}(s,\phi_{s}^{n}(x))\right|^{2}ds\leq C\left(\norm a_{C([0,T];C^{0,\alpha}(U))}+\norm{X^{(0)}}_{C([0,T];C^{0,\alpha}(U))}^{2}\right)\,.\]
Thus an application
of Hölder's inequality yields\[
\Ee\left|\int_{t_{1}}^{t_{2}}b_{f_{n}}(s,\phi_{s}^{n}(x))\, ds\right|^{p}\leq\Ee\left|\int_{t_{1}}^{t_{2}}\left|b_{f_{n}}(s,\phi_{s}^{n}(x))\right|^{2}ds\right|^{\frac{p}{2}}(t_{1}-t_{2})^{\frac{p}{2}}\,.\]
From the last two inequalities in combination with Theorem \ref{thm:Kolmogorov-tightness},
we get that $t\mapsto\int_{0}^{t}b_{f_{n}}(s,\phi_{s}^{n}(x))ds$
is tight in $\lambda^{0,\gamma}([0,T];\Rr^{d}\times\Rr^{d})$ and
thus also in $C^{0,\gamma}([0,T];\Rr^{d}\times\Rr^{d})$ for all $\gamma<\frac{1}{2}$. \end{proof}
\begin{lem}
\label{lem:weak-conv-diff}Assume $f_{n}\rightarrow f$ in distribution as $S_{N}[l_{2}]$-valued
random variables. Then the sequence $\left\{ \left(\phi^{n},X_{n}\right)\right\} _{n\in\Nn}$
converges weakly as $\gamma$-diffusions to $\left(\phi^{0},X_{0}\right)$
as $n\rightarrow\infty$.\end{lem}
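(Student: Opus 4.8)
The plan is to reduce the convergence of the full $(m+p)$-point motions to the convergence of finitely many one-point motions already handled in Lemma \ref{lem:tightness-fixed-x}, together with an identification of the limit. First I would note that, by the Skorokhod representation theorem, since $f_n\to f$ in distribution as $S_N[l_2]$-valued random variables, we may pass to a probability space on which $f_n\to f$ almost surely in the weak topology of $S_N[l_2]$. By Lemma \ref{lem:tightness-fixed-x} (applied coordinatewise to each $x_1,\dots,x_m$ and each $y_1,\dots,y_p$) together with the product structure, the laws $\Pp_{(\boldsymbol{x},\boldsymbol{y})}^n$ are tight in $V_{m,p}^\gamma$ for every $\gamma<\tfrac12$, so it suffices to show that every weak limit point equals $\Pp_{(\boldsymbol{x},\boldsymbol{y})}^0$. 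Again using Skorokhod representation, along a subsequence we obtain almost sure convergence $(\phi^{n}(\boldsymbol{x}),X_n(\boldsymbol{y}))\to(\Psi(\boldsymbol{x}),\Xi(\boldsymbol{y}))$ in $V_{m,p}^\gamma$ for some limiting process, and the task becomes to identify $(\Psi,\Xi)$ with $(\phi^{0}(\boldsymbol{x}),X_0(\boldsymbol{y}))$.

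The identification of $\Xi$ is the easier half: writing $X_n(\boldsymbol y,t)=\int_0^t b_{f_n}(s,\boldsymbol y)\,ds+\sqrt{\sigma_n}M(\boldsymbol y,t)$, the martingale part tends to zero uniformly (in fact in $C^{0,\gamma}$, by Lemma \ref{lem:gamma-cont-discrete} and Chebyshev), while $b_{f_n}(s,\boldsymbol y)=\sum_l f_n^l(s)X^{(l)}(s,\boldsymbol y)+X^{(0)}(s,\boldsymbol y)$ converges: since $f_n\rightharpoonup f$ in $L^2(0,T;l_2)$ and $(X^{(l)}(\cdot,\boldsymbol y))_l\in L^2(0,T;l_2)$ by \eqref{eq:est-X-sqare-a}, one has $\int_0^t b_{f_n}(s,\boldsymbol y)\,ds\to\int_0^t b_f(s,\boldsymbol y)\,ds$ for each $t$, and uniformly in $t$ by the uniform (in $n$) Hölder bound. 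Hence $\Xi(\boldsymbol y)=X_0(\boldsymbol y)$. For $\Psi$ one passes to the limit in the integral equation $\phi_t^n(x)=x+\int_0^t b_{f_n}(s,\phi_s^n(x))\,ds+\sqrt{\sigma_n}\int_0^t M(ds,\phi_s^n(x))$. The stochastic term vanishes in $C^{0,\gamma}$ by Lemma \ref{lem:gamma-cont-discrete}; in the drift term one uses the almost sure uniform convergence $\phi_s^n(x)\to\Psi_s(x)$ together with the $C^{k,\alpha}$-regularity in space of the $X^{(l)}$ and the weak $L^2$ convergence $f_n\rightharpoonup f$. The natural way is to split $b_{f_n}(s,\phi_s^n(x))-b_f(s,\Psi_s(x))$ into $b_{f_n}(s,\phi_s^n(x))-b_{f_n}(s,\Psi_s(x))$, bounded using the uniform Lipschitz bound $\sum_l|f_n^l(s)|\,\mathrm{Lip}(X^{(l)}(s,\cdot))\le C\|f_n(s)\|_{l_2}\big(\sum_l\mathrm{Lip}(X^{(l)}(s,\cdot))^2\big)^{1/2}$, which is in $L^2(0,T)$ uniformly in $n$, times $\sup_s|\phi_s^n(x)-\Psi_s(x)|\to0$; and $b_{f_n}(s,\Psi_s(x))-b_f(s,\Psi_s(x))$, which tends to zero after time-integration by the weak convergence $f_n\rightharpoonup f$ tested against the fixed $L^2(0,T;l_2)$ element $s\mapsto(X^{(l)}(s,\Psi_s(x)))_l$. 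Thus $\Psi(x)$ solves \eqref{eq:phi-0-f}, and by uniqueness of that ODE $\Psi(x)=\phi^{0,f}(x)=\phi^0(x)$.

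Since the limit is deterministic given $f$ and $f$ is itself only determined in distribution, the precise statement is that the limiting measure is the law of $(\phi^0(\boldsymbol x),X_0(\boldsymbol y))$; as this limit is the same for every subsequence, the whole sequence $\Pp_{(\boldsymbol{x},\boldsymbol{y})}^n$ converges weakly to $\Pp_{(\boldsymbol{x},\boldsymbol{y})}^0$, which is exactly convergence as $\gamma$-diffusions. Combining with Lemma \ref{lem:tightness} and Theorem \ref{thm:Equiv-weak-diff} then completes the proof of Theorem \ref{thm:c-k-gamma-convergence}.

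I expect the main obstacle to be the passage to the limit in the drift term $\int_0^t b_{f_n}(s,\phi_s^n(x))\,ds$, where the argument of $b_{f_n}$ is itself converging: the weak-in-time convergence of $f_n$ does not survive composition with a varying path, so one genuinely needs the split above and the uniform-in-$n$ $L^2(0,T)$ control of the Lipschitz seminorms $\sum_l|f_n^l(s)|\,\mathrm{Lip}(X^{(l)}(s,\cdot))$, which in turn rests on \eqref{eq:est-X-sqare-a} and $k\ge4$. A secondary technical point is the careful bookkeeping of Skorokhod representations (once for $f_n\to f$, once for the tight family of point motions) and the observation that it suffices to identify subsequential limits, since the target law does not depend on the subsequence. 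Everything else — tightness in the Hölder-in-time topology, vanishing of the martingale parts — is already packaged in Lemmas \ref{lem:tightness-fixed-x} and \ref{lem:gamma-cont-discrete} and Theorems \ref{thm:Kolmogorov-tightness} and \ref{thm:Equiv-weak-diff}.
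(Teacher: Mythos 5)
Your argument is correct and follows essentially the same route as the paper: tightness of the point motions from Lemma \ref{lem:tightness-fixed-x}, vanishing of the $\sqrt{\sigma_n}$ martingale part, and identification of every subsequential limit as the solution of the controlled ODE \eqref{eq:phi-0-f}, whose law does not depend on the subsequence. The only difference is that where you prove the joint limit passage in the drift term by hand (your splitting of $b_{f_n}(s,\phi^n_s)-b_f(s,\Psi_s)$ using the uniform $L^2$ Lipschitz control and weak convergence $f_n\rightharpoonup f$), the paper simply cites this continuity of $(\xi,v)\mapsto\int_0^{t}b_v(s,\xi_s)\,ds$ as Proposition 4.6 of \cite{budhiraja2008large}, so your write-up is a self-contained version of the same proof.
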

\begin{proof}
By Lemma \ref{lem:tightness-fixed-x}, the sequence $\left(\phi^n_\cdot(x),X_n(\cdot,x)\right)$
has a weak limit $(\bar\phi,\bar X)$. As shown in \cite{budhiraja2008large}, Prop. 4.6, the mapping 
\[
C([0,T];\Rr^{d})\times S_{N}[l_{2}]\rightarrow\Rr^{d}\,,\qquad(\xi,v)\mapsto\int_{0}^{t}b_{v}(s,\xi_{s})\, ds\]
is continuous. Thus, in the sense of $\hat{\Pp}_{k-1}^{n}$
(i.e. in sense of $\hat{W}_{k-1}\times W_{k-1}$) any weak limit point
$(\bar{\phi},\bar{X},\bar{f})$ of the sequence $(\phi^{n},X_{n},f_{n})$
satisfies for fixed $(t,x)$:\[
\bar{X}(x,t)=\int_{0}^{t}b_{\bar{f}}(s,x)\, ds\,,\qquad\bar{\phi}_{t}(x)=x+\int_{0}^{t}b_{\bar{f}}(s,\bar{\phi}_{s})\, ds\,\qquad\mbox{a.s..}\]

\end{proof}

\section{\label{sec:section-5}Large deviation principle for the stochastic
Allen-Cahn equation}

We will apply the large deviation principle for stochastic flows 
in the form of Theorem \ref{thm:LDP-Flow-2} to show a large deviation principle for the stochstic Allen-Cahn equation 
\eqref{eq:main-EQ-1}-\eqref{eq:main-EQ-2}.

In view of Theorem \ref{pro:koenig}, \eqref{eq:main-EQ-1}-\eqref{eq:main-EQ-2} and equation \eqref{eq:def-stratonovich-flow} it remains to show that
\begin{enumerate}
\item The mapping $A:\,\,\varphi\mapsto u$, where 
$u$ denotes the solution to 
\begin{equation}
\begin{aligned}u(t,x) & =u_{0}(x)+\int_{0}^{t}\left(\Delta u-W'(u)\right)+\int_{0}^{t}\nabla u(s,x)\circ d\varphi(s,\varphi^{-1}(s,x))\,,\\
\nabla u\cdot\nu_{\U} & =0\qquad\mbox{on }(0,T)\times\partial\U\end{aligned}
\label{eq:deterministic}\end{equation}
is well defined and continuous in an appropriate sense.
\item The good rate function $\tilde{I}(u)=\inf\left\{ I_{W}^{\ast}(\varphi)\,:\, u=A(\varphi)\right\} $
for the sequence $\left\{ u_{\sigma}\right\} _{\sigma>0}$ can be
written as in \eqref{eq:rate-func-AC}.
\end{enumerate}

In what follows, we assume that $\varphi\in W_{2,id}^{\alpha}(\Q)$ is a stochastic flow
and consider the SPDE \eqref{eq:main-EQ-1}--\eqref{eq:main-EQ-2} in the form \eqref{eq:deterministic}.
Supposing $\varphi$ had enough regularity in time, 
equation \eqref{eq:deterministic} would be equivalent to 
\[
\partial_{t}u+\nabla u\cdot\partial_{t}\varphi_{0,t}(\cdot)|_{\varphi_{0,t}^{-1}}-\Delta u+W'(u)=0\,,
\]
and we could use standard methods in partial differential equations.
However, as $\varphi_{0,t}$ is only $C^{0,\alpha}$ in time, $\alpha<\frac{1}{2}$,
we cannot interprete \eqref{eq:deterministic} in this form. 

Instead, we follow \cite{RoeWe13} and use that $u$ is a solution
of \eqref{eq:deterministic} if and only if the function $w$, given
by the transformation $w(t,x)=u(t,\varphi_{t}^{-1}(x))$ is a solution
of the following PDE:\begin{equation}
\partial_{t}w-R_{\varphi}:D^{2}w-S_{\varphi}\cdot\nabla w-w+w^{3}=0\,,\label{eq:AC-transform}\end{equation}
with initial condition $w(0,\cdot)=w_{0}(\cdot)=u_{0}(\cdot)$ and
boundary condition $\nabla w\cdot\nu_{\U}=\left(D\varphi\nabla u\right)\cdot\nu_{\U}=\nabla u\cdot\nu_{\U}=0$
and where the coefficients are given by
\begin{eqnarray*}
R_{\varphi}^{ij} & = & \sum_{k}\partial_{k}(\varphi_{0,t}^{-1})^{i}\circ\varphi_{0,t}\,\partial_{k}(\varphi_{0,t}^{-1})^{j}\circ\varphi_{0,t}\,,\\
S_{\varphi}^{i} & = & \sum_{k}\partial_{k}^{2}(\varphi_{0,t}^{-1})^{i}\circ\varphi_{0,t}\,.
\end{eqnarray*}
We find that $R_{\varphi}^{ij}$ is uniformly positive definit, with
$R_{\varphi}^{ij}\in C^{0,\alpha}([0,T];C^{2,2\alpha}(\ooU))$ and $S_{\varphi}^{i}\in C^{0,\alpha}([0,T];C^{1,2\alpha}(\ooU))$.
Note that as $\varphi(t,\cdot)|_{\Rr^{n}\backslash\U}\equiv Id$ for
all $t$, we find $R|_{\partial\U}=Id$ and $S|_{\partial\U}=0$.
Unique solvability of this problem in $C^{0,\alpha}(0,T;C^{3,\alpha}(\ooU))$
was shown in the proof of Theorem 4.1 in \cite{RoeWe13}.

\subsection{\label{sub:A-continuity-result}A continuity result}

In this subsection, we assume that $\varphi\in W_{2,id}^{\alpha}(\Q)$ is a deterministic function and that for given $\varphi$ the function $w$ is a solution to \eqref{eq:AC-transform}, which exists by \cite{RoeWe13}. It is the aim of this subsection to show that the mapping 
\begin{align*}
\mathcal{A}:\,\, W_{2,id}^{\alpha}(\Q)&\rightarrow H^{\frac{2+\beta}{2},2+\beta}(\Q)\\
\varphi&\mapsto w\,,\mbox{ where }w\mbox{ solves }\eqref{eq:AC-transform}\text{ with the above initial and boundary data}
\end{align*}
is continuous for any $\beta<\alpha$. This will imply continuity
of the mapping 
\begin{align}
\mathcal{B}:\,\, W_{2,id}^{\alpha}(\Q)&\rightarrow C([0,T];C^{2,\beta}(\ooU))\cap C^{0,\beta}([0,T];C^{1,\beta}(\ooU))\\
\varphi&\mapsto u(t,x):=\mathcal{A}(\varphi)(t,\varphi_{t}(x))=w(t,\varphi_{t}(x))\,.
\end{align}

To this aim, let $\varphi_{n}\rightarrow\varphi$ in $W_{2,id}^{\alpha}(\Q)$
and let $w:=\mathcal{A}(\varphi)$, $w_{n}:=\mathcal{A}(\varphi_{n})$,
as well as $R^{ij}:=R_{\varphi}^{ij}$, $S^{i}:=S_{\varphi}^{i}$,
$R_{n}^{ij}:=R_{\varphi_{n}}^{ij}$ and $S_{n}^{i}:=S_{\varphi_{n}}^{i}$. 
\begin{lem}
\label{lem:bound-w}We find $\norm{w_n}_{\infty}\leq\max\left\{ 1,\norm{u_{0}}_{\infty}\right\} $
and $w_{n}$ admits a uniform bound of the form \begin{equation}
\left|w_{n}\right|_{\Q,2+\alpha}\leq C\,.\label{eq:est-w-n}\end{equation}
\end{lem}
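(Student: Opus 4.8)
The $L^\infty$-bound is a straightforward consequence of the maximum principle applied to the semilinear parabolic equation \eqref{eq:AC-transform}. The plan is to test the equation against $(w_n - K)^+$ with $K = \max\{1, \norm{u_0}_\infty\}$, or equivalently to argue by contradiction at an interior maximum: at a point where $w_n$ attains a value $>1$ one has $\partial_t w_n \geq 0$, $\nabla w_n = 0$, and $D^2 w_n \leq 0$, so by uniform positive definiteness of $R_{\varphi_n}$ the elliptic term $R_{\varphi_n}:D^2 w_n \leq 0$; hence $\partial_t w_n = R_{\varphi_n}:D^2 w_n + S_{\varphi_n}\cdot\nabla w_n + w_n - w_n^3 \leq w_n - w_n^3 < 0$ since $w_n>1$, a contradiction (the boundary contributes nothing because $\nabla w_n\cdot\nu_\U = 0$ on $\partial\U$ and $u_0$ takes values in $[-1,1]$ if $\norm{u_0}_\infty\le1$, or one simply uses $K=\norm{u_0}_\infty$ otherwise). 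The symmetric argument at an interior minimum gives $w_n \geq -K$, so $\norm{w_n}_\infty \le K$. The only point requiring care is the regularity needed to justify the pointwise maximum-principle argument, but since $w_n\in C^{0,\alpha}([0,T];C^{3,\alpha}(\ooU))$ by the existence theory of \cite{RoeWe13}, $w_n$ is classical and this is immediate.

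For the Schauder-type estimate \eqref{eq:est-w-n} the plan is to freeze the nonlinearity and apply the classical linear parabolic Schauder estimates (e.g.\ from Ladyzhenskaya--Solonnikov--Ural'tseva) to \eqref{eq:AC-transform} viewed as a linear equation $\partial_t w_n - R_n : D^2 w_n - S_n\cdot\nabla w_n = w_n - w_n^3 =: g_n$ with oblique (here: conormal/Neumann) boundary condition $\nabla w_n\cdot\nu_\U = 0$ and initial datum $u_0$. The coefficients $R_n^{ij}\in C^{0,\alpha}([0,T];C^{2,2\alpha}(\ooU))$ and $S_n^i\in C^{0,\alpha}([0,T];C^{1,2\alpha}(\ooU))$ embed into the parabolic Hölder class $H^{\alpha/2,\alpha}(\Q)$ by Remark \ref{rem:hoelder-imbeddings}; crucially, since $\varphi_n\to\varphi$ in $W_{2,id}^\alpha(\Q)$ the sequences $(R_n)$, $(S_n)$ are \emph{uniformly} bounded in these norms, and uniform ellipticity of $R_n$ holds with a constant independent of $n$ (again because convergence in $W_{2,id}^\alpha$ controls $D\varphi_n$ and hence $R_n$ uniformly, staying away from degeneracy). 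The right-hand side $g_n = w_n - w_n^3$ is uniformly bounded in $L^\infty(\Q)$ by the first part, hence certainly in $H^{\alpha/2,\alpha}(\Q)$ once one has a first round of interior/boundary Hölder regularity; bootstrapping once more with $g_n\in H^{\alpha/2,\alpha}$ gives $w_n\in H^{1+\alpha/2,2+\alpha}(\Q) = H^{(2+\alpha)/2,2+\alpha}(\Q)$ with a bound depending only on $\norm{u_0}_{C^{2+\alpha}}$, the uniform ellipticity constant, and the uniform coefficient norms — all independent of $n$.

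The main obstacle is bookkeeping of which constants are uniform in $n$: one must check that the ellipticity constant and the Hölder norms of $R_n, S_n$ are controlled purely by $\norm{\varphi_n}_{W_{2,id}^\alpha(\Q)}$, which is bounded since $\varphi_n\to\varphi$ converges. Concretely, $R_n$ and $S_n$ are built from $D(\varphi_{n,t}^{-1})\circ\varphi_{n,t}$ and $D^2(\varphi_{n,t}^{-1})\circ\varphi_{n,t}$, so one needs that the map $\varphi\mapsto\varphi^{-1}$ (and its first two space derivatives, composed back with $\varphi$) is bounded on bounded subsets of $G_{id}^{2}$ with $C^{0,\alpha}$-in-time dependence — this is the kind of estimate already implicit in \cite{RoeWe13}, and combined with the standard parabolic Schauder theory it closes the argument. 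A secondary technical point is the compatibility of the Neumann boundary condition with the initial datum (needed for the full $H^{(2+\alpha)/2,2+\alpha}$ regularity up to $t=0$), which holds because $R|_{\partial\U} = \mathrm{Id}$, $S|_{\partial\U}=0$ and $u_0$ is smooth with $\nabla u_0\cdot\nu_\U = 0$ by assumption on the data.
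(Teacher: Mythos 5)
Your proposal is correct in outline, and the $L^\infty$ part is essentially the paper's argument (the paper tests with $(w_n-c)^+$, $c>\max\{1,\norm{u_0}_\infty\}$, and uses Gronwall; your pointwise maximum-principle variant is an equivalent alternative, legitimate since $w_n$ is classical). For the Schauder bound \eqref{eq:est-w-n}, however, you take a genuinely different route. You keep the $n$-dependent coefficients $R_n,S_n$, argue that their ellipticity constants and parabolic H\"older norms are uniform in $n$ (because $\varphi_n\to\varphi$ in $W_{2,id}^{\alpha}(\Q)$), and then bootstrap: a first round of low-order H\"older regularity makes the cubic right-hand side $w_n-w_n^3$ uniformly H\"older, after which the linear Schauder estimate gives the uniform $H^{\frac{2+\alpha}{2},2+\alpha}$ bound. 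The paper instead freezes the \emph{limit} coefficients $R,S$, writes $\partial_t w_n-R:D^2w_n-S\cdot\nabla w_n=f_n$ with $f_n=(R-R_n):D^2w_n+(S-S_n)\cdot\nabla w_n-(w_n^3-w_n)$, applies Schauder for this single fixed operator (so the constant is trivially $n$-independent), controls $|f_n|_{\Q,\alpha}$ by $|\varphi_n-\varphi|_{W_{2,id}^{\alpha}}\,(|D^2w_n|_{\Q,\alpha}+|\nabla w_n|_{\Q,\alpha})$ plus a lower-order term, and absorbs both via smallness of $|\varphi_n-\varphi|$ (for $n$ large) and an Ehrling interpolation $|w_n|_{\Q,\alpha}\le\delta|w_n|_{\Q,2+\alpha}+C_\delta\norm{w_n}_\infty$; the finitely many remaining $n$ are handled trivially. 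The paper's device avoids exactly the two points where your argument needs extra care: (i) you must invoke that the Schauder constant in \cite{ladyzenskaya1968quasilinear} depends only on the ellipticity constant and the coefficient H\"older norms, and justify uniformity of these for $R_n,S_n$ --- note that boundedness of $\varphi\mapsto\varphi^{-1}$ on bounded subsets of $G_{id}^{2}$ is \emph{not} true in general (the Jacobian may degenerate); what saves you is precisely the convergence $\varphi_n\to\varphi$ to a fixed diffeomorphism flow, which keeps $\det D\varphi_{n,t}$ uniformly away from zero for large $n$; and (ii) your ``first round of H\"older regularity'' needs a named mechanism uniform in $n$ (Krylov--Safonov/De Giorgi up to the conormal boundary, or $L^p$ theory), or can be replaced by the same interpolation/absorption trick the paper uses for the cubic term, which is cleaner. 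With these points made precise your route closes and is a valid alternative; the paper's freezing trick simply buys a shorter path by never requiring uniform Schauder theory for varying coefficients nor a preliminary regularity step.
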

\begin{proof}
As $w_{n}$ solves \[
\partial_{t}w_{n}-R_{n}:D^{2}w_{n}-S_{n}\cdot\nabla w_{n}+\left(w_{n}^{3}-w_{n}\right)=0\,,\]
we multiply the last equation by $(w_{n}-c)^{+}$ for some $c>\max\left\{ 1,\norm{u_{0}}_{\infty}\right\} $
and integrate over $\U$ to find \begin{multline*}
\frac{1}{2}\frac{d}{dt}\int_{\U}\left(\left(w_{n}-c\right)^{+}\right)^{2}-\int_{\U}\left(w_{n}-c\right)^{+}R_{n}:D^{2}w_{n}-\int_{\U}\left(w_{n}-c\right)^{+}S_{n}\cdot\nabla\left(w_{n}-c\right)^{+}\\
+\int_{\U}\left(w_{n}^{3}-w_{n}\right)\left(w_{n}-c\right)^{+}=0\,.\end{multline*}
Since $c\geq1$, we find $\left(w_{n}^{3}-w_{n}\right)\left(w_{n}-c\right)^{+}\geq0$.
Thus, integration by parts in the second integral on the left hand
side with the boundary condition $\nabla w_{n}\cdot\nu=(R_{n}\nabla w)\cdot\nu=0$
yields\begin{multline*}
\frac{1}{2}\frac{d}{dt}\int_{\U}\left(\left(w_{n}-c\right)^{+}\right)^{2}+\int_{\U}\nabla\left(w_{n}-c\right)^{+}R_{n}:\nabla\left(w_{n}-c\right)^{+}\\
\leq\int_{\U}\left(w_{n}-c\right)^{+}\left(S_{n}-\diver R_{n}\right)\cdot\nabla\left(w_{n}-c\right)^{+}\,.\end{multline*}
Ellipticity of $R_{n}$ and essential boundedness of $R_{n}$, $S_n$ and
$\diver R_{n}$ yield\[
\frac{1}{2}\frac{d}{dt}\int_{\U}\left(\left(w_{n}-c\right)^{+}\right)^{2}+C_{n}\int_{\U}\left|\nabla\left(w_{n}-c\right)^{+}\right|^{2}\leq\int_{\U}\left(\left(w_{n}-c\right)^{+}\right)^2\,.\]
Gronwall's inequality then yields
\[
\frac{1}{2}\sup_{t\in[0,T]}\int_{\U}\left(\left(w_{n}-c\right)^{+}(t)\right)^{2}+\int_{0}^{T}C_{n}\int_{\U}\left|\nabla\left(w_{n}-c\right)^{+}\right|^{2}\leq\int_{0}^{T}\int_{\U}\left(\left(w_{n}-c\right)^{+}\right)^{2}\,.
\]
and thus $w_{n}\leq c$ (see also \cite{evans1998partial} Chapter
7). Similarly, we get $-c\leq w_{n}$. 

We only have to show \eqref{eq:est-w-n} for $n>n_{0}$ with $n_{0}\in\Nn$.
Note that $w_{n}$ equally solves\[
\partial_{t}w_{n}-R:D^{2}w_{n}-S\cdot\nabla w_{n}=f_{n}\,,\]
where \[
f_{n}=(R-R_{n}):D^{2}w_{n}+(S-S_{n})\cdot\nabla w_{n}-\left(w_{n}^{3}-w_{n}\right)\,.\]
Thus, by Schauder estimates (\cite{ladyzenskaya1968quasilinear} Theorem
IV.5.3) we find \begin{equation}
c\left|w_{n}\right|_{\Q,2+\alpha}\leq(\left|f_{n}\right|_{\Q,\alpha}+\left|u_{0}\right|_{\U,2+\alpha})\label{eq:schauder-est}\end{equation}
with \[
\left|f_{n}\right|_{\Q,\alpha}\leq C\left(\left|\varphi_{n}-\varphi\right|_{W_{2,id}^{\alpha}}\left(\left|D^{2}w_{n}\right|_{\Q,\alpha}+\left|\nabla w_{n}\right|_{\Q,\alpha}\right)+\left(\norm{w_{n}}_{\infty}^{2}+1\right)\left|w_{n}\right|_{\Q,\alpha}\right)\]
By Ehrlings lemma we find $\left|w_{n}\right|_{\Q,\alpha}\leq\delta\left|w_{n}\right|_{\Q,2+\alpha}+C_{\delta}\norm{w_{n}}_{\infty}$ and deduce 
\[
\left|f_{n}\right|_{\Q,\alpha}\leq C\left(\left|\varphi_{n}-\varphi\right|_{W_{2,id}^{\alpha}}\left|w_{n}\right|_{\Q,2+\alpha}+\left(\norm{w_{n}}_{\infty}^{2}+1\right)\left(\delta\left|w_{n}\right|_{\Q,2+\alpha}+C_{\delta}\norm{w_{n}}_{\infty}\right)\right)\,.
\]
We use this estimate and $\norm{w_{n}}_{\infty}^{2}\leq c^{2}$ in
\eqref{eq:schauder-est} to obtain\[
\left|w_{n}\right|_{\Q,2+\alpha}\leq C\left(\left(\left|\varphi_{n}-\varphi\right|_{W_{2,id}^{\alpha}}+\delta\left(c^{2}+1\right)\right)\left|w_{n}\right|_{\Q,2+\alpha}+c\left(c^{2}+1\right)C_{\delta}+\left|u_{0}\right|_{\U,2+\alpha}\right)\,.\]
Since $\varphi_{n}\rightarrow\varphi$ in $W_{2,id}^{\alpha}(\Q)$,
we can choose $n$ large enough and $\delta$ small enough such that
$\left(\left|\varphi_{n}-\varphi\right|_{W_{2,id}^{\alpha}}+\delta\left(c^{2}+1\right)\right)<\frac{1}{2C}$.
Absorbing $\frac{1}{2}\left|w_{n}\right|_{\Q,2+\alpha}$ on the left
hand side, we finally obtain the
desired estimate on $\left|w_{n}\right|_{\Q,2+\alpha}$ for all $n>N_{0}$
with $N_{0}\in\Nn$ fixed. Thus, we obtain \eqref{eq:est-w-n} for
all $n$.
\end{proof}
Using this boundedness result, we get continuity of the mapping $\mathcal{A}$:
\begin{lem}
\label{lem:continuity}Let $\varphi_{n}\rightarrow\varphi$ in $W_{2,id}^{\alpha}(\U)$
and let $w:=\mathcal{A}(\varphi)$, $w_{n}:=\mathcal{A}(\varphi_{n})$.
We find $w_{n}\rightarrow w$ in $H^{\frac{2+\beta}{2},2+\beta}(\Q)$
for all $\beta<\alpha$.\end{lem}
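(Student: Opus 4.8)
The plan is to exploit the uniform bound \eqref{eq:est-w-n} from Lemma \ref{lem:bound-w} together with a compactness-plus-uniqueness argument, and then to upgrade to convergence in the strong parabolic Hölder norm by a quantitative estimate on the difference $w_n - w$. First I would record that by Lemma \ref{lem:bound-w} the sequence $(w_n)_n$ is bounded in $H^{\frac{2+\alpha}{2},2+\alpha}(\Q)$; since $\beta<\alpha$ and $\U$ is bounded with smooth boundary, the embedding $H^{\frac{2+\alpha}{2},2+\alpha}(\Q)\hookrightarrow H^{\frac{2+\beta}{2},2+\beta}(\Q)$ is compact (this is the standard Arzel\`a--Ascoli-type compactness for parabolic Hölder spaces). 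Hence every subsequence of $(w_n)_n$ has a further subsequence converging in $H^{\frac{2+\beta}{2},2+\beta}(\Q)$ to some limit $\tilde w$.

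Next I would identify the limit. Since $\varphi_n\to\varphi$ in $W_{2,id}^{\alpha}(\Q)$, the coefficients converge: $R_n\to R$ and $S_n\to S$ (uniformly, indeed in $C^{0,\beta}([0,T];C^{1,2\beta}(\ooU))$), because $R_\varphi$ and $S_\varphi$ are built from $\varphi$, $D\varphi$, $\varphi^{-1}$, $D\varphi^{-1}$, $D^2\varphi^{-1}$ and these depend continuously on $\varphi\in W_{2,id}^{\alpha}$ (inversion and composition are continuous on the relevant Hölder spaces, and ellipticity of $R$ is preserved in the limit). Passing to the limit in the equation $\partial_t w_n - R_n:D^2 w_n - S_n\cdot\nabla w_n - w_n + w_n^3 = 0$, which is legitimate because the convergence in $H^{\frac{2+\beta}{2},2+\beta}(\Q)$ controls $\partial_t w_n$, $D^2 w_n$, $\nabla w_n$ uniformly, and because the nonlinear term $w_n^3$ converges uniformly, we see that $\tilde w$ solves \eqref{eq:AC-transform} with the same initial and boundary data. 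By the uniqueness of solutions to \eqref{eq:AC-transform} in $C^{0,\alpha}(0,T;C^{3,\alpha}(\ooU))$ established in \cite{RoeWe13}, $\tilde w = w$. Since every subsequence has a further subsequence converging to the same limit $w$, the full sequence converges: $w_n\to w$ in $H^{\frac{2+\beta}{2},2+\beta}(\Q)$.

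An alternative, more direct route — which I would sketch as the main line if one prefers to avoid the subsequence argument — is to estimate $v_n := w_n - w$ directly. It solves $\partial_t v_n - R:D^2 v_n - S\cdot\nabla v_n = g_n$, where the right-hand side $g_n = (R - R_n):D^2 w_n + (S - S_n)\cdot\nabla w_n - (w_n^3 - w^3)$; using $w_n^3 - w^3 = (w_n^2 + w_n w + w^2) v_n$ and the $L^\infty$-bounds from Lemma \ref{lem:bound-w}, Schauder estimates (\cite{ladyzenskaya1968quasilinear}, Theorem IV.5.3) with the \emph{fixed} operator $\partial_t - R:D^2 - S\cdot\nabla$ give $|v_n|_{\Q,2+\beta} \leq C\big(|R-R_n|_{\Q,\beta} + |S-S_n|_{\Q,\beta}\big)|w_n|_{\Q,2+\alpha} + C|v_n|_{\Q,\beta}$, and interpolating $|v_n|_{\Q,\beta}$ against $|v_n|_{\Q,2+\beta}$ and $\|v_n\|_\infty$ via Ehrling's lemma lets one absorb the last term once one knows $\|v_n\|_\infty\to 0$ (which follows from the energy argument as in Lemma \ref{lem:bound-w}, applied to the difference, or from the compactness step above).

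The main obstacle is the justification that $R_n\to R$ and $S_n\to S$ in a Hölder norm strong enough to control $g_n$, i.e.\ the continuity of the maps $\varphi\mapsto R_\varphi$, $\varphi\mapsto S_\varphi$ from $W_{2,id}^{\alpha}(\Q)$ into $C^{0,\beta}([0,T];C^{1,2\beta})$ (for $S$) and $C^{0,\beta}([0,T];C^{2,2\beta})$ (for $R$): this requires showing that $\varphi\mapsto\varphi^{-1}$ and the composition $(\psi,\varphi)\mapsto\psi\circ\varphi$ are continuous on these parabolic Hölder spaces, together with the loss of one derivative forced by $S$ involving $D^2\varphi^{-1}$ — which is exactly why the target regularity is $\beta<\alpha$ rather than $\alpha$, and why the hypothesis $\varphi\in W_{2,id}^{\alpha}$ carries two spatial derivatives. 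Everything else is a routine combination of Lemma \ref{lem:bound-w}, Schauder theory, Ehrling's lemma, and the uniqueness statement from \cite{RoeWe13}.
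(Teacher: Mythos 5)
Your main line (uniform bound from Lemma \ref{lem:bound-w}, compact embedding $H^{\frac{2+\alpha}{2},2+\alpha}(\Q)\hookrightarrow H^{\frac{2+\beta}{2},2+\beta}(\Q)$, subsequence limits, identification of the limit, convergence of the full sequence) is correct, but it identifies the limit differently from the paper. The paper does not pass to the limit in the equation and invoke uniqueness; instead it writes the equation satisfied by $\tilde w_n=w-w_n$, with right-hand side $f_n=(R_n-R):D^2w+(S_n-S)\cdot\nabla w$, tests with $\tilde w_n$, and uses ellipticity, the sign of $w^2+ww_n+w_n^2$, the uniform convergence $R_n\to R$, $S_n\to S$ and Gronwall to get $\sup_t\norm{\tilde w_n(t)}_{L^2(\U)}\to 0$, hence $\tilde w_n\to 0$ a.e.; combined with the compactness from \eqref{eq:est-w-n} this forces $\tilde w_n\to 0$ in $H^{\frac{2+\beta}{2},2+\beta}(\Q)$. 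This quantitative route only needs $R_n\to R$, $S_n\to S$ uniformly on $\Q$ (not in a Hölder norm, which your ``alternative route'' via Schauder on the difference would require), and it avoids appealing to an external uniqueness statement. Your soft route works, but with one caveat you should repair: the subsequential limit $\tilde w$ is only known to lie in $H^{\frac{2+\alpha}{2},2+\alpha}(\Q)$, whereas the uniqueness quoted from \cite{RoeWe13} is stated in $C^{0,\alpha}([0,T];C^{3,\alpha}(\ooU))$, a class $\tilde w$ need not belong to; you should instead prove uniqueness of bounded classical ($H^{\frac{2+\beta}{2},2+\beta}$) solutions of \eqref{eq:AC-transform} directly, which follows from exactly the energy/Gronwall computation the paper performs on the difference. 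With that fix, your argument and the paper's are of comparable length; the paper's buys weaker hypotheses on the coefficient convergence and no uniqueness input, yours buys a cleaner ``compactness plus uniqueness'' structure.
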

\begin{proof}
We define $\tilde{w}_{n}:=w-w_{n}$ and find that $\tilde{w}_{n}\in H^{\frac{2+\beta}{2},2+\beta}(\Q)$
is the unique solution (see \cite[Thm IV.5.3]{ladyzenskaya1968quasilinear})
to 
\begin{equation}
\partial_{t}\tilde{w}_{n}-R_{n}:D^{2}\tilde{w}_{n}-S_{n}\cdot\nabla\tilde{w}_{n}+\tilde{w}(w^{2}+ww_{n}+w_{n}^{2})=f_{n}+\tilde{w}_{n}\,,\label{eq:w-tilde-stark}
\end{equation}
where we assume $w_{n}$, $w$ and \[
f_{n}=(R_{n}-R):D^{2}w+(S_{n}-S)\cdot\nabla w\,\]
to be fixed. Since $R_{n}\rightarrow R$ in $C(\Q)$ and $R$ is elliptic,
the sequence $R_{n}$ is uniformly elliptic for $n$ large enough.
Equation \eqref{eq:w-tilde-stark} multiplied with $\tilde{w}_{n}$
and integrated over $\U$ gives 
\begin{multline*}
\frac12\frac{d}{dt}\int_{\U}\tilde{w}_{n}^{2}+\int_{\U}\nabla\tilde{w}_nR_n\nabla\tilde{w}_{n}+\int_{\U}\tilde{w}_n^2(w^{2}+ww_{n}+w_{n}^{2}) \\
\leq \int_{\U}f_{n}\tilde w_n+\int_{\U}\left(\tilde w_nS_n\nabla\tilde w_n-\tilde w_n\diver R_n\cdot\nabla\tilde w_n\right)\,,
\end{multline*}
where we used that $(R_n\nabla\tilde w_n)\cdot\nu_{\U}=0$ on $\partial\U$. Using positivity of $w^2+ww_n+w_n^2$ as well
as uniform boundedness of $D\, R_{n}$ and $S_{n}$ and uniform ellipticity
of $R_{n}$ we get from the last equation
\begin{equation}
\frac{d}{dt}\int_{\U}\tilde{w}_{n}^{2}+c\int_{\U}\left|\nabla\tilde{w}_{n}\right|^{2}\leq C\left(\int_{\U}f_{n}^{2}+\int_{\U}\tilde{w}_{n}^{2}\right)\,.\label{eq:lem-2-14-est-1}
\end{equation}
Since $(R_{n}-R)\rightarrow0$ and $(S_{n}-S)\rightarrow0$ uniformly in $[0,T]\times\overline U$ 
we obtain $\int_{\U}f_{n}^{2}(t)\rightarrow0$ for all $t\in[0,T]$. Since $\left|w_{n}\right|_{\Q,2+\alpha}$
is bounded by Lemma \ref{lem:bound-w}, Gronwall's inequality applied
to \eqref{eq:lem-2-14-est-1} yields \[
\sup_{0\leq t\leq T}\norm{\tilde{w}_{n}(t)}_{L^{2}(\U)}^{2}+\int_{0}^{T}\norm{\nabla\tilde{w}_{n}}_{L^{2}(\U)}^{2}\rightarrow0\quad\mbox{as}\quad n\rightarrow\infty\,,\]
and thus $\tilde w_n(t,x)\rightarrow 0$ for a.e. $(t,x)\in(0,T)\times\U$. Estimate \eqref{eq:est-w-n} yields uniform boundedness of $\tilde{w}_{n}$
in $H^{\frac{2+\beta}{2},2+\beta}$ for all $\beta<\alpha$ and by
Remark \ref{rem:hoelder-imbeddings} compactness in $H^{\frac{2+\beta}{2},2+\beta}$
for all $\beta<\alpha$. The pointwise convergence a.e. to $0$ and compactness
of $\tilde{w}_{n}$ yield convergence of $\tilde{w}_{n}$ to $0$
in $H^{\frac{2+\beta}{2},2+\beta}$ for all $\beta<\alpha$.
\end{proof}

\subsection{\label{sub:Proof-of-Theorem-4}Proof of Theorem \ref{thm:Main-1}}

We now turn to the proof of Theorem \ref{thm:Main-1}. From Proposition
\ref{thm:LDP-Flow-2}, Theorem \ref{pro:koenig} and Lemma \ref{lem:continuity},
it follows that $u_{\sigma}$ satisfies an LDP in $\sigma$ with rate
function\[
\tilde{I}(u)=\inf\left\{ I(\varphi)\,:\,\varphi\in W_{2,id}^{\alpha}(\Q),\,\,\mbox{ s.t. \eqref{eq:deterministic} holds for }u\mbox{ and }\varphi\,\right\} \]
Thus, it is sufficient to prove that \[
\hat{I}(u)=\tilde{I}(u)\,.\]
Fix $u\in C([0,T];C^{2,\beta}(\ooU))\cap C^{0,\beta}([0,T];C^{1,\beta}(\ooU))$, let $\delta>0$ and
choose $\varphi\in W^\alpha_{2,id}(\Q)$ with $I(\varphi)\leq\tilde{I}(u)+\frac{\delta}{2}$ 
such that\eqref{eq:deterministic} holds. Then, there is $f\in L^{2}(0,T;l_{2})\mbox{ s.t. }\phi^{0,f}=\varphi$
and \[
\frac{1}{2}\int_{0}^{T}\norm{f(s)}_{l_{2}}^{2}ds\leq I(\varphi)+\frac{\delta}{2}\leq\tilde{I}(u)+\delta\,.\]
Furthermore, for such $f$ holds 
\begin{equation}
u(t)=u_{0}(\cdot)+\int_{0}^{t}\nabla u\cdot\left(\sum_{l=1}^{\infty}f_{l}(t)X^{(l)}(x,t)+X^{(0)}(s,x)\right)+\int_{0}^{t}\left(\Delta u-W'(u)\right)=0\,.\label{eq:sol-to-f}
\end{equation}
Therefore $\hat{I}(u)\leq\tilde{I}(u)+\delta$ for all $\delta>0$,
and thus \[
\hat{I}(u)\leq\tilde{I}(u)\,.\]
On the other hand, for $\delta>0$ arbitrary, let $f\in L^{2}(0,T;l_{2})$ be such that \eqref{eq:sol-to-f}
holds and such that\[
\frac{1}{2}\int_{0}^{T}\norm{f(s)}_{l_{2}}^{2}ds\leq\hat{I}(u)+\frac{\delta}{2}\,.\]
Then we note that $\phi^{0,f}$, given by \eqref{eq:phi-0-f}, belongs to $C^{0,\alpha}([0,T];C^{3}(\U))$ with
$I(\phi^{0,f})\leq\frac{1}{2}\int_{0}^{T}\norm{f(s)}_{l_{2}}^{2}ds$.
Comparing \eqref{eq:sol-to-f} with \eqref{eq:deterministic} we get
\[
\tilde{I}(u)\leq I(\phi^{0,f})\leq\frac{1}{2}\int_{0}^{T}\norm{f(s)}_{l_{2}}^{2}ds\leq\hat{I}(u)+\frac{\delta}{2}\,.\]
Since $\delta>0$ was arbitrary, this shows $\hat{I}(u)=\tilde{I}(u)$
and concludes the proof.


\begin{thebibliography}{10}

\bibitem{Adams1975}
R.~Adams.
\newblock {\em Sobolev spaces, Acad}.
\newblock 1975.

\bibitem{BonicFramptonTromba1969LambdaMan}
Robert Bonic, John Frampton, and Anthony Tromba.
\newblock $\lambda$-manifolds.
\newblock {\em Journal of Functional Analysis}, 3:310--320, 1969.

\bibitem{BdMP95}
S.~Brassesco, A.~De~Masi, and E.~Presutti.
\newblock Brownian fluctuations of the interface in the {$D=1$}
  {G}inzburg-{L}andau equation with noise.
\newblock {\em Ann. Inst. H. Poincar\'e Probab. Statist.}, 31(1):81--118, 1995.

\bibitem{budhiraja2008large}
Amarjit Budhiraja, Paul Dupuis, and Vasileios Maroulas.
\newblock Large deviations for infinite dimensional stochastic dynamical
  systems.
\newblock {\em The Annals of Probability}, 36(4):1390--1420, 2008.

\bibitem{BuDM10}
Amarjit Budhiraja, Paul Dupuis, and Vasileios Maroulas.
\newblock Large deviations for stochastic flows of diffeomorphisms.
\newblock {\em Bernoulli}, 16(1):234--257, 2010.

\bibitem{MoSc90}
Piero de~Mottoni and Michelle Schatzman.
\newblock Development of interfaces in {${\bf R}\sp N$}.
\newblock {\em Proc. Roy. Soc. Edinburgh Sect. A}, 116(3-4):207--220, 1990.

\bibitem{dembo1998large}
Amir Dembo and Ofer Zeitouni.
\newblock {\em Large deviations techniques and applications}, volume~2.
\newblock Springer, 1998.

\bibitem{evans1998partial}
Lawrence~C Evans.
\newblock Partial differential equations. graduate studies in mathematics.
\newblock {\em American mathematical society}, 2, 1998.

\bibitem{EvSS92}
Lawrence~C. Evans, Halil~Mete Soner, and Panagiotis~E. Souganidis.
\newblock Phase transitions and generalized motion by mean curvature.
\newblock {\em Comm. Pure Appl. Math.}, 45(9):1097--1123, 1992.

\bibitem{FaJo82}
William~G. Faris and Giovanni Jona-Lasinio.
\newblock Large fluctuations for a nonlinear heat equation with noise.
\newblock {\em J. Phys. A}, 15(10):3025--3055, 1982.

\bibitem{Feng06}
Jin Feng.
\newblock Large deviation for diffusions and {H}amilton-{J}acobi equation in
  {H}ilbert spaces.
\newblock {\em Ann. Probab.}, 34(1):321--385, 2006.

\bibitem{Fu95}
T.~Funaki.
\newblock The scaling limit for a stochastic {PDE} and the separation of
  phases.
\newblock {\em Probab. Theory Related Fields}, 102(2):221--288, 1995.

\bibitem{Fu99}
T.~Funaki.
\newblock Singular limit for stochastic reaction-diffusion equation and
  generation of random interfaces.
\newblock {\em Acta Math. Sin. (Engl. Ser.)}, 15(3):407--438, 1999.

\bibitem{garsia1970real}
Adriano~M Garsia, E~Rodemich, and Howard Rumsey~Jr.
\newblock A real variable lemma and the continuity of paths of some gaussian
  processes.
\newblock {\em Indiana Univ. Math. J}, 20(565-578):1971, 1970.

\bibitem{gubinelli2004controlling}
Massimiliano Gubinelli.
\newblock Controlling rough paths.
\newblock {\em Journal of Functional Analysis}, 216(1):86--140, 2004.

\bibitem{HaWe14}
M.~{Hairer} and H.~{Weber}.
\newblock {Large deviations for white-noise driven, nonlinear stochastic PDEs
  in two and three dimensions}.
\newblock {\em ArXiv e-prints}, April 2014.

\bibitem{Ilma93}
Tom Ilmanen.
\newblock Convergence of the {A}llen-{C}ahn equation to {B}rakke's motion by
  mean curvature.
\newblock {\em J. Differential Geom.}, 38(2):417--461, 1993.

\bibitem{KoRT06}
Robert~V. Kohn, Maria~G. Reznikoff, and Yoshihiro Tonegawa.
\newblock Sharp-interface limit of the {A}llen-{C}ahn action functional in one
  space dimension.
\newblock {\em Calc. Var. Partial Differential Equations}, 25(4):503--534,
  2006.

\bibitem{Kuni97}
Hiroshi Kunita.
\newblock {\em Stochastic flows and stochastic differential equations},
  volume~24 of {\em Cambridge Studies in Advanced Mathematics}.
\newblock Cambridge University Press, Cambridge, 1997.
\newblock Reprint of the 1990 original.

\bibitem{ladyzenskaya1968quasilinear}
OA~Ladyzenskaya, VV~Solonnikov, and NN~Ural'tseva Linear.
\newblock Quasilinear equations of parabolic type, transl ams, vol. 23.
\newblock {\em Am. Math. Soc. Providence, RI}, 1968.

\bibitem{LioSou98}
Pierre-Louis Lions and Panagiotis~E. Souganidis.
\newblock Fully nonlinear stochastic partial differential equations: non-smooth
  equations and applications.
\newblock {\em C. R. Acad. Sci. Paris S\'er. I Math.}, 327(8):735--741, 1998.

\bibitem{Modi87}
L.~Modica.
\newblock The gradient theory of phase transitions and the minimal interface
  criterion.
\newblock {\em Arch. Rational Mech. Anal.}, 98:357--383, 1987.

\bibitem{MoMo77}
Luciano Modica and Stefano Mortola.
\newblock Un esempio di {$\Gamma$}-convergenza.
\newblock {\em Boll. Un. Mat. Ital. B (5)}, 14(1):285--299, 1977.

\bibitem{RoeWe13}
Matthias Röger and Hendrik Weber.
\newblock Tightness for a stochastic allen–cahn equation.
\newblock {\em Stochastic Partial Differential Equations: Analysis and
  Computations}, 1(1):175--203, 2013.

\bibitem{We08}
H.~Weber.
\newblock On the short time asymptotic for the stochastic {A}llen-{C}ahn
  equation.
\newblock {\em To appear in Ann. Inst. H. Poincar\'e Probab. Statist.}, 2009.

\end{thebibliography}
\def\cprime{$'$}

\end{document}